\numberwithin{equation}{section}
\newcommand \reg{\operatorname{reg}}
\newcommand\Supp{\operatorname{Supp}}
\newcommand \C{\mathcal{C}}
\newcommand \G{\mathcal{G}}
\newcommand \K{\mathbb{K}}
\newcommand \cp{\operatorname{CP}}
\newcommand \Sim{\operatorname{Sim}}
\newcommand \pd{\operatorname{pd}}
\newcommand \con{\mathcal{C}}
\newcommand \ind{\operatorname{Ind}}
\newcommand{\dd}{\operatorname{d}}
\newtheorem{theorem}{Theorem}[section]
\newtheorem{definition}[theorem]{Definition}
\newtheorem{lemma}[theorem]{Lemma}
\newtheorem{example}[theorem]{Example}
\newtheorem{obs}[theorem]{Observation}
\newtheorem{question}[theorem]{Question}
\newtheorem{remark}[theorem]{Remark}
\newtheorem{corollary}[theorem]{Corollary}
\newtheorem*{notation*}{Notation}
\begin{document}

\title[Linear Resolutions of Connected Graph Ideals and Their Powers]
{Linear Resolutions of Connected Graph Ideals and Their Powers}

\author{Arka Ghosh}
\email{arkaghosh1208@gmail.com}

\author{S Selvaraja}
\email{selvas@iitbbs.ac.in}
\address{Department of Mathematics, Indian Institute Of Technology  Bhubaneswar, Bhubaneswar, 752050, India}

\subjclass[2020]{13F55, 05E45, 13D02}

\keywords{linear resolution, regularity, co-chordal clutters, connected ideals, powers of ideals}

\begin{abstract}
For a finite simple graph $G$ and an integer $r \ge 1$, the $r$-connected ideal $I_r(G)$ is the squarefree monomial ideal generated by the vertex sets of connected induced subgraphs of size $r+1$, extending the classical edge ideal. We investigate the linearity of the minimal free resolutions of $I_r(G)$ via structural features of the associated clutter $\mathcal{C}_r(G)$.
We introduce the class of co-chordal-cactus graphs and prove that $I_r(G)$ has a linear resolution for all $r \ge 2$ whenever $G$ lies in this family. The result further extends to $(2K_2, C_4)$-free graphs and co-grid graphs. For $r=1$, we show that the edge ideal $I_1(G)$ has Castelnuovo--Mumford regularity at most $3$ for all co-chordal-cactus and co-grid graphs.
We also examine powers of connected ideals and establish that $I_r(G)^q$ has a linear resolution for every $q \ge 1$ in several natural graph families, including complements of trees with bounded degree, complete multipartite graphs, complements of cycles, graphs obtained by gluing complete graphs along cliques, and certain subclasses of split graphs.
\end{abstract}

\maketitle

\section{Introduction}

A central theme in combinatorial commutative algebra is the study of the interplay between algebraic properties of squarefree monomial ideals and the combinatorics of the discrete structures that generate them, such as simplicial complexes, clutters, and graphs. Among the most extensively investigated homological invariants is the existence of a \emph{linear resolution}. If $I\subseteq \K[x_1,\dots,x_n]$ is a homogeneous ideal generated in a single degree and all entries of the maps in its minimal free resolution are either zero or linear forms, we say that $I$ has a \emph{linear resolution}. Characterizing squarefree monomial ideals that admit such resolutions has led to a substantial body of work.
A landmark result in this direction is due to Fr\"oberg~\cite{froberg}, who completely resolved the quadratic case. For a simple graph $G$ on $V(G)=\{x_1,\dots,x_n\}$, the \emph{edge ideal} is
\[
I(G)=\big( x_i x_j \mid \{x_i,x_j\}\in E(G)\big)\subseteq \K[x_1,\dots,x_n].
\]
A graph $H$ is \emph{chordal} if every induced cycle of length at least four has a chord, and \emph{co-chordal} if its complement $\overline{H}$ is chordal.  
Fr\"oberg’s theorem states that $I(G)$ has a linear resolution if and only if $\overline{G}$ is chordal~\cite[Theorem~1]{froberg}; equivalently, $I(G)$ is linear precisely when $G$ is co-chordal. This foundational result has inspired extensive efforts to generalize the characterization to higher-degree squarefree monomial ideals and richer combinatorial settings (see, e.g.,~\cite{BYZ17,CF13,CF15,Anton21,eagon,KK06,Russ11}).

A natural higher-dimensional analogue of the edge ideal is the \emph{connected ideal}. For a graph $G$ and integer $r\ge1$, define
\[
I_r(G)=\Big(\prod_{x\in S}x \ \Big|\ |S|=r+1,\ G[S]\text{ connected}\Big),
\]
which recovers $I(G)$ when $r=1$.  
Combinatorially, $I_r(G)$ is the Stanley--Reisner ideal of the \emph{$r$-independence complex} $\ind_r(G)$, whose faces are the subsets $A\subseteq V(G)$ such that each connected component of $G[A]$ has at most $r$ vertices (recovering the usual independence complex when $r=1$). Equivalently, $I_r(G)$ is the edge ideal of the clutter $\C_r(G)$ whose edges are the connected $(r+1)$-subsets of $V(G)$.

The algebraic behavior of connected ideals has recently received considerable attention~\cite{FPSAA23,AJM24,HJ15,KRK25,DRSV24,DochEng09,GS25,TG06}, particularly regarding linear resolutions. A major result of~\cite{DRSV24} shows that if $G$ is co-chordal (that is, $\overline{G}$ is chordal), then $I_r(G)$ has a linear resolution for all $r\ge1$. Their proof combines two independent approaches: vertex-splittable ideal techniques and topological arguments via collapsibility of $\ind_r(G)$. Additional sufficient conditions for linearity are known; for example,~\cite{AJM24} proved that $I_r(G)$ has a linear resolution whenever $G$ is both gap-free and $r$-claw-free, using linear quotients.

A powerful general framework for establishing linearity of squarefree monomial ideals was developed by Bigdeli, Yazdanpour, and Zaare-Nahandi~\cite{BYZ17}, who related this property to the notion of \emph{co-chordality} in clutters. A $d$-uniform clutter $\mathcal{C}$ is called \emph{chordal} if it can be reduced to the empty clutter by successively deleting simplicial maximal subedges, and \emph{co-chordal} if its complement $\overline{\mathcal{C}}$ is chordal. Their fundamental theorem~\cite[Theorem~3.3]{BYZ17} states that if $\mathcal{C}$ is co-chordal, then the edge ideal $I(\mathcal{C})$ has a linear resolution.  
Importantly, although linearity of squarefree monomial ideals may in general depend on the characteristic of the ground field~\cite{Re76}, the result of~\cite{BYZ17} holds over \emph{every} field~$\K$; thus linearity in the co-chordal setting is characteristic-independent.

Applied to our setting, this shows that to establish a linear resolution for the connected ideal $I_r(G)$, it is enough to verify that the associated clutter $\mathcal{C}_r(G)$ is co-chordal. Determining co-chordality of a given clutter, however, is typically a highly nontrivial combinatorial problem. This difficulty motivates one of the central goals of the present work. Although recent results~\cite{DRS25} have identified certain graph classes for which $\mathcal{C}_r(G)$ is co-chordal for all $r\ge2$-often through connections with the Cohen–Macaulayness of the $r$-co-connected complex of $G$-a general combinatorial characterization remains unknown. The main challenge is to convert the recursive nature of clutter co-chordality into concrete, verifiable graph-theoretic conditions on $G$.

In this paper we address this challenge by developing new combinatorial techniques that identify broad families of graphs whose higher connected ideals admit linear resolutions. Our approach extends the co-chordal paradigm to more general graph structures while preserving strong homological behavior. More precisely, we focus on three natural classes that generalize classical co-chordal phenomena:
\begin{enumerate}
    \item[(i)] \emph{co-chordal-cactus graphs}, whose complements are obtained by gluing cycles and chordal graphs along a cactus skeleton; this family properly contains all co-chordal, co-cycle, and co-cactus graphs;
    \item[(ii)] \emph{co-grid graphs}, i.e., complements of grid graphs (Cartesian products of paths);
    \item[(iii)] \emph{$(2K_2,C_4)$-free graphs}, which forbid both $2K_2$ and $C_4$ as induced subgraphs.
\end{enumerate}

Our first main result shows that each of the graph classes introduced above guarantees co-chordality of the connected clutter.

\begin{theorem}[Theorems~\ref{thm:cycle-cochordal}, \ref{2k2-c4-free}, \ref{thm:con-cochordal}]
\label{intro-co-chordal}
Let $G$ belong to one of the following graph families:
\begin{enumerate}
    \item co-chordal-cactus graphs,
    \item $(2K_2,C_4)$-free graphs,
    \item co-grid graphs.
\end{enumerate}
Then, for every $r\ge2$, the clutter $\mathcal{C}_r(G)$ is co-chordal.  
Consequently, the ideal $I_r(G)$ has a linear resolution; equivalently,
\[
\reg(I_r(G)) = r+1 \qquad \text{for all } r\ge2,
\]
where $\reg(-)$ denotes the Castelnuovo–Mumford regularity.
\end{theorem}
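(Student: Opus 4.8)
The overall plan is to route everything through the co-chordality criterion of Bigdeli, Yazdanpour and Zaare-Nahandi. Since $I_r(G) = I(\mathcal{C}_r(G))$ is precisely the edge ideal of the $(r+1)$-uniform clutter $\mathcal{C}_r(G)$, generated entirely in degree $r+1$, \cite[Theorem~3.3]{BYZ17} guarantees linearity of the resolution (over every field) as soon as $\mathcal{C}_r(G)$ is co-chordal, i.e. as soon as the complement clutter $\overline{\mathcal{C}_r(G)}$—whose edges are exactly the $(r+1)$-subsets $S$ with $G[S]$ disconnected—is chordal. Thus the theorem reduces to the three co-chordality statements (Theorems~\ref{thm:cycle-cochordal}, \ref{2k2-c4-free}, \ref{thm:con-cochordal}), one for each family. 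The final regularity equality is then immediate: a homogeneous ideal generated in a single degree $d$ that has a linear resolution satisfies $\reg = d$, and here $d = r+1$.

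The heart of the argument is therefore to exhibit, for each family and every $r \ge 2$, a reduction of $\overline{\mathcal{C}_r(G)}$ to the empty clutter by successive deletions of simplicial maximal subedges. My plan is to produce an explicit elimination order driven by the combinatorial skeleton of $\overline{G}$. For co-chordal-cactus graphs I would decompose $\overline{G}$ along its cactus structure into cycle-blocks and chordal blocks meeting at cut vertices, locate an extremal block, and peel off an $r$-subedge $e$ supported there; the point is that the disconnecting $(r+1)$-sets through $e$ should all route their ``break'' through the same cut vertex, which is what forces $e$ to be simplicial. For co-grid graphs I would sweep the grid from a corner, using that the non-adjacencies (the edges of $\overline{G}$) are highly structured, so a lexicographic order on grid coordinates yields a simplicial order on subedges. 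For $(2K_2,C_4)$-free graphs I would exploit the fact that this class is closed under complementation together with the strong local restrictions these forbidden configurations impose, to verify the simplicial condition more directly.

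The step I expect to be the main obstacle is verifying the simplicial condition at each stage: for the chosen maximal subedge $e$ one must check that the clutter-edges through $e$ form the complete local configuration required by \cite{BYZ17}. Combinatorially this amounts to controlling how an $(r+1)$-set can fail to be connected in $G$, i.e. showing that, as $S$ ranges over the sets containing $e$, the various ways $G[S]$ splits are compatible enough to amalgamate any two such edges through a common edge containing $e$. A secondary difficulty is that after the first few deletions the intermediate clutter is no longer of the form $\overline{\mathcal{C}_r(G')}$ for any graph $G'$, so the induction cannot simply recurse on vertex-deleted subgraphs; I would instead maintain a structural invariant—a cactus or grid skeleton with certain subedges already marked as removed—strong enough to keep producing simplicial subedges until the clutter is exhausted. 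Treating $r=2$ and large $r$ uniformly, and handling the degenerate boundary cases where a block collapses to a single edge, will require the most careful bookkeeping.
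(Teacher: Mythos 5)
Your overall frame coincides with the paper's: reduce the statement to chordality of the complement clutter $\overline{\mathcal{C}_r(G)}$, invoke \cite[Theorem~3.3]{BYZ17} to get a linear resolution over every field, and read off $\reg(I_r(G))=r+1$ from linearity of an ideal generated in degree $r+1$. All of that is correct, and it is exactly how the paper assembles Corollary~\ref{main:cor} from Theorems~\ref{thm:main-cochordal}, \ref{2k2-c4-free}, and~\ref{thm:con-cochordal}. The problem is that the entire mathematical content of the statement lies in actually constructing the simplicial elimination orders for the three families, and your proposal leaves precisely that undone: you flag the simplicial verification as ``the main obstacle'' and offer intentions rather than arguments. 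As written, this is a plan for a proof, not a proof.

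Three concrete gaps. First, for co-chordal-cactus graphs your peeling is organized entirely around cut vertices of the cactus skeleton; that corresponds only to the paper's Lemma~\ref{main-lemma}, which is the easy reduction step. The real work is the base case of a single chordal block (skeleton equal to one edge, Theorem~\ref{thm:path-cochordal}), where one must characterize the disconnecting $(r+1)$-sets as joins (Lemma~\ref{chara}), set up a reduced-form join decomposition at a simplicial vertex of $\overline{G}$ (Lemma~\ref{simp-join}, Observation~\ref{obs-lemma}), and prove a decomposition lemma for clutters containing a complete join (Lemma~\ref{clutter-decomposition}); your ``extremal block'' sketch says nothing about how to exhaust the edges supported inside one chordal block, and the cut vertex gives no purchase there. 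Second, your worry that intermediate clutters are no longer of the form $\overline{\mathcal{C}_r(G')}$ is legitimate, but your proposed fix (a skeleton with subedges ``marked as removed'') is undeveloped, and it is not how the paper closes the induction: the paper arranges deletions in complete batches so that after each batch the clutter is again $\overline{\mathcal{C}_r(G')}$ for an explicit modified graph $G'$ --- the vertex-split graph in Lemma~\ref{main-lemma} and Theorem~\ref{thm:cycle-cochordal}, and a smaller grid complement in Theorem~\ref{thm:con-cochordal}. Without restoring graph form there is no induction hypothesis to apply, so this is not bookkeeping but the load-bearing idea. Third, for $(2K_2,C_4)$-free graphs, closure under complementation is far from sufficient; the paper's proof of Theorem~\ref{2k2-c4-free} hinges on the structure theorem of \cite{BHP93} (the vertex set splits as an independent set, a clique, and a possibly empty $C_5$ with prescribed adjacencies), which dictates both the elimination order and the case analysis in $r$. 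Your sketch never identifies this input, and without it the ``strong local restrictions'' you invoke do not yield an elimination sequence.
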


As a direct consequence, we recover several known results (see Corollary~\ref{cor:DRSV24}).  
For instance, it is known~\cite{Nursel} that if $G$ is $(2K_2,C_4)$-free, then $\reg(I_1(G))\le3$.  
The next theorem shows that this regularity bound holds for two much larger graph families.

\begin{theorem}[Theorems~\ref{thm:reg-co-cactus}, \ref{reg-grid}]
\label{intro-reg}
If $G$ is a co-chordal-cactus graph or a co-grid graph, then
\[
\reg(I_1(G)) \le 3.
\]
\end{theorem}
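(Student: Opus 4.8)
The plan is to bound $\reg(I_1(G))$ through the \emph{co-chordal cover number} $\cochord(G)$, the least number of co-chordal subgraphs of $G$ whose edge sets cover $E(G)$. By Woodroofe's inequality $\reg\big(\K[V]/I_1(G)\big)\le \cochord(G)$ — which is valid over every field — we get $\reg(I_1(G)) = \reg(\K[V]/I_1(G)) + 1 \le \cochord(G)+1$, so it suffices to prove $\cochord(G)\le 2$ in both families. Complementing, $\cochord(G)\le 2$ is equivalent to producing two \emph{chordal} supergraphs $H_1,H_2\supseteq \ol G$ on the common vertex set $V$ with $E(H_1)\cap E(H_2)=E(\ol G)$: then $G_i:=\ol{H_i}$ are co-chordal subgraphs of $G$ with $E(G_1)\cup E(G_2)=E(G)$. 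Concretely, I must triangulate $\ol G$ in two ways that share no newly added edge.

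For a co-chordal-cactus graph, write $\ol G=H$ as a gluing of chordal graphs and cycles along its block tree, so every block of $H$ is either chordal or a cycle. I would triangulate each cyclic block $C$ in two edge-disjoint ways, using the two fans of chords emanating from two \emph{adjacent} vertices $u,w$ of $C$: each fan produces a chordal triangulation of $C$, and the only chord they could share would join $u$ and $w$, which is a cycle edge rather than a chord, so the two chord sets are disjoint. Leaving the already-chordal blocks untouched, set $H_1$ (resp.\ $H_2$) equal to $H$ together with the first (resp.\ second) triangulating chords of every cyclic block. Adding chords internal to a block does not alter the block tree, so $H_1$ and $H_2$ share the blocks of $H$ with each cyclic block now triangulated; since a graph is chordal precisely when all its blocks are chordal, both $H_1,H_2$ are chordal. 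New chords in distinct blocks are automatically disjoint, and within each cyclic block the two chord sets were chosen disjoint, whence $E(H_1)\cap E(H_2)=E(H)=E(\ol G)$. Thus $\cochord(G)\le 2$ and $\reg(I_1(G))\le 3$.

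For a co-grid graph, $\ol G$ is a grid $P_m \,\square\, P_n$ with vertices $(i,j)$. Here I would take $H_1$ to be the grid with every unit square cut by its ``$\backslash$'' diagonal $(i,j)\!-\!(i{+}1,j{+}1)$, and $H_2$ the grid with every ``$/$'' diagonal $(i,j{+}1)\!-\!(i{+}1,j)$; the two diagonal families are disjoint and each contains all grid edges, so $E(H_1)\cap E(H_2)=E(P_m \,\square\, P_n)$. It then remains only to check that each diagonally triangulated grid is chordal, after which $\cochord(G)\le 2$ yields $\reg(I_1(G))\le 3$ as before. One verifies chordality by producing a perfect elimination ordering: the corner opposite to the chosen diagonal slope is simplicial, and one peels boundary vertices in an order adapted to the anti-diagonal levels $i+j$; the reflection $(i,j)\mapsto(i,n{+}1{-}j)$ interchanges the two slopes, so it is enough to treat one of them.

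\textbf{The main obstacle} is the grid case. Constructing two edge-disjoint triangulations is immediate, but rigorously proving that a diagonally triangulated $m\times n$ grid is chordal for \emph{all} $m,n$ takes care: no single diagonal slope makes all four corners simplicial simultaneously, and naive corner-peeling stalls after the first removal. I expect the cleanest route is an explicit perfect elimination ordering organized by the levels $i+j$ — or an induction that peels off an entire boundary row or column while controlling the induced fill-in — combined with a direct verification that every induced cycle of length $\ge 4$, in particular the boundary cycle of any rectangular subregion, inherits a chord from the diagonals. By contrast, the co-chordal-cactus case is essentially formal once the block-wise triangulation is arranged; there the only subtleties are to confirm that triangulating inside blocks preserves chordality (via the block characterization) and keeps the two completions edge-disjoint.
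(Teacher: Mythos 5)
Your overall route is genuinely different from the paper's: you invoke Woodroofe's bound $\reg\bigl(\K[x_1,\dots,x_n]/I_1(G)\bigr)\le\cochord(G)$ and try to force $\cochord(G)\le 2$ by exhibiting two chordal completions $H_1,H_2\supseteq\overline{G}$ with $E(H_1)\cap E(H_2)=E(\overline{G})$, whereas the paper (Theorems~\ref{thm:reg-co-cactus} and~\ref{reg-grid}) never covers by co-chordal subgraphs; it inducts on the skeleton size (resp.\ on $m$) using $\reg(I_1(G))\le\max\{\reg(I_1(G):x)+1,\reg(I_1(G),x)\}$, the identifications $\reg(I_1(G),x)=\reg(I_1(G\setminus x))$, $\reg(I_1(G):x)=\reg(I_1(G\setminus N_G[x]))$, and Fr\"oberg's theorem applied to $G\setminus N_G[x]$. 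The reduction itself is legitimate, but both of your constructions rest on structural claims that are false. In the cactus case, your assertion that every block of $\overline{G}$ is either chordal or a cycle does not follow from Definition~\ref{def:co-chordal-cactus}: condition~(4) only restricts \emph{pairwise} intersections of the pieces $H_e$, so pieces may be glued cyclically with all pairwise intersection vertices distinct. Take the skeleton $T=C_3$, with $H_{e_1}$ the triangle on $\{a,x,b\}$, $H_{e_2}$ the path $b\,y\,c$, and $H_{e_3}$ the path $c\,z\,a$; every condition of the definition holds, yet $\overline{G}$ is $2$-connected, hence a single block with $6$ vertices and $7$ edges containing the induced $5$-cycle on $a,b,y,c,z$ --- neither chordal nor a cycle, so your recipe (leave chordal blocks alone, double-fan cyclic blocks) is simply undefined there. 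These cyclic gluings are exactly the cases the paper's Theorems~\ref{thm:cycle-cochordal} and~\ref{thm:main-cochordal} are built to handle; the same failure already occurs with a star skeleton whose three pieces meet pairwise at three distinct vertices.

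The grid case is worse: the step you deferred as ``remaining to check'' is not delicate but false. For $n,m\ge3$ the uniformly triangulated grid is not chordal, because the six neighbours of any interior vertex induce a $C_6$. Concretely, in $P_3\,\square\,P_3$ with all ``$\backslash$'' diagonals $(i,j)$--$(i{+}1,j{+}1)$, the vertices $(2,1),(1,1),(1,2),(2,3),(3,3),(3,2)$ form a $6$-cycle (alternating grid edges and diagonals) in which no non-consecutive pair is adjacent in $H_1$: the only two candidate chords realized in the full triangulated lattice, $(2,1)$--$(1,2)$ and $(2,3)$--$(3,2)$, are ``$/$'' diagonals, which you placed in $H_2$. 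So $H_1$ (and by symmetry $H_2$) is not chordal, and your intended verification that every induced cycle ``inherits a chord from the diagonals'' cannot succeed. Note also that any pair of edge-disjoint chordal completions is forced to put complementary diagonals in every unit square (a $4$-cycle's only chords are its two diagonals), after which both completions require additional fill edges chosen disjointly from each other and from the opposite diagonal family; this can be arranged by hand for $3\times3$, but producing such a pair for all $n,m$ is a genuinely nontrivial construction that the proposal does not supply. As written, neither half of the theorem is established.
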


Together, Theorems~\ref{intro-co-chordal} and~\ref{intro-reg} reveal a striking stabilization phenomenon:  
for the graph classes considered here, the edge ideal $I_1(G)$ need not be linear, yet its regularity remains uniformly bounded by~$3$, while every higher connected ideal $I_r(G)$ with $r\ge2$ always admits a linear resolution.
Beyond the connected ideals $I_r(G)$, we also study linearity for edge ideals associated with the complement clutters.  
In particular, we prove that for two fundamental graph families—block graphs and cycles-the ideals
$I\bigl(\,\overline{\con_r(G)}\,\bigr)$
have linear resolutions for all $r\ge2$ (Theorems~\ref{block-p4} and~\ref{cycle-chordal}).  
This may be viewed as a higher-dimensional analogue of the case $r=1$, where Fröberg’s theorem characterizes precisely when $I(\overline{\con_1(G)})$ is linear.

We now turn to the behavior of powers of ideals that admit linear resolutions.  
It is well known that linearity is not in general preserved under taking powers.  
A classical counterexample, due to Sturmfels~\cite{St00}, is the squarefree monomial ideal
$I=(def,\, cef,\, cdf,\, cde,\, bef,\, bcd,\, acf,\, ade),$
which has a linear resolution, whereas $I^{2}$ does not.  
In sharp contrast, edge ideals exhibit a strong persistence phenomenon:  
Herzog, Hibi, and Zheng~\cite{HHZ} proved that if the edge ideal $I_{1}(G)$ of a graph $G$ has a linear resolution-equivalently, if $G$ is co-chordal-then every power $I_{1}(G)^{q}$ also has a linear resolution for all $q\ge1$.  
This naturally raises the question
\[
\text{If } I_{r}(G)\ \text{has a linear resolution, must}\ I_{r}(G)^{q}\ \text{have a linear resolution for all } q\ge1\ ? 
\]

We provide substantial evidence for this question by identifying broad families of graphs for which all powers of $I_r(G)$ retain linear resolutions.

\begin{theorem}[Theorems~\ref{tree-complement}, \ref{thm:linear-resolution-cases}, \ref{partially-split}]
\label{thm:powers-linear}
Let $G$ belong to one of the following graph families:
\begin{enumerate}
    \item the complement of a tree $T$ with maximum degree $\Delta(T)\le r$;
    \item complete multipartite graphs;
    \item the complement of a cycle;
    \item graphs $\Gamma_{p,m_1,\dots,m_n}$ obtained by gluing complete graphs along a clique of size at most $r$;
    \item partially split graphs.
\end{enumerate}
Then $I_r(G)^q$ has a linear resolution for every $q\ge1$.
\end{theorem}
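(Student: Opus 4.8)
The plan is to prove linearity of every power family by family, in each case reducing to the statement that $I_r(G)^q$ has linear quotients for all $q\ge 1$; this suffices, since an ideal generated in a single degree with linear quotients has a linear resolution, and here $I_r(G)^q$ is generated in degree $q(r+1)$. The argument naturally splits according to how the parameter $r$ compares with the structural bound appearing in each hypothesis.

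I would first dispatch the \emph{degenerate regime}, where the combinatorics collapses to a matroid. For complete multipartite $G=K_{n_1,\dots,n_t}$ (family (2)), a set $S$ with $|S|=r+1$ induces a connected subgraph exactly when $S$ meets at least two parts; I would check directly that these $S$ are the bases of a matroid, the only delicate instance of the exchange axiom being when $S\setminus\{x\}$ lies in a single part $V_j$, which is repaired by a vertex of the comparison basis lying outside $V_j$ (such a vertex exists and avoids $S\setminus\{x\}$ since the comparison basis meets two parts and omits $x$). Hence $I_r(G)$ is polymatroidal. The same reduction covers $\ol{C_m}$ (family (3)) once $r\ge 3$: a vertex of the linear forest $\ol{C_m}$ induces on $S$ has degree at most $2$, so it cannot dominate $S$, $C_m[S]$ is never a join, and every $(r+1)$-subset is connected; thus $I_r(\ol{C_m})$ is the squarefree Veronese ideal. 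Likewise $I_r(\ol T)$ is squarefree Veronese whenever $\Delta(T)<r$, since a disconnecting set must be the closed neighborhood of a vertex of degree exactly $r$. In all these cases I invoke the theorem that powers of polymatroidal ideals are again polymatroidal, hence have a linear resolution over every field, settling all $q$ at once.

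The real work lies in the \emph{boundary regime}: $\Delta(T)=r$ in family (1), $r=2$ in family (3), and the glued families (4) and (5), where $I_r(G)$ is provably not matroidal (for example $\ol{P_5}$ with $r=2$ violates circuit elimination, so the polymatroidal route is genuinely unavailable). Here I would argue with powers directly. The crucial structural input is an explicit list of the disconnected $(r+1)$-sets: closed neighborhoods $N_T[v]$ of degree-$r$ vertices for $\ol T$; consecutive triples for $\ol{C_m}$ with $r=2$; and, for $\Gamma_{p,m_1,\dots,m_n}$ and partially split graphs, the sets that avoid the bridging clique $C$ (respectively the clique part) yet meet two distinct outer cliques. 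Fixing a total order on the generators of $I_r(G)$ induced from an elimination order on $V(G)$ with the clique vertices ordered last, I would extend it to $I_r(G)^q$ by the sorting/lexicographic rule on products and establish linear quotients via a one-step repair lemma: if $u=g_1\cdots g_q$ and $u'=g_1'\cdots g_q'$ are products of connected $(r+1)$-sets with $u'$ earlier, some single variable dividing $u/\gcd(u,u')$ can be exchanged so as to move $u$ earlier while each factor remains a connected $(r+1)$-set. Induction on $q$, peeling off one connected factor, organizes the bookkeeping, with the base case $q=1$ giving linearity of $I_r(G)$ itself.

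The hard part will be this repair step in families (4) and (5). There, connectivity couples the small bridging clique $C$ with the outer cliques (respectively couples the clique part with the independent part), and the only mechanism available to reconnect a factor after an exchange is to route it through $C$. I expect the hypothesis $p\le r$ (and its analogue for partially split graphs) to be precisely what guarantees the repair is always available: a single clique vertex can be exchanged into any factor without disconnecting it, and verifying that this works uniformly in $q$ — so that the sorted generating set of $I_r(G)^q$ always admits the required single-variable move — is the crux. By comparison, the boundary cases of families (1) and (3) should be lighter, since the excluded sets $N_T[v]$ and the consecutive triples are sparse and pairwise far apart in the chosen order, so any product failing to lie in $I_r(G)^q$ does so only through a localized defect that the same one-step exchange removes.
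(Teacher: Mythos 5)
Your ``degenerate regime'' is handled correctly: the exchange verification for complete multipartite graphs is essentially the paper's own proof of Theorem~\ref{thm:linear-resolution-cases}(1), and your observations that $I_r(\overline{C_n})$ (for $r\ge 3$, $n\ge 5$) and $I_r(\overline{T})$ (for $\Delta(T)<r$) are squarefree Veronese ideals are correct and in fact cleaner than the paper's direct case analyses; the reduction ``polymatroidal $\Rightarrow$ all powers polymatroidal $\Rightarrow$ linear resolution'' is exactly the one used in Theorems~\ref{tree-complement} and~\ref{thm:linear-resolution-cases}. The problems are concentrated in what you call the boundary regime. First, your claim that family (4) is ``provably not matroidal'' is wrong: under the hypothesis actually needed, namely $m_i\le r$ for every $i$ (which forces every connected $(r+1)$-set to meet the core $K_p$), the paper proves in Theorem~\ref{thm:linear-resolution-cases}(3) that $I_r(\Gamma_{p,m_1,\dots,m_n})$ satisfies the basis-exchange property, so the easy polymatroidal route does apply. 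The hypothesis you single out, $p\le r$, is not the operative one; indeed, with $p\le r$ but no bound on the $m_i$ the statement is false: for $\Gamma_{1,3,3}$ and $r=2$ the two peripheral triangles are disjoint edges of $\mathcal{C}_2(G)$ forming an induced matching, so $\reg(I_2(G))\ge 5$ by the bound in \cite[Theorem~3.7]{BCDMS22} that the paper itself cites, and already $I_2(G)$ fails to have a linear resolution --- no repair lemma can exist there. So your treatment of family (4) both misidentifies the needed hypothesis and, under your reading, attempts to prove a false statement.

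Second, for the cases where the polymatroidal route genuinely fails --- family (1) with $r=2$ (i.e.\ $\overline{P_n}$) and family (5) --- your proposal stops exactly where the work begins. The ``one-step repair lemma'' you posit for products $g_1\cdots g_q$ is precisely the assertion that $I_r(G)^q$ is weakly polymatroidal with respect to a lex-type order, and proving it is the entire technical content of the paper's Theorem~\ref{tree-complement} (case $r=2$, via the auxiliary Lemma~\ref{lem:five-divisors} and a contribution-counting argument across the $q$ factors) and Theorem~\ref{partially-split} (a long case analysis over the position of the exchange variable relative to $K'$, $K\setminus K'$, and $S$). You explicitly defer this step (``I expect\dots'', ``the crux''), so no proof is given for these families. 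Two smaller points: for $\Delta(T)=r$ with $r\ge 3$ the polymatroidal route does work (Theorem~\ref{tree-complement}, case $r\ge3$), so your assertion that it is unavailable throughout the boundary regime is incorrect --- your counterexample $\overline{P_5}$ only witnesses $r=2$ --- and you would be doing the hard, unproven argument where an easy one exists; and family (3) with $r=2$ is not covered by the paper's cited theorems either (Theorem~\ref{thm:linear-resolution-cases}(2) assumes $r\ge3$), so your sketch there, besides being unproven, addresses a case the paper does not actually establish.
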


Theorem~\ref{thm:powers-linear} unifies and substantially generalizes all previously known instances of persistence of linear resolutions, extending the Herzog-Hibi-Zheng theorem from the case $r=1$ to a wide array of higher connected ideals.  
In particular, it identifies several new and natural graph families for which every power of $I_r(G)$ is linear.

\medskip

\noindent\textit{Organization of the paper.}  
Section~\ref{preliminaries} introduces notation and background from commutative algebra, graph theory, and clutter theory.  
Section~\ref{TL} develops the technical lemmas used throughout the sequel.  
Section~\ref{main} establishes that co-chordal-cactus graphs, $(2K_2,C_4)$-free graphs, and co-grid graphs all satisfy that $\mathcal{C}_r(G)$ is co-chordal for every $r\ge2$.  
Section~\ref{reg-ls} provides regularity bounds and linearity results for the corresponding ideals.  
Finally, Section~\ref{main1} proves Theorem~\ref{thm:powers-linear}, establishing linearity of all powers $I_r(G)^q$ for the graph families listed above.

\section{Preliminaries}\label{preliminaries}
In this section, we establish the basic definitions and notation required for the main results.
\subsection{Algebraic Background}

Let $R=\K[x_1,\dots,x_n]$ be the standard graded polynomial ring over a field $\K$, and let $I\subseteq R$ be a homogeneous ideal.  
The homological structure of $I$ is encoded by its minimal graded free resolution
\[
0 \longrightarrow F_p \longrightarrow F_{p-1} \longrightarrow \cdots 
\longrightarrow F_1 \longrightarrow F_0 \longrightarrow I \longrightarrow 0,
\]
where 
\[
F_i=\bigoplus_{j\in\mathbb{Z}} R(-j)^{\beta_{i,j}(I)},
\qquad
R(-j)_d = R_{d-j}.
\]
Here $p=\pd(I)\le n$ is the projective dimension and the integers $\beta_{i,j}(I)$ are the graded Betti numbers.  
The \emph{Castelnuovo--Mumford regularity} of $I$ is
\[
\reg(I)=\max\{\, j-i \mid \beta_{i,j}(I)\ne0 \,\},
\qquad 
\text{so that }\reg(R/I)=\reg(I)-1.
\]

If $I=(f_1,\dots,f_t)$ is a homogeneous ideal generated in a single degree $d$, then $I$ is said to have a \emph{linear resolution} if its minimal free resolution is $d$-linear, i.e.,
\[
0\longrightarrow R(-d-p)^{\beta_{p,p+d}(I)}
   \longrightarrow \cdots 
   \longrightarrow R(-d-1)^{\beta_{1,d+1}(I)}
   \longrightarrow R(-d)^{\beta_{0,d}(I)}
   \longrightarrow I \longrightarrow 0,
\]
equivalently $\beta_{i,j}(I)=0$ whenever $j\neq i+d$.  
In particular, $\reg(I)=d$.
A useful sufficient condition for linearity is the existence of \emph{linear quotients}.  
The ideal $I$ has linear quotients if its minimal generators admit an order
$f_1<\cdots<f_t$ such that
\[
(f_1,\dots,f_{k-1}):(f_k)
\quad\text{is generated by linear forms for all } k\ge2.
\]
If, moreover, all generators have the same degree, then $I$ has a linear resolution by \cite[Proposition~8.2.1]{Herzog'sBook}.

\subsection{Graph Theory Background}

Let $G$ be a finite simple graph without isolated vertices.  
We write $V(G)$ and $E(G)$ for its vertex and edge sets.  
For $x\in V(G)$, the \emph{degree} is 
$\deg_G(x)=|\{\,y\in V(G)\mid \{x,y\}\in E(G)\,\}|.$
A subgraph $H\subseteq G$ is \emph{induced} if  
$\{u,v\}\in E(H)\iff\{u,v\}\in E(G)$ for all $u,v\in V(H)$.  
For $A\subseteq V(G)$, the induced subgraph is
$G[A]=(A,\ \{\{u,v\}\in E(G)\mid u,v\in A\}).$
For $U\subseteq V(G)$, deletion means $G\setminus U=G[V(G)\setminus U]$.
For vertices $u_1,\dots,u_r\in V(G)$, the open and closed neighborhoods are
\[
N_G(u_1,\dots,u_r)=\{v\in V(G)\mid \{u_i,v\}\in E(G)\text{ for some $i$}\},
\]
\[
N_G[u_1,\dots,u_r]=N_G(u_1,\dots,u_r)\cup\{u_1,\dots,u_r\}.
\]

A path of length $k-1$ is a sequence of distinct vertices 
$P_k:v_1v_2\cdots v_k$ with $\{v_i,v_{i+1}\}\in E(G)$.  
A cycle of length $n\ge3$ is 
$C_n:v_1v_2\cdots v_nv_1$.  
The graph $G$ is \emph{connected} if every pair of vertices is joined by a path; a \emph{connected component} is a maximal connected induced subgraph.  
The \emph{distance} between vertices $u,v$ is 
\[
\dd_G(u,v)=\text{length of the shortest $u$--$v$ path}.
\]
A vertex $v$ is a \emph{cut vertex} if $G\setminus\{v\}$ has more connected components than $G$.
A graph is \emph{$2K_2$-free} if it contains no induced copy of two disjoint edges, i.e., no induced matching of size~$2$.  
A vertex $v$ is \emph{simplicial} if $G[N_G(v)]$ is a clique.  
If $G_1,\dots,G_n$ have pairwise disjoint vertex sets, their \emph{join} is
$\bigvee_{i=1}^n G_i,$
defined by
\[
V\!\left(\bigvee_{i=1}^n G_i\right)=\bigcup_{i=1}^n V(G_i),
\]
\[
E\!\left(\bigvee_{i=1}^n G_i\right)
=\Big(\bigcup_{i=1}^n E(G_i)\Big)
\ \cup\
\{\{u,v\}\mid u\in V(G_i),\,v\in V(G_j),\,i\ne j\}.
\]

\subsection{Clutters and Edge Ideals}

A \emph{clutter} $\mathcal{C}$ on a finite vertex set $V(\mathcal{C})$ is a family of subsets $E(\mathcal{C})$ (called \emph{edges}) such that no edge contains another.  
If all edges have the same cardinality $d$, then $\mathcal{C}$ is a \emph{$d$-uniform clutter} (or \emph{$d$-clutter}).  
A $2$-clutter is precisely a simple graph.
Let $V(\mathcal{C})=\{x_1,\dots,x_n\}$, and identify each $x_i$ with a variable of $\K[x_1,\dots,x_n]$.  
The \emph{edge ideal} of $\mathcal{C}$ is the squarefree monomial ideal
\[
I(\mathcal{C})=\Big(\prod_{x\in e} x \ \Bigm|\ e\in E(\mathcal{C})\Big),
\]
generalizing classical graph edge ideals and providing a uniform framework for squarefree monomial ideals generated in one degree~\cite{ha_adam}.  
For a $d$-uniform clutter $\mathcal{C}$, the \emph{complement clutter} $\overline{\mathcal{C}}$ is defined on the same vertex set by
\[
E(\overline{\mathcal{C}})=\{\,e\subseteq V(\mathcal{C})\mid |e|=d,\ e\notin E(\mathcal{C})\,\}.
\]
Two clutters $\mathcal{C}_1,\mathcal{C}_2$ are \emph{isomorphic}, written $\mathcal{C}_1\cong\mathcal{C}_2$, if there exists a bijection 
$\varphi:V(\mathcal{C}_1)\to V(\mathcal{C}_2)$ such that
\[
e\in E(\mathcal{C}_1)\ \Longleftrightarrow\ \varphi(e)\in E(\mathcal{C}_2),
\qquad 
\varphi(e)=\{\varphi(x):x\in e\}.
\]

We recall the terminology used to define chordality of clutters following~\cite{BYZ17}.  
A subset $W\subseteq V(\mathcal{C})$ is a \emph{clique} if every $d$-subset of $W$ is an edge; by convention, all subsets of size $<d$ are cliques.  
For a $(d{-}1)$-subset $\sigma\subseteq V(\mathcal{C})$, its \emph{closed neighborhood} is
\[
N_{\mathcal{C}}[\sigma]=\sigma\cup\{\,v\in V(\mathcal{C})\mid \sigma\cup\{v\}\in E(\mathcal{C})\,\}.
\]
If $N_{\mathcal{C}}[\sigma]\neq\sigma$, then $\sigma$ is a \emph{maximal subedge}.  
A maximal subedge $\sigma$ is \emph{simplicial} if $N_{\mathcal{C}}[\sigma]$ is a clique.  
Let $\Sim(\mathcal{C})$ denote the set of simplicial maximal subedges.
For $\sigma\subseteq V(\mathcal{C})$, the \emph{deletion} of $\sigma$ from $\mathcal{C}$ is
\[
\mathcal{C}\setminus\sigma
=\{\,e\in E(\mathcal{C}) \mid \sigma\not\subseteq e\,\}.
\]
For multiple deletions, we use the shorthand
$\mathcal{C}\setminus\sigma_1\setminus\cdots\setminus\sigma_t
\ =\
\mathcal{C}\setminus\{\sigma_1,\dots,\sigma_t\}.$

A $d$-uniform clutter $\mathcal{C}$ is \emph{chordal} if either $E(\mathcal{C})=\emptyset$, or there exists $\sigma\in\Sim(\mathcal{C})$ such that $\mathcal{C}\setminus\sigma$ is chordal.  
Equivalently, $\mathcal{C}$ can be reduced to the empty clutter by repeatedly deleting simplicial maximal subedges.  
A $d$-uniform clutter $\mathcal{C}$ is \emph{co-chordal} if $\overline{\mathcal{C}}$ is chordal.

\medskip

For a graph $G$ and an integer $r\ge1$, the \emph{$(r{+}1)$-clutter} of $G$ is
\[
V(\mathcal{C}_r(G))=V(G),\qquad
E(\mathcal{C}_r(G))
=\{\,S\subseteq V(G)\mid |S|=r+1,\ G[S]\ \text{connected}\,\}.
\]
For $r=1$, $\mathcal{C}_1(G)$ coincides with the edge set of $G$.  
The associated connected ideal is the squarefree monomial ideal
\[
I_r(G)=I(\mathcal{C}_r(G))
=\Big(\prod_{x\in S}x \ \Bigm|\ S\subseteq V(G),\ |S|=r+1,\ G[S]\text{ connected}\Big).
\]
In particular, $I_1(G)$ is the usual edge ideal.

\section{Technical Lemmas}\label{TL}

This section is devoted to auxiliary results that will be used in the proofs of the main theorems.  
The lemmas and propositions presented here form the technical backbone for the arguments developed in the subsequent sections.
We begin with a simple but useful observation regarding complete graphs.  
Since every induced subgraph of a complete graph is itself complete, the corresponding $(r+1)$-clutters admit an immediate description of chordality.  
This is recorded in the following lemma.

\begin{lemma}\label{compl-chordal}
Let $K_n$ be the complete graph on $n$ vertices. If $r\ge 1$ and $r+1\le n$, then the clutter $\mathcal{C}_r(K_n)$ is chordal.
\end{lemma}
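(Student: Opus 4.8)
The first step is to identify the clutter explicitly. Since every induced subgraph of $K_n$ is again complete and hence connected, any subset $S\subseteq V(K_n)$ with $|S|=r+1$ induces a connected graph; thus $E(\mathcal{C}_r(K_n))$ consists of \emph{all} $(r+1)$-subsets of $V(K_n)$. In other words, writing $d=r+1$ and identifying $V(K_n)=\{1,\dots,n\}$, the clutter $\mathcal{C}_r(K_n)$ is the complete $d$-uniform clutter on $n$ vertices. The plan is to reduce it to the empty clutter by repeatedly deleting simplicial maximal subedges, arguing by induction on $n\ge d$. The base case $n=d$ is immediate: there is a single edge $\{1,\dots,d\}$, and any $(d-1)$-subset $\sigma$ has $N_{\mathcal{C}}[\sigma]=\{1,\dots,d\}$, which is a clique, so $\sigma\in\Sim(\mathcal{C})$ and its deletion empties the clutter.

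For the inductive step I would peel off one vertex at a time. Fix the vertex $n$ and delete the $(d-1)$-subsets of the form $\{n\}\cup\tau$, where $\tau$ ranges over the $(d-2)$-subsets of $\{1,\dots,n-1\}$, processing them in increasing colexicographic order of $\tau$ and simply skipping any $\{n\}\cup\tau$ that is no longer a maximal subedge at its turn. A short bookkeeping argument shows that once all of these have been processed, every edge through $n$ has been removed: an edge $\{n\}\cup e'$ is killed when its colex-smallest $(d-2)$-subset $\tau_0\subseteq e'$ is processed, either directly or because it was already gone. What remains is the complete $d$-uniform clutter on $\{1,\dots,n-1\}$ together with the now-isolated vertex $n$, and this is chordal by the induction hypothesis; the extra isolated vertex is harmless, since a maximal subedge must by definition have a neighbor, so no subedge through $n$ is ever considered once $n$ is isolated.

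The heart of the argument, and the step I expect to be the main obstacle, is verifying that each $\sigma=\{n\}\cup\tau$ is genuinely \emph{simplicial} at the moment it is deleted, i.e.\ that $N_{\mathcal{C}}[\sigma]$ is a clique even though the clutter has already been thinned by earlier deletions. Two computations are needed. First I would show that the surviving one-vertex extensions of $\sigma$ are exactly those $v$ with $v>\max\tau$, so that $N_{\mathcal{C}}[\sigma]=\tau\cup\{\max\tau+1,\dots,n\}$: an extension $\{n\}\cup\tau\cup\{v\}$ with $v<\max\tau$ is always destroyed via the colex-earlier subedge $\{n\}\cup(\tau\setminus\{\max\tau\})\cup\{v\}$ (whether that subedge was deleted or merely skipped, the edge is already gone), while for $v>\max\tau$ no colex-earlier $(d-2)$-subset sits inside $\tau\cup\{v\}$. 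Second, to see that $N_{\mathcal{C}}[\sigma]$ is a clique, I would use the elementary fact that the colexicographically smallest $(d-2)$-subset of any finite set is its set of $d-2$ smallest elements: since every element of $\tau$ is at most $\max\tau$ while every other element of $N_{\mathcal{C}}[\sigma]$ exceeds $\max\tau$, the set $\tau$ is the colex-minimal $(d-2)$-subset of $N_{\mathcal{C}}[\sigma]\setminus\{n\}$. Consequently no $d$-subset of $N_{\mathcal{C}}[\sigma]$ can contain a colex-earlier deleted subedge $\{n\}\cup\tau'$, so every such $d$-subset is still an edge and $N_{\mathcal{C}}[\sigma]$ is indeed a clique. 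Finally, $\{n\}\cup\tau$ fails to be maximal precisely when $\max\tau=n-1$, which is exactly the case the skipping rule discards.
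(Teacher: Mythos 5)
Your proposal is correct and follows essentially the same route as the paper: induction on $n$, peeling a single vertex of the complete graph by deleting the $r$-element subedges through it in a monotone order, and verifying simpliciality by showing that the current closed neighborhood is a tail interval into which no earlier-deleted subedge can fit. The only difference is bookkeeping: the paper deletes subedges containing the peeled vertex but \emph{avoiding} the top vertex $n+1$ (so every subedge in its list stays maximal automatically, the extension $n+1$ being always available), whereas you delete all subedges through the peeled vertex in colex order and dispose of the non-maximal ones (exactly those with $\max\tau=n-1$) via a skip rule; both devices are sound.
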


\begin{proof}
We proceed by induction on $n$.
Suppose $n=r+1$.  
Let $V(K_n)=\{1,\dots,r+1\}$ and $e=\{1,\dots,r\}$. Then
$N_{\mathcal{C}_r(K_n)}[e]=e\cup\{r+1\}=V(K_n).$
Since $V(K_n)$ itself is an edge of $\mathcal{C}_r(K_n)$, it follows that 
\(e\in \mathrm{Sim}(\mathcal{C}_r(K_n))\), and
$\mathcal{C}_r(K_n)\setminus e=\emptyset.$
Thus $\mathcal{C}_r(K_n)$ is chordal.

Assume that $\mathcal{C}_r(K_n)$ is chordal for some $n\ge r+1$.  
Let $V(K_{n+1})=\{1,\dots,n+1\}$ and define
\[
\mathcal{E}
=
\{\, e\subset V(K_{n+1}) : |e|=r,\ 1\in e,\ n+1\notin e \,\}.
\]
Order $\mathcal{E}$ lexicographically:
\[
\mathcal{E}=\{e_1,e_2,\dots,e_m\},
\qquad
e_i=\{a_1<\cdots<a_r\},\ a_1=1,\ a_r\le n.
\]
For $i\ge 1$, set
$D_i=\mathcal{C}_r(K_{n+1})\setminus\{e_1,\dots,e_{i-1}\}.$

\medskip
\noindent
\textsc{Claim.}
$e_i\in \operatorname{Sim}(D_i)$ for all $i=1,\dots,m$.

\medskip
\noindent
\emph{Proof of claim.}
Observe that,
$N_{D_i}[e_i]
=
e_i\cup\{\,v\in V(K_{n+1}) : v>a_r\,\},$
and $a_r\le n$. Now suppose $e_j\subseteq e_i\cup \{n+1\}$ for some $1\le j\le i-1$. Since $n+1\notin e_j$, therefore $e_j\subseteq e_i$ and since $|e_j|=|e_i|$, therefore $e_j=e_i$, which is a contradiction.  
So $n+1\in N_{D_i}[e_i]$. Hence $ N_{D_i}[e_i]\not= D_i$.
For $j<i$, $e_j$ contains some vertex $b_j<a_r$ that is not in $N_{D_i}[e_i]$. Thus
$e_j\not\subset N_{D_i}[e_i].$
Consequently, every $(r+1)$-subset of $N_{D_i}[e_i]$ is an edge of $D_i$, so $e_i$ is simplicial in $D_i$.  
%Maximality follows because no previously deleted edge contains $e_i$.  
\hfill $\square$

\medskip

Deleting $e_1,\dots,e_m$ from $\mathcal{C}_r(K_{n+1})$ produces a clutter isomorphic to
$\mathcal{C}_r(K_{n+1}\setminus\{1\}) \cong \mathcal{C}_r(K_n).$
By the induction hypothesis, $\mathcal{C}_r(K_n)$ is chordal, and since
$(e_1,\dots,e_m)$ is a simplicial elimination sequence, the original clutter
$\mathcal{C}_r(K_{n+1})$ is chordal as well.
\end{proof}

A \emph{block graph} is a graph in which every block-that is, every maximal $2$-connected subgraph-is a clique.
The next result demonstrates that, for this class of graphs, the property of admitting a linear resolution extends naturally to the higher connected ideals associated to their complements.

\begin{theorem}\label{block-p4}
If $G$ is a block graph, then the ideal $I(\overline{\con_r(G)})$ has a linear resolution for all $r\ge 2$.
\end{theorem}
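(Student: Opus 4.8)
The plan is to translate the statement into a chordality question about the connected clutter and then settle it by a simplicial-elimination argument that peels simplicial vertices off the block graph. By the theorem of Bigdeli--Yazdanpour--Zaare-Nahandi~\cite{BYZ17}, if the clutter $\overline{\con_r(G)}$ is co-chordal then its edge ideal $I(\overline{\con_r(G)})$ has a linear resolution. Since the complement of a uniform clutter is an involution, $\overline{\con_r(G)}$ is co-chordal precisely when its complement $\con_r(G)=\mathcal{C}_r(G)$ is chordal. Thus it suffices to prove: for every block graph $G$ and every $r\ge2$, the clutter $\mathcal{C}_r(G)$ is chordal.

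I would prove this by induction on $|V(G)|$. If $G$ is a complete graph (in particular the one-block case), chordality of $\mathcal{C}_r(G)$ is exactly Lemma~\ref{compl-chordal}, and if $|V(G)|\le r+1$ the clutter has at most one edge and is trivially chordal. Otherwise $G$ has at least two blocks, so it contains an end block $B$ with a single cut vertex $c$, and any vertex $v\in V(B)\setminus\{c\}$ is simplicial with $N_G(v)=V(B)\setminus\{v\}$. The deletion $G\setminus v$ is again a block graph on fewer vertices, so $\mathcal{C}_r(G\setminus v)$ is chordal by the inductive hypothesis. The goal of the step is to exhibit a sequence of deletions of maximal subedges, each containing $v$ and each simplicial at the moment it is removed, that transforms $\mathcal{C}_r(G)$ into $\mathcal{C}_r(G\setminus v)$. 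Because every deleted subedge contains $v$, no edge avoiding $v$ is ever destroyed; and every edge $S\ni v$ contains an extendable $r$-subset through $v$, so exhausting all such subedges removes exactly the edges through $v$ and lands in $\mathcal{C}_r(G\setminus v)$.

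The elimination order is the heart of the argument and is modeled on the lexicographic elimination inside Lemma~\ref{compl-chordal}. I would first delete the subedges $\sigma\ni v$ with $\sigma\subseteq V(B)\setminus\{c\}$. For such $\sigma$ every connected extension lies in the clique $B$, since the remaining vertices of $\sigma$ are simplicial with no neighbours outside $B$; hence $N_{\mathcal{C}_r(G)}[\sigma]\subseteq V(B)$ is a clique and $\sigma$ is simplicial. After these deletions the only surviving edges through $v$ are those leaving $B$, and in a block graph such an edge must pass through the cut vertex $c$; what remains to delete are therefore the \emph{straddling} subedges containing both $v$ and $c$. I would order these lexicographically, with $c$ and the interior of $B$ playing the role of the retained large vertices in Lemma~\ref{compl-chordal}, and delete them in turn.

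The main obstacle is verifying that each straddling subedge is simplicial at the instant it is deleted. Such a $\sigma$ a priori extends both into $B$ and into the other blocks attached at $c$, so in $\mathcal{C}_r(G)$ itself its closed neighborhood is not a clique and $\sigma$ is not simplicial. The point of the ordering is that, by the time $\sigma$ is reached, all of its extensions into $B$ and all but one of its competing extension directions through $c$ have already been removed, so that in the current clutter $N[\sigma]$ collapses either to a clique contained in a single block or to a set of size exactly $r+1$ inducing a connected subgraph; in both cases $\sigma$ is simplicial. Here the block structure, equivalently diamond-freeness, is essential: it forces any vertex set all of whose $(r+1)$-subsets are connected to lie inside a single clique, and hence guarantees that these residual neighborhoods are genuine cliques. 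Carrying out this bookkeeping, a careful extension of the claim established inside the proof of Lemma~\ref{compl-chordal}, completes the induction and with it the proof.
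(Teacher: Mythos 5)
Your reduction to chordality of $\mathcal{C}_r(G)$ via \cite[Theorem~3.3]{BYZ17}, the induction on $|V(G)|$, and your first phase (eliminating the subedges $\sigma\ni v$ with $\sigma\subseteq V(B)\setminus\{c\}$ by a lexicographic order inside the clique $B$, as in Lemma~\ref{compl-chordal}) all match the paper's proof in structure and are sound. The gap is in your second phase. You propose to finish by deleting the \emph{straddling} subedges ``containing both $v$ and $c$,'' hoping that a suitable ordering makes each one simplicial when its turn comes. This cannot work, and the obstruction appears already in the smallest example: take $G=K_{1,3}$ (a tree, hence a block graph) with center $c$ and leaves $v,y_1,y_2$, and $r=2$. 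Here $B=\{v,c\}$, phase~1 is empty, and the edges through $v$ are $\{v,c,y_1\}$ and $\{v,c,y_2\}$. The only $2$-subedge containing both $v$ and $c$ is $\{v,c\}$ itself, and $N[\{v,c\}]=\{v,c,y_1,y_2\}$ is not a clique in $\mathcal{C}_2(G)$, because $\{v,y_1,y_2\}$ is disconnected in $G$. Moreover, no prior deletion from your allowed set can remove either competing edge $\{v,c,y_i\}$, since the only subedge of $\{v,c,y_i\}$ containing both $v$ and $c$ is $\{v,c\}$ itself; so the ``collapse'' of $N[\sigma]$ that you invoke can never be produced by deletions of the kind you allow, and the elimination is stuck at its first step. (There is also an internal inconsistency in your write-up: if $c$ is to play ``the role of the retained large vertex'' of Lemma~\ref{compl-chordal}, it must be \emph{excluded} from the deleted subedges, not required to lie in them.)

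The repair is the opposite choice of subedge, and it is exactly what the paper does: for each straddling edge $g$ (so $v\in g$, $c\in g$, and $g$ contains a vertex at distance at least $2$ from $v$), delete the $r$-subedge $e=g\setminus\{c\}$, which contains $v$ but \emph{not} $c$. Since $c$ is a cut vertex separating $v$ from the far vertices of $e$, the unique connected extension of $e$ is $c$ itself, so $N[e]=g$ is a single edge, hence a clique, and $e$ is simplicial; distinct straddling edges yield distinct such subedges, and no one of them is contained in another's edge, so they can be deleted one after another (the paper in fact performs these deletions \emph{before} the within-block phase, but the phase order is immaterial---the choice of subedge is not). In the $K_{1,3}$ example this means deleting $\{v,y_1\}$ and then $\{v,y_2\}$, after which the induction closes. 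So your high-level plan coincides with the paper's, but the key step---which subedges to delete in order to kill the straddling edges---is chosen wrongly, and as written the proof fails.
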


\begin{proof}
We apply induction on $|V(G)|$. If $|V(G)|=2$, then $\mathcal{C}_r(G)$ is trivially chordal.
Assume the result holds for all block graphs with at most $n$ vertices, and let $G$ be a block graph on $n+1$ vertices. Let $\cp(B_1,\dots,B_m)$ be a maximal clique path with connecting vertices $x_1,\dots,x_{m-1}$ (see \cite[Page~7]{GS25}). Choose $x_0\in V(B_1)\setminus\{x_1\}$.
Set
\[
\mathcal{B}=\{\, e\in E(\mathcal{C}_r(G)) : x_0\in e \,\},\qquad
A_e=\{\, z\in e : \dd_G(x_0,z)\ge2\,\},\qquad
A=\bigcup_{e\in\mathcal{B}} A_e,
\]
and let $g_1,\dots,g_s\in E(\mathcal{C}_r(G))$ be those edges for which $x_0\in g_i$ and $g_i\cap A\neq\emptyset$. Then $x_1\in g_i$ for all $i$, and we define $e_i=g_i\setminus\{x_1\}$.

\medskip\noindent\textsc{Claim.}
 $e_i\in\operatorname{Sim}(\con_r(G)\setminus \{e_1, \dots, e_{i-1}\})$ for $i=1,\dots,s$.

\smallskip\noindent
Since $N_{\mathcal{C}_r(G)}[e_1]=e_1\cup\{x_1\}=g_1\in E(\mathcal{C}_r(G))$, then $e_1\in \operatorname{Sim}(\con_r(G))$.  For $i>1$, set $D=\mathcal{C}_r(G)\setminus\{e_1,\dots,e_{i-1}\}$. Because $e_j\neq e_i$ for $j<i$, we have $e_j\not\subseteq e_i\cup\{x_1\}$; hence $g_i=e_i\cup\{x_1\}\in E(D)$ and $N_D[e_i]=g_i$, showing that $e_i$ is maximal simplicial in $D$. \hfill$\square$

\medskip
Now let $x_0 \in e$, where $e \in E\!\left(\con_r(G) \setminus \{e_1,\dots,e_s\}\right)$.  
Observe that $e \subseteq V(B_1)$.  
Arguing as in Lemma~\ref{compl-chordal}, we obtain $e_{s+1},\dots,e_t$ such that
\[
e_i \in \Sim\!\left(\overline{\con_r(G)} \setminus \{e_1,\dots,e_{i-1}\}\right)
\qquad\text{for } i=s+1,\dots,t,
\]
and
$\overline{\con_r(G)} \setminus \{e_1,\dots,e_t\}
= \con_r\!\bigl(G \setminus \{x_0\}\bigr).$
The induction hypothesis then completes the argument.

Finally, by \cite[Theorem~3.3]{BYZ17}, chordality of $\mathcal{C}_r(G)$ implies that  
$I(\overline{\con_r(G)})$ has a linear resolution for all $r \ge 2$.
\end{proof}

\begin{theorem}\label{cycle-chordal}
If $G=C_n$ is a cycle on $n$ vertices, then the ideal $I(\overline{\con_r(G)})$ has a linear resolution for all $r\ge 2$.
\end{theorem}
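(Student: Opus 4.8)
The plan is to follow exactly the scheme of the proof of Theorem~\ref{block-p4}: by \cite[Theorem~3.3]{BYZ17} it suffices to prove that the clutter $\mathcal{C}_r(C_n)$ is \emph{chordal}, since chordality of $\mathcal{C}_r(C_n)$ is precisely co-chordality of $\overline{\mathcal{C}_r(C_n)}$ and therefore forces $I(\overline{\mathcal{C}_r(C_n)})$ to have a linear resolution. So the entire problem reduces to exhibiting a simplicial elimination order for $\mathcal{C}_r(C_n)$. The first step is to describe its edges explicitly. Writing $V(C_n)=\{1,\dots,n\}$ with cyclic adjacency $i\sim i+1$, an induced subgraph $C_n[S]$ on a proper subset $S$ is a disjoint union of paths, hence connected exactly when $S$ consists of $r+1$ cyclically consecutive vertices. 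Thus, for $r+1<n$, the edges of $\mathcal{C}_r(C_n)$ are precisely the $n$ \emph{arcs} $A_i=\{i,i+1,\dots,i+r\}$ (indices mod $n$).

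Before the main argument I would dispose of the degenerate ranges. If $r+1>n$ the clutter is empty and chordality is vacuous. If $r+1\in\{n-1,n\}$, then deleting at most one vertex from $C_n$ leaves a path, so every $(r+1)$-subset induces a connected graph; hence $\mathcal{C}_r(C_n)=\mathcal{C}_r(K_n)$ and Lemma~\ref{compl-chordal} applies verbatim. This isolates the genuine range $3\le r+1\le n-2$, where the arc description above holds and each arc has nonempty interior, namely the $r-1\ge1$ vertices that are not its endpoints (here the hypothesis $r\ge2$ enters).

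In the genuine case I would build the elimination order one arc at a time. Fix any ordering $A_1,\dots,A_n$ of the arcs; for each $A_i$ choose an interior vertex $v_i$ and set $\sigma_i=A_i\setminus\{v_i\}$, an $r$-subset. The key local computation is that $\sigma_i$ is a single arc broken by one internal gap, so the only vertex whose addition restores a connected $(r+1)$-set is $v_i$ itself. Consequently $A_i$ is the unique edge of $\mathcal{C}_r(C_n)$ containing $\sigma_i$, and $N_{\mathcal{C}_r(C_n)}[\sigma_i]=A_i$. I would then verify that after deleting $\sigma_1,\dots,\sigma_{i-1}$ the remaining clutter $D_i$ has edge set exactly $\{A_i,\dots,A_n\}$, since each earlier deletion removed only its own arc; that $N_{D_i}[\sigma_i]=A_i$ persists; and that $A_i$ is a clique in $D_i$ for the trivial reason that its only $(r+1)$-subset is $A_i\in E(D_i)$. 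Hence $\sigma_i\in\Sim(D_i)$ and $D_{i+1}=D_i\setminus\sigma_i$, so after $n$ steps we reach the empty clutter and $\mathcal{C}_r(C_n)$ is chordal.

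The main obstacle is the boundary between the ``genuine arc'' regime and the ``complete clutter'' regime. The clean identity $N[\sigma_i]=A_i$ rests on the two vertices adjacent to $A_i$ lying outside it without closing the gap, which fails exactly when $r+1\ge n-1$: there, adding the one or two missing vertices also produces connected sets, so $\sigma_i$ gains extra neighbors and the ``one arc per deletion'' picture breaks down. Recognizing that these are precisely the cases in which $\mathcal{C}_r(C_n)$ coincides with the complete clutter, and routing them through Lemma~\ref{compl-chordal} rather than forcing the arc argument, is the single point requiring care; the rest is direct verification.
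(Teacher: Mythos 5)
Your proof is correct, and while it rests on the same local computation as the paper's---removing an interior vertex from an arc of $r+1$ consecutive vertices yields a maximal subedge whose closed neighborhood is exactly that arc, hence simplicial---it is organized quite differently and is more self-contained. After observing that for $r+1\le n-2$ the edges of $\mathcal{C}_r(C_n)$ are precisely the $n$ cyclic arcs, you eliminate all $n$ arcs in a single global pass, each deletion removing exactly one arc, with no recursion. The paper instead eliminates only the arcs through the fixed vertex $1$ (first those with $1$ interior, then the two with $1$ as an endpoint), reducing the clutter to $\con_r(P_{n-1})$, and then invokes Theorem~\ref{block-p4} for paths viewed as block graphs; your route needs neither that theorem nor any induction beyond the elimination sequence itself. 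Your explicit isolation of the boundary cases $r+1\in\{n-1,n\}$, handled via Lemma~\ref{compl-chordal} because there $\mathcal{C}_r(C_n)$ coincides with the complete clutter $\mathcal{C}_r(K_n)$, is not merely cosmetic: in that regime the paper's own intermediate claim $N_{\con_r(G)}[e_1]=e_1'$ fails (for example, with $n=6$, $r=4$, $e_1=\{6,2,3,4\}$ one also has $\{2,3,4,5,6\}$ connected, so $N[e_1]=V$), and the asserted reduction to $\con_r(P_{n-1})$ breaks down, the theorem surviving only because the clutter is complete there---which is exactly the case you routed through Lemma~\ref{compl-chordal}. In short, the paper's proof buys uniformity with its block-graph machinery, while yours buys transparency and a correct treatment of the boundary regime.
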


\begin{proof}
Let $V(C_n)=\{1,\dots,n\}$ and 
$E(C_n)=\{\{1,2\},\{2,3\},\dots,\{n-1,n\},\{n,1\}\}.$

\medskip
\noindent
\textit{Case:} $n=r+1$.
Then $E(\mathcal{C}_r(C_n))=\{V\}$, where $V=\{1,\dots,r+1\}$.  
Let $\sigma=V\setminus\{r+1\}$. Since
\[
N_{\mathcal{C}_r(C_n)}[\sigma]
=
\sigma\cup\{v:\sigma\cup\{v\}\in E(\mathcal{C}_r(C_n))\}
=
V,
\]
and $V\in E(\mathcal{C}_r(C_n))$, we have $\sigma\in\mathrm{Sim}(\mathcal{C}_r(C_n))$.  
Moreover,
$\mathcal{C}_r(C_n)\setminus\{\sigma\}=\emptyset,$
so $\mathcal{C}_r(C_n)$ is chordal.

\medskip
\noindent
\textit{Case:} $n>r+1$.
Since $N_G(1)=\{2,n\}$, define
\[
\{e_1',\dots,e_m'\}
=
\{\, e\in E(\con_r(G))\mid \{1,2,n\}\subset e\,\},
\qquad
e_i=e_i'\setminus\{1\}.
\]
Because $1\in N_{\con_r(G)}[e_1]$ and $n> r+1$, we have
$N_{\con_r(G)}[e_1]=e_1',$
hence $e_1$ is simplicial and maximal in $\con_r(G)$.  
For $i>1$, let
$C=\con_r(G)\setminus\{e_1,\dots,e_{i-1}\}.$
As $e_j\neq e_i$ for all $j<i$, we have 
$e_j\not\subseteq e_i\cup\{1\},$
so $e_i\cup\{1\}=e_i'\in E(C)$, and thus
$N_C[e_i]=e_i'.$
Hence $e_i$ is simplicial and maximal in $C$.
Now define
\[
e_{m+1}=\{1,2,\dots,r\},\qquad
e_{m+2}=\{1,n,n-1,\dots,n-r+2\}.
\]
Then
\[
e_{m+1}\in \mathrm{Sim}\big(\con_r(G)\setminus\{e_1,\dots,e_m\}\big),
\qquad
e_{m+2}\in \mathrm{Sim}\big(\con_r(G)\setminus\{e_1,\dots,e_{m+1}\}\big).
\]

After deleting $e_1,\dots,e_{m+2}$ we obtain
$\con_r(G)\setminus\{e_1,\dots,e_{m+2}\}\cong \con_r(P_{n-1}),$
and by Theorem~\ref{block-p4}, $\mathcal{C}_r(P_{n-1})$ is chordal.  
Thus $\mathcal{C}_r(C_n)$ is chordal.

Finally, \cite[Theorem~3.3]{BYZ17} implies that chordality of $\mathcal{C}_r(C_n)$ yields a linear resolution of $I(\overline{\con_r(G)})$ for all $r\ge 2$.
\end{proof}

The next lemma provides a useful relationship between the complement of $\con_r(G)$ and the construction $\con_r(\overline{G})$.  
This inclusion will play an important role in the arguments that follow.

\begin{lemma}\label{inclusion}
Let $G$ be a graph and $r\ge1$. Then
$E\!\left(\overline{\con_r(G)}\right)\subseteq E\!\left(\con_r(\overline{G})\right),$
and for $r=1,2$,
$E\!\left(\overline{\con_r(G)}\right)=E\!\left(\con_r(\overline{G})\right).$
\end{lemma}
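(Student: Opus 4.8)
The plan is to translate both sides of the asserted inclusion into purely graph-theoretic statements about the connectivity of induced subgraphs, and then invoke the elementary fact that a disconnected graph has connected complement. Unwinding the definitions, since $\con_r(G)$ is an $(r+1)$-uniform clutter on $V(G)$, we have $E(\overline{\con_r(G)}) = \{\, S \subseteq V(G) : |S| = r+1,\ G[S] \text{ is disconnected}\,\}$, while $E(\con_r(\overline G)) = \{\, S \subseteq V(G) : |S| = r+1,\ \overline G[S] \text{ is connected}\,\}$. The first observation I would record is that passing to an induced subgraph commutes with complementation, i.e.\ $\overline G[S] = \overline{G[S]}$ as graphs on the vertex set $S$: for $u,v \in S$ one has $\{u,v\} \in E(\overline G[S])$ iff $\{u,v\} \notin E(G)$ iff $\{u,v\} \in E(\overline{G[S]})$. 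Thus both edge sets are described entirely in terms of the single graph $H := G[S]$ and its complement.

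With this reduction in hand, the inclusion $E(\overline{\con_r(G)}) \subseteq E(\con_r(\overline G))$ amounts to the classical statement that if a graph $H$ on at least two vertices is disconnected then $\overline H$ is connected. I would prove this directly: writing the components of $H$ as $H_1,\dots,H_k$ with $k \ge 2$, any two vertices $u,v$ lying in distinct components are adjacent in $\overline H$, while any two vertices $u,v$ lying in a common component $H_i$ are joined by the path $u\,w\,v$ in $\overline H$ for any $w$ chosen from another component $H_j$ with $j\ne i$. Applying this with $H = G[S]$ shows that whenever $G[S]$ is disconnected, $\overline G[S] = \overline{G[S]}$ is connected, giving the desired inclusion for every $r \ge 1$.

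For the reverse inclusion in the cases $r = 1$ and $r = 2$, I would establish the converse implication ``$\overline{G[S]}$ connected $\Rightarrow$ $G[S]$ disconnected'' for $|S| \in \{2,3\}$, or equivalently its contrapositive ``$G[S]$ connected $\Rightarrow$ $\overline{G[S]}$ disconnected.'' A short edge count settles both cardinalities: a connected graph on $2$ vertices is $K_2$, whose complement has no edges; a connected graph on $3$ vertices has at least $2$ of the $3$ possible edges, so its complement has at most one edge and hence an isolated vertex. In either case the complement is disconnected. Combining this with the forward direction shows that for $|S| = r+1 \le 3$ exactly one of $G[S]$, $\overline{G[S]}$ is connected, so the two edge sets coincide and the equality follows.

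The argument involves no serious obstacle; the only genuine subtlety is the commutation $\overline G[S] = \overline{G[S]}$, which is what licenses the reduction to a single induced graph. It is worth emphasizing why the equality does not persist for $r \ge 3$: the reverse implication fails precisely because a graph and its complement may both be connected once $|S| \ge 4$, the smallest witness being the self-complementary path $P_4$ (with $\overline{P_4} \cong P_4$). For such an $S$ one has $G[S]$ connected yet $\overline G[S]$ connected as well, so $S$ lies in $E(\con_r(\overline G)) \setminus E(\overline{\con_r(G)})$, explaining the restriction to $r \le 2$ in the equality.
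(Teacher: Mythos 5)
Your proof is correct and follows essentially the same route as the paper's: the inclusion via the fact that a disconnected induced subgraph has connected complement, and the equality for $r=1,2$ via the contrapositive on vertex sets of size $2$ and $3$ (the paper classifies the connected $3$-vertex graphs as $P_3$ or $K_3$, while you count edges—the same argument in different clothing). Your additional remarks (the commutation $\overline{G}[S]=\overline{G[S]}$ and the $P_4$ witness for failure when $r\ge 3$) are correct but supplementary.
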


\begin{proof}
Let $\{a_1,\ldots,a_{r+1}\}\in E(\overline{\con_r(G)})$.  
Then it is not an edge of $\con_r(G)$, so $G[\{a_1,\ldots,a_{r+1}\}]$ is disconnected, whence 
$\overline{G}[\{a_1,\ldots,a_{r+1}\}]$ is connected.  
Thus $\{a_1,\ldots,a_{r+1}\}\in E(\con_r(\overline{G}))$, proving the inclusion.
For $r=1$ the equality is immediate from the definition of the complement of a graph.  
For $r=2$, we already have the inclusion above.  
Conversely, if $\{a,b,c\}\in E(\con_2(\overline{G}))$, then $\overline{G}[\{a,b,c\}]$ is connected, hence isomorphic to $P_3$ or $K_3$.  
In either case $G[\{a,b,c\}]$ is disconnected, so $\{a,b,c\}\in E(\overline{\con_2(G)})$.  
Therefore $E(\overline{\con_2(G)})=E(\con_2(\overline{G}))$.
\end{proof}

\begin{remark}\label{rem:reg-cycle}
It is worth noting that 
$\reg\!\bigl(I_1(\overline{C_n})\bigr)
   \;=\;
   \reg\!\bigl(I_2(\overline{C_n})\bigr)
   \;=\;3
   ~\text{for all } n\ge4.$
For $I_2(\overline{C_n})$, this follows immediately from Theorem~\ref{cycle-chordal} together with Lemma~\ref{inclusion}.  
The equality for $I_1(\overline{C_n})$ is established in~\cite[Theorem~2.6]{MYZ12}.
\end{remark}

The following lemma establishes a well-known connection between graph joins and disconnected complements.  
It provides a structural characterization that will be used in subsequent analysis.

\begin{lemma}\label{join-dis}
Let $G$ be a connected graph. Then 
\[
G = H_1 \vee \cdots \vee H_n \ (n\ge 2)
\quad\Longleftrightarrow\quad
\overline{G} \ \text{is disconnected}.
\]
\end{lemma}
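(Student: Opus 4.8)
The plan is to prove the two implications separately, in each case translating the absence of edges between vertex blocks in one graph into the presence of all possible edges between those blocks in the complement. The whole argument rests on the single observation that the join operation is precisely the ``complete multipartite-type'' gluing of the factors, which is exactly dual to a separation in the complement.

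For the forward direction, I would assume $G = H_1 \vee \cdots \vee H_n$ with $n \ge 2$ and examine $\overline{G}$ on the same vertex set. By definition of the join, for any $u \in V(H_i)$ and $v \in V(H_j)$ with $i \neq j$ we have $\{u,v\} \in E(G)$, hence $\{u,v\} \notin E(\overline{G})$. Thus in $\overline{G}$ there are no edges joining distinct blocks $V(H_i)$ and $V(H_j)$, so $\overline{G}$ decomposes as the disjoint union $\overline{H_1} \sqcup \cdots \sqcup \overline{H_n}$. Since $n \ge 2$ and each block is nonempty, $\overline{G}$ is disconnected.

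For the reverse direction, I would assume $\overline{G}$ is disconnected and let $C_1, \dots, C_n$ (with $n \ge 2$) be its connected components, which partition $V(G) = V(\overline{G})$. Since the $C_i$ are distinct components of $\overline{G}$, there is no edge of $\overline{G}$ between $C_i$ and $C_j$ for $i \neq j$; equivalently, every vertex of $C_i$ is adjacent in $G$ to every vertex of $C_j$. Setting $H_i = G[C_i]$, this says exactly that $G$ contains all cross-block edges, so $G = H_1 \vee \cdots \vee H_n$ with $n \ge 2$.

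I do not anticipate a genuine obstacle, as the statement is a routine dictionary between joins and complements; the only point requiring slight care is bookkeeping in the backward direction to confirm that using the connected components of $\overline{G}$ as the blocks yields a \emph{bona fide} join decomposition (in particular that the induced subgraphs $G[C_i]$ are the correct join factors and that $n \ge 2$ is inherited from the disconnectedness of $\overline{G}$). The hypothesis that $G$ itself is connected is not strictly needed for either implication but is consistent with the setup, since a nontrivial join is always connected.
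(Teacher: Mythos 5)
Your proof is correct and follows essentially the same route as the paper: both directions rest on the duality ``complement of a join $=$ disjoint union of complements,'' with your version merely spelling out the edge-level bookkeeping that the paper invokes in one line. No gaps; your closing remark that connectedness of $G$ is not actually needed is also accurate.
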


\begin{proof}
Suppose 
$G = H_1 \vee \cdots \vee H_n,~ n\ge 2.$
Then
$\overline{G}
=
\overline{H_1 \vee \cdots \vee H_n}
=
\overline{H_1}\ \sqcup\ \cdots\ \sqcup\ \overline{H_n},$
since the complement of a join is the disjoint union of the complements.  
Hence $\overline{G}$ is disconnected.
Conversely, assume
$\overline{G}=G_1\sqcup\cdots\sqcup G_m,~m\ge 2.$
Taking complements,
$G=\overline{G_1}\ \vee\ \cdots\ \vee\ \overline{G_m},$
so $G$ is (isomorphic to) a join of graphs.
\end{proof}

We now provide a more precise description of the edges of $\overline{\con_r(G)}$ in terms of joins of induced subgraphs of the complement.  
This characterization will serve as a key tool in the structural and algebraic analysis developed in the subsequent sections.

\begin{lemma}\label{chara}
Let $G$ be a graph and let $r \ge 1$. Then

$E\bigl(\overline{\con_r(G)}\bigr)$
$=$
$\left\{
\{a_1,\dots,a_{r+1}\} \in E\bigl(\con_r(\overline{G})\bigr)\ \middle|\ 
\overline{G}[\{a_1,\dots,a_{r+1}\}] = B_1 \vee \cdots \vee B_m 
\text{ for some } m \ge 2
\right\},$

where each $B_i$ is an induced subgraph of $\overline{G}$.
\end{lemma}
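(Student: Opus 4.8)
The plan is to prove the set equality by double inclusion, with Lemma~\ref{inclusion} and Lemma~\ref{join-dis} supplying essentially the entire argument. Throughout I would write $S=\{a_1,\dots,a_{r+1}\}$ and rely on the basic fact that complementation commutes with passing to an induced subgraph, so that the complement of $\overline{G}[S]$ computed within $S$ is exactly $G[S]$. The two membership conditions to keep in mind are: $S\in E(\overline{\con_r(G)})$ precisely when $G[S]$ is disconnected (by definition of the complement clutter), and $S\in E(\con_r(\overline{G}))$ precisely when $\overline{G}[S]$ is connected.

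For the forward inclusion, I would start from $S\in E(\overline{\con_r(G)})$, so that $G[S]$ is disconnected. Lemma~\ref{inclusion} immediately gives $S\in E(\con_r(\overline{G}))$, i.e.\ $\overline{G}[S]$ is connected, which places $S$ in the ambient set on the right-hand side. The remaining task is to produce the join decomposition: applying Lemma~\ref{join-dis} to the connected graph $\overline{G}[S]$, whose complement $G[S]$ is disconnected, yields $\overline{G}[S]=B_1\vee\cdots\vee B_m$ with $m\ge2$. Each $B_i$ is an induced subgraph of $\overline{G}[S]$, hence of $\overline{G}$, so $S$ indeed lies in the right-hand set.

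For the reverse inclusion I would begin with an $S$ in the right-hand set, so that $\overline{G}[S]$ is connected and admits a join decomposition $B_1\vee\cdots\vee B_m$ with $m\ge2$. Here Lemma~\ref{join-dis}, applied in the same direction but read backwards, forces the complement $G[S]$ to be disconnected, whence $S\in E(\overline{\con_r(G)})$ by definition of the complement clutter. This closes the loop.

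The only place demanding care---and the nearest thing to an obstacle---is the bookkeeping of which graph plays the role of the connected graph and which its complement when invoking Lemma~\ref{join-dis}: the lemma must be applied with $\overline{G}[S]$ as the connected graph and $G[S]$ as its disconnected complement. Once this identification is pinned down, both inclusions collapse to a single use of the join/disconnected-complement equivalence, and the inclusion furnished by Lemma~\ref{inclusion} is exactly what guarantees the forward direction lands inside $E(\con_r(\overline{G}))$ as the statement requires.
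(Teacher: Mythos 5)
Your proposal is correct and follows essentially the same route as the paper's own proof: both directions are handled by combining Lemma~\ref{inclusion} (to land in $E(\con_r(\overline{G}))$) with the join/disconnected-complement equivalence of Lemma~\ref{join-dis} applied to $\overline{G}[S]$. The only cosmetic difference is that the paper phrases the forward direction as a contradiction argument, whereas you invoke the direct implication of the biconditional; the content is identical.
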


\begin{proof}
Let $\{a_1,\ldots,a_{r+1}\}\in E(\overline{\con_r(G)})$.  
By Lemma~\ref{inclusion}, it also lies in $E(\con_r(\overline{G}))$.  
If $\overline{G}[\{a_1,\ldots,a_{r+1}\}]$ did not decompose as a join 
$B_1\vee\cdots\vee B_m$ with $m\ge2$, then by Lemma~\ref{join-dis} the induced subgraph  
$G[\{a_1,\ldots,a_{r+1}\}]$ would be connected, implying  
$\{a_1,\ldots,a_{r+1}\}\in E(\con_r(G))$, a contradiction.  
Hence such a decomposition exists.
Conversely, if $\{b_1,\ldots,b_{r+1}\}\in E(\con_r(\overline{G}))$ and  
$\overline{G}[\{b_1,\ldots,b_{r+1}\}]=B_1\vee\cdots\vee B_m$ for some $m\ge2$ and some graphs $B_i$,  
then Lemma~\ref{join-dis} implies that $G[\{b_1,\ldots,b_{r+1}\}]$ is disconnected.  
Thus $\{b_1,\ldots,b_{r+1}\}\in E(\overline{\con_r(G)})$.\end{proof}
Before introducing the next concept, we recall that edges of $\overline{\con_r(G)}$ containing simplicial vertices of $\overline{G}$ frequently arise from join decompositions of the corresponding induced subgraphs.  
We now formalize a minimal form of such decompositions.

\begin{definition}
Let $G$ be a graph, $r\ge1$, and let $e\in E(\overline{\con_r(G)})$ be an edge containing a simplicial vertex $x$ of $\overline{G}$.  
Assume that the induced subgraph $\overline{G}[e]$ admits a join decomposition
$\overline{G}[e]=B\vee D,$
where $D$ is complete and $x\in V(B)$.  
We say that $e$ is in \emph{reduced form} if no proper refinement exists; that is, there do not exist subgraphs $B'\subsetneq B$ and $D'\supsetneq D$ such that
\[
\overline{G}[e]=B'\vee D'
\quad\text{and}\quad
x\in V(B').
\]
\end{definition}

The next lemma guarantees the existence of a canonical join decomposition for edges of $\overline{\con_r(G)}$ that contain simplicial vertices.  
This result ensures that every such edge can be expressed in reduced form, providing a useful structural tool for the arguments developed later.

\begin{lemma}\label{simp-join}
Let $G$ be a graph, $r\ge 1$, and $e\in E(\overline{\con_r(G)})$ contain a simplicial vertex $x$ of $\overline{G}$. Then
\[
\overline{G}[e]=B\vee D,
\]
where $D$ is complete, $x\in V(B)$, and $e$ is in reduced form.
\end{lemma}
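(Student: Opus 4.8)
The plan is to produce the decomposition directly from the connected components of $G[e]$ and then use the simpliciality of $x$ to force all but one of the resulting join factors to be single vertices. Since $e\in E(\overline{\con_r(G)})$, the induced subgraph $G[e]$ is disconnected; write its connected components as $C_1,\dots,C_m$ with $m\ge 2$ and $x\in V(C_1)$. Because distinct components of $G[e]$ have no edges between them in $G$, every cross pair is an edge of $\overline G$, so by Lemma~\ref{join-dis} the induced subgraph decomposes as $\overline G[e]=\overline G[V(C_1)]\vee\cdots\vee\overline G[V(C_m)]$, which is also the decomposition supplied by Lemma~\ref{chara}.

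The central step is to exploit that $x$ is simplicial in $\overline G$. In the join above, $x$ is adjacent to every vertex of $V(C_2)\cup\cdots\cup V(C_m)$, so this set is contained in $N_{\overline G}(x)$ and therefore induces a clique of $\overline G$. Combining the clique adjacencies inside $V(C_2)\cup\cdots\cup V(C_m)$ with the join adjacencies to $V(C_1)$, each vertex $v\in V(C_2)\cup\cdots\cup V(C_m)$ is adjacent in $\overline G[e]$ to every other vertex of $e$; equivalently $v$ is isolated in $G[e]$, so the component of $G[e]$ containing $v$ is the singleton $\{v\}$. Hence $C_2,\dots,C_m$ are all single vertices, and setting $D:=\overline G[V(C_2)\cup\cdots\cup V(C_m)]$ and $B:=\overline G[V(C_1)]$ gives a join decomposition $\overline G[e]=B\vee D$ in which $D$ is a clique, nonempty since $m\ge 2$, and $x\in V(B)$.

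It then remains to verify that this decomposition is in reduced form. Here I would again use Lemma~\ref{join-dis} in the form that a bipartition $\overline G[e]=B'\vee D'$ is a join precisely when there are no $G$-edges between $V(B')$ and $V(D')$, i.e.\ when each connected component of $G[e]$ lies entirely inside $V(B')$ or entirely inside $V(D')$. Suppose, for contradiction, that a proper refinement $\overline G[e]=B'\vee D'$ exists with $B'\subsetneq B$, $D'\supsetneq D$, and $x\in V(B')$. Since $V(B)=V(C_1)$ is a single component of $G[e]$, it cannot be split across $V(B')$ and $V(D')$; as $x\in V(C_1)\cap V(B')$, the whole of $V(C_1)$ lies in $V(B')$, forcing $V(D')\subseteq e\setminus V(C_1)=V(D)$ and contradicting $D'\supsetneq D$. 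Thus no proper refinement exists and $e$ is in reduced form.

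The step I expect to be the main obstacle is the passage from ``$x$ is simplicial in $\overline G$'' to ``$C_2,\dots,C_m$ are singletons'': this is exactly where the hypothesis on $x$ is essential, and it requires correctly combining the join adjacencies across components with the clique structure of $N_{\overline G}(x)$ to conclude that the offending vertices are universal in $\overline G[e]$. Once this is established, both the existence of a complete factor $D\neq\emptyset$ (guaranteed by $m\ge 2$) and the reducedness of the decomposition follow from the elementary correspondence between join factors of $\overline G[e]$ and unions of connected components of $G[e]$.
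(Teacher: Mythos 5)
Your proof is correct, and it takes a noticeably different route from the paper's. The paper starts from the abstract join decomposition $\overline{G}[e]=B_1\vee\cdots\vee B_m$ supplied by Lemma~\ref{chara}, lumps the factors not containing $x$ into $D_1$, uses simpliciality to see $D_1\subseteq N_{\overline{G}}(x)$ is complete, and then disposes of reducedness by an iterative ``shrink $B_1$, enlarge $D_1$'' process whose termination and preservation of completeness are left implicit. You instead build the decomposition canonically from the connected components $C_1,\dots,C_m$ of $G[e]$: the shared key step (join adjacency to $x$ plus the clique structure of $N_{\overline{G}}(x)$) is pushed further to show every vertex outside $V(C_1)$ is universal in $\overline{G}[e]$, hence isolated in $G[e]$, so $C_2,\dots,C_m$ are singletons. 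This yields the explicit reduced form $B=\overline{G}[V(C_1)]$, $D=\overline{G}[e\setminus V(C_1)]$, and reducedness then follows in one step from the correspondence between join bipartitions of $\overline{G}[e]$ and partitions of the components of $G[e]$: any $B'\vee D'$ with $x\in V(B')$ must absorb all of the connected set $V(C_1)$ into $V(B')$, so $B'\supseteq B$, ruling out $B'\subsetneq B$. What your approach buys is a fully rigorous, non-iterative treatment of reducedness together with the stronger structural fact that the reduced $B$ is exactly the $G[e]$-component of $x$ and $D$ consists of isolated vertices of $G[e]$; what the paper's approach buys is brevity and direct reuse of Lemma~\ref{chara}.
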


\begin{proof}
By Lemma~\ref{chara}, since $e \in E\!\big(\overline{\con_r(G)}\big)$, the induced subgraph 
$\overline{G}[e]$ admits a join decomposition
$\overline{G}[e]=B_1 \vee \cdots \vee B_m,~m\ge2.$
Assume without loss of generality that $x \in V(B_1)$, and set 
$D_1=\overline{G}[B_2 \vee \cdots \vee B_m].$
Because $x$ is simplicial in $\overline{G}$, its neighborhood forms a clique; hence every vertex of  
$B_2 \vee \cdots \vee B_m$ is adjacent to every other, so $D_1$ is complete.  
Thus
\[
\overline{G}[e]=B_1 \vee D_1,
\qquad x\in V(B_1),\ D_1 \text{ complete}.
\]

If this decomposition is already reduced, we take $B=B_1$ and $D=D_1$.  
Otherwise, we shrink $B_1$ to a proper induced subgraph $B\subsetneq B_1$ containing $x$ and enlarge $D_1$ to a corresponding induced supergraph $D\supsetneq D_1$, yielding another decomposition
$\overline{G}[e]=B\vee D$
with $x\in V(B)$ and $D$ complete.  
Hence $e$ is in reduced form.
\end{proof}

From Lemma~\ref{simp-join}, we derive additional properties of the decomposition $\overline{G}[e] = B \vee D$ when $e$ contains a simplicial vertex.  
These are summarized in the following observation.
\begin{obs}\label{obs-lemma}
    Let $G$ be a graph, $r \geq 1$, and $e \in E\big(\overline{\con_r(G)}\big)$ an edge containing a simplicial vertex $x$ of $\overline{G}$. By Lemma~\ref{simp-join}, we have a decomposition $\overline{G}[e] = B \vee D$, where
    $D$ is a complete graph, $x \in V(B)$, and $e$ is in reduced form.
     Then the following hold:
    \begin{enumerate}
        \item For any $z \in e$ with $\dd_{\overline{G}}(x,z)= 2$, we must have $z \in B$. 
        \begin{itemize}
            \item[Proof:] If $z \notin B$, then $z \in D$ and thus $\dd_{\overline{G}}(x,z) = 1$ (since $x$ is simplicial and adjacent to all vertices in $D$), contradicting the assumption.
        \end{itemize}
        
        \item For any $y \in N_{\overline{G}}(x) \cap B$, there exists $z \in B$ with $z \notin N_{\overline{G}}(y)$.
        \begin{itemize}
            \item[Proof:] If every $z \in B$ were adjacent to $y$, then we could move $y$ from $B$ to $D$, contradicting the reduced form condition.
        \end{itemize}
    \end{enumerate}
\end{obs}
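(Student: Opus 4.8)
The plan is to prove both assertions by contradiction, drawing everything from two features of the decomposition $\overline{G}[e] = B \vee D$ supplied by Lemma~\ref{simp-join}. First, the join structure gives a partition $e = V(B) \sqcup V(D)$ together with the fact that every vertex of $B$ is adjacent in $\overline{G}[e]$ to every vertex of $D$; since $\overline{G}[e]$ is an \emph{induced} subgraph of $\overline{G}$, every such adjacency lifts to an edge of $\overline{G}$. Second, the reduced-form condition furnishes a precise minimality: no vertex of $B$ that happens to be adjacent to all remaining vertices of $e$ can be transferred into $D$ while preserving completeness of the $D$-part and membership of $x$ in the $B$-part.

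For item~(1), I would suppose $z \in e$ satisfies $\dd_{\overline{G}}(x,z) = 2$ yet $z \notin B$. Because $V(\overline{G}[e])$ is partitioned as $V(B) \sqcup V(D)$, this forces $z \in D$. The join structure then makes $\{x,z\}$ an edge of $\overline{G}[e]$, as $x \in V(B)$ is joined to every vertex of $D$; being induced, $\overline{G}[e]$ carries this to $\{x,z\} \in E(\overline{G})$, so $\dd_{\overline{G}}(x,z) = 1$, contradicting the hypothesis. Hence $z \in B$.

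For item~(2), I would fix $y \in N_{\overline{G}}(x) \cap B$ and suppose, to the contrary, that every $z \in V(B) \setminus \{y\}$ lies in $N_{\overline{G}}(y)$. Together with the join relation, which already makes $y$ adjacent to all of $D$, this renders $y$ adjacent in $\overline{G}[e]$ to every other vertex of $e$. I would then exhibit the refinement forbidden by reduced form directly: set $B' = \overline{G}[V(B) \setminus \{y\}]$ and $D' = \overline{G}[V(D) \cup \{y\}]$. One checks that $D'$ is complete (as $D$ is complete and $y$ is adjacent to all of $D$), that $x \in V(B')$ (since $y \in N_{\overline{G}}(x)$ forces $y \neq x$), and that $\overline{G}[e] = B' \vee D'$ (each vertex of $B'$ is joined to all of $D$ by the original decomposition and to $y$ by the contradiction hypothesis). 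This gives $B' \subsetneq B$, $D' \supsetneq D$, and $x \in V(B')$, contradicting the reduced form of $e$; so a $z \in B$ with $z \notin N_{\overline{G}}(y)$ must exist.

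Both arguments are short, and I do not anticipate a serious obstacle; the delicate points are purely of bookkeeping. In item~(1) one must invoke that adjacency observed inside $\overline{G}[e]$ genuinely lifts to $\overline{G}$ because the subgraph is induced, so that the computed distance is truly $1$. In item~(2) the crux is to verify that moving $y$ from $B$ into $D$ yields a legitimate join decomposition meeting all three requirements of the reduced-form definition—completeness of $D'$, the membership $x \in V(B')$, and the full cross-adjacency witnessing $B' \vee D'$—for this is exactly the configuration reduced form excludes, and hence what powers the contradiction.
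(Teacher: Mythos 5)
Your proposal is correct and follows essentially the same route as the paper: item~(1) by locating $z$ in $D$ and using the cross-adjacency of the join to force $\dd_{\overline{G}}(x,z)=1$, and item~(2) by moving $y$ from $B$ into $D$ to produce the refinement forbidden by reduced form. Your write-up is merely more explicit (verifying $y\neq x$, completeness of $D'$, and the full cross-adjacency for $B'\vee D'$), and your use of the join structure in place of the paper's appeal to simpliciality of $x$ in item~(1) is an equivalent, if slightly cleaner, justification of the same adjacency.
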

The next lemma provides a decomposition principle for clutters containing a large complete join.  
It shows that, under mild hypotheses, one can recursively delete simplicial maximal subedges in a controlled manner until all edges containing a fixed subset $B$ are eliminated.

\begin{lemma}\label{clutter-decomposition}
Let $\mathcal{C}$ be a clutter satisfying:
\begin{enumerate}
    \item $E\!\bigl(\con_r(K_n \vee B)\bigr) \subseteq E(\mathcal{C})$, 
    where $B$ is a graph with $|B| = t \le r$, and 
    $\con_r(K_{n+t}) \cong \con_r(K_n \vee B) \neq \emptyset$;
    \item for every $r$-subset $e$ with $B \subseteq e$, the closed neighborhood satisfies 
    $N_{\mathcal{C}}[e] \subseteq K_n \vee B.$
\end{enumerate}
Then there exist $r$-subsets 
$e_1,\dots,e_\ell \subseteq V(K_n \vee B)$ such that:
\begin{itemize}
    \item $B \subseteq e_i$ for all $i$;
    \item $e_i \in \Sim\!\bigl(\mathcal{C} \setminus \{e_1,\dots,e_{i-1}\}\bigr)$ for $i=1,\ldots,l$;
    \item the clutter $\mathcal{C} \setminus \{e_1,\dots,e_\ell\}$ contains no edge that includes $B$.
\end{itemize}
\end{lemma}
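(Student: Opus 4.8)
The plan is to mimic the lexicographic simplicial-elimination argument used in the inductive step of Lemma~\ref{compl-chordal}, with the single pivot vertex there replaced by the fixed subset $B$. The first step is to pin down exactly which edges of $\mathcal{C}$ contain $B$. Given any edge $E\in E(\mathcal{C})$ with $B\subseteq E$, choose an $r$-subset $e\subseteq E$ with $B\subseteq e$ (possible since $|E|=r+1$ and $|B|=t\le r$); writing $E=e\cup\{w\}$, hypothesis (2) forces $w\in N_{\mathcal{C}}[e]\subseteq V(K_n\vee B)$ and also $e\subseteq N_{\mathcal{C}}[e]\subseteq V(K_n\vee B)$, so $E\subseteq V(K_n\vee B)$. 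Combined with hypothesis (1) and the fact that every $(r+1)$-subset of $V(K_n\vee B)$ is an edge of $\con_r(K_n\vee B)$ (any such subset meets $K_n$ and is therefore connected, so that $\con_r(K_n\vee B)\cong\con_r(K_{n+t})$ consists of all $(r+1)$-subsets), this shows that the edges of $\mathcal{C}$ through $B$ are precisely the sets $V(B)\cup g$ with $g\subseteq V(K_n)$ and $|g|=r+1-t$.

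Next I would label $V(K_n)=\{1,\dots,n\}$ and consider the $r$-subsets $e=V(B)\cup f$ with $f\subseteq\{1,\dots,n-1\}$ an $(r-t)$-subset, deliberately forbidding the top vertex $n$, ordered lexicographically by $f$ as $e_1,\dots,e_\ell$. Writing $D_i=\mathcal{C}\setminus\{e_1,\dots,e_{i-1}\}$ and $a^*=\max f_i$, the heart of the argument is the identity $N_{D_i}[e_i]=e_i\cup\{v\in V(K_n):v>a^*\}$. One direction uses hypotheses (1) and (2): for $v\in V(K_n)$ with $v>a^*$ the set $e_i\cup\{v\}$ is an edge of $\mathcal{C}$ that survives the earlier deletions, because any $f_j\subseteq f_i\cup\{v\}$ with $f_j\ne f_i$ must contain an element exceeding $a^*$ and omit one of $f_i$, forcing $\min(f_i\triangle f_j)\in f_i$ and hence $f_j>_{\mathrm{lex}}f_i$; the other direction, for $v\notin f_i$ with $v<a^*$, exhibits the explicit earlier subedge with $f$-part $(f_i\setminus\{a^*\})\cup\{v\}$, whose deletion already removed $e_i\cup\{v\}$. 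Since $a^*\le n-1$, the vertex $n$ lies in $N_{D_i}[e_i]$, so $e_i$ is a genuine maximal subedge; the degenerate case $t=r$ collapses to the single subedge $V(B)$, deleted first.

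Finally I would verify the clique condition and the termination claim. For any $(r+1)$-subset $S\subseteq N_{D_i}[e_i]$, if $B\not\subseteq S$ then no $e_j$ (each of which contains $B$) lies in $S$, while if $B\subseteq S$ the same symmetric-difference comparison shows that any $f_j\subseteq f_i\cup\{v:v>a^*\}$ with $f_j\ne f_i$ is lexicographically larger; in both cases $S$ survives to $D_i$, so $N_{D_i}[e_i]$ is a clique and $e_i\in\Sim(D_i)$. Removal is then immediate: for each edge $V(B)\cup g$, any $(r-t)$-subset $f$ of $g\setminus\{n\}$ (which exists because $|g\setminus\{n\}|\ge r-t$) gives a deleted subedge $V(B)\cup f\subseteq V(B)\cup g$, so no edge containing $B$ remains. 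I expect the main obstacle to be precisely the maximality bookkeeping at the lexicographic top: a subedge whose $f$-part already contains $n$ has no surviving extension and cannot serve as a maximal subedge, so one must verify, as above, that excluding these subedges is harmless because the retained deletions already eliminate every edge through $B$.
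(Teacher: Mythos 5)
Your proof is correct and takes essentially the same approach as the paper: lexicographic simplicial elimination of the $r$-subsets that contain $B$ and avoid the top vertex $x_n$ of $K_n$, using hypothesis (1) to certify that every $(r+1)$-subset of $V(K_n\vee B)$ is an edge and hypothesis (2) to confine all edges through $B$ to $V(K_n\vee B)$. Your single global lex order is exactly the paper's staged iteration over $x_1,\dots,x_n$ in disguise (the sets containing $x_1$ are lex-first, then those containing $x_2$ but not $x_1$, and so on), and your explicit identity $N_{D_i}[e_i]=e_i\cup\{v\in V(K_n):v>a^*\}$ simply makes precise what the paper leaves to ``the same argument as above.''
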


\begin{proof}
Suppose $|B|=r$. 
Set $e_1=B$. Now $N_\mathcal{C}[B]= K_n\vee B$, hence $B$ is simplicial.  
Since $B$ is the only $r$-subsets containing $B$, removing $e_1$ deletes all such edges.

Suppose $|B|<r$.
Let $V(K_n)=\{x_1<\cdots<x_n\}$. Define
\[
\mathcal{E}
=\{\,e\subseteq V(K_n\vee B): B\cup\{x_1\}\subseteq e,\ |e|=r,\ x_n\notin e\,\},
\]
ordered lexicographically as $\mathcal{E}=\{e_1,\dots,e_m\}$.  
Let $\mathcal{C}_i:=\mathcal{C}\setminus\{e_1,\dots,e_{i-1}\}$.

For each $i$:
$x_n\in N_{\mathcal{C}_i}[e_i].$
For any $(r+1)$-subset $S\in N_{\mathcal{C}_i}[e_i]$, condition (2) implies $S\subseteq K_n\vee B$.  
For $j<i$, lex order gives $\exists\, a_j\in e_j\cap V(K_n)$ with $a_j\notin e_i\cap V(K_n)$ and $a_j<\max(e_i\cap V(K_n))$, hence
$e_j\not\subseteq S \quad(j<i),$
so $S\in E(\mathcal{C}_i)$ and $e_i$ is simplicial maximal in $\mathcal{C}_i$.

Let $\mathcal{C}'=\mathcal{C}\setminus\{e_1,\dots,e_m\}$.  
Then $\mathcal{C}'$ contains no edge including $B\cup\{x_1\}$.  
If $\mathcal{C}'$ contains no edge including $B$, we stop.
Otherwise, construct additional $r$-subsets $e_{m+1},\dots,e_s$ with
$B\cup\{x_2\}\subseteq e_j,$
and remove them to obtain $\mathcal{C}''=\mathcal{C}'\setminus\{e_{m+1},\dots,e_s\}$, which contains no edge including $B\cup\{x_2\}$.
Iterating this procedure for $x_1,\dots,x_n$, at stage $k$ removing all $r$-subsets including $B\cup\{x_k\}$ but excluding $\{x_1,\dots,x_{k-1}\}$, we eventually reach a clutter with no edge including $B$. Each $e_i$ added is simplicial in the current clutter by the same argument as above.

Thus the resulting sequence $e_1,\ldots,e_t$ satisfies
\[
B \subseteq e_i,\qquad
e_i \in \Sim\!\bigl(\mathcal{C}\setminus\{e_1,\ldots,e_{i-1}\}\bigr)
\ \text{for } i=1,\ldots,t,
\]
and
$\mathcal{C}\setminus\{e_1,\ldots,e_t\} 
\ \text{contains no edge that includes } B.$
\end{proof}

\section{Co–Chordal Clutters}\label{main}
In this section, we examine three graph classes that play a central role in our study: co-chordal-cactus graphs, $(2K_2, C_4)$-free graphs, co-grid graphs.
We show that, for each of these families, the associated clutter $\mathcal{C}_r(G)$ is co-chordal for all \(r \ge 2\), highlighting a common structural pattern underlying these distinct graph classes.

\subsection{Co-Chordal-Cactus Graphs} 
In this subsection we study the class of co-chordal-cactus graphs.  
We begin by recalling the notion of a cactus graph, which provides the structural foundation for this family.
A graph $T$ is called a \emph{cactus graph} if every edge of $T$ lies in at most one cycle; equivalently, any two cycles in $T$ share at most one vertex.  
A cactus graph need not be connected, and each connected component is itself a cactus graph.  
Trees constitute the special case in which $T$ contains no cycles; thus cactus graphs generalize trees while permitting cycles with minimal overlap.

We now introduce one of the principal objects of this paper.

\begin{definition}\label{def:co-chordal-cactus}
A graph $G$ is a \emph{co-chordal-cactus graph} if its complement $\overline{G}$ admits a representation
$(T, \{H_e\}_{e\in E(T)})$
satisfying:
\begin{enumerate}
    \item $T$ is a cactus graph, referred to as the \emph{skeleton} of $\overline{G}$.
    \item For each $e\in E(T)$, the subgraph $H_e$ is an induced subgraph of $\overline{G}$ and is either a cycle of length $\ge3$ or a chordal graph.
    \item 
    $
        V(\overline{G})=\bigcup_{e\in E(T)} V(H_e),~
        E(\overline{G})=\bigcup_{e\in E(T)} E(H_e).$
    \item The family $\{H_e\}_{e\in E(T)}$ satisfies the intersection property:  
    for any distinct $e,f\in E(T)$,  
    \[
        |V(H_e)\cap V(H_f)|\le1,
    \]
    and the intersection is nonempty only when $e$ and $f$ share a common endpoint in $T$.
\end{enumerate}
\end{definition}
To illustrate Definition~\ref{def:co-chordal-cactus}, consider a cactus graph 
$T$ serving as the skeleton of $\overline{G}$. In Figure \ref{fig:cactus}, the skeleton $T$ 
is depicted with its edges labelled $e_1,\ldots,e_9$. 
To each edge $e_i$, we associate an induced subgraph $H_{e_i}$, where 
$H_{e_i}$ is either a chordal graph or a cycle of length at least $3$. 
These subgraphs are attached in accordance with the intersection condition in 
Definition~\ref{def:co-chordal-cactus}: for distinct edges $e_i$ and $e_j$, 
the subgraphs $H_{e_i}$ and $H_{e_j}$ intersect in at most one vertex, and 
such an intersection occurs only when $e_i$ and $e_j$ share a common endpoint 
in $T$.

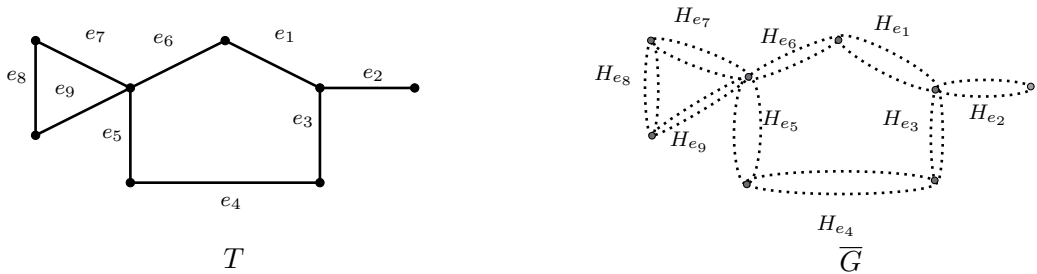
\begin{figure}[!h]
\centering
\definecolor{aqaqaq}{rgb}{0.6274509803921569,0.6274509803921569,0.6274509803921569}
\definecolor{wewdxt}{rgb}{0.43137254901960786,0.42745098039215684,0.45098039215686275}
\begin{tikzpicture}[scale=0.63]
%\clip(-2.87,-1.4) rectangle (33.91,16.32);
\draw [line width=1pt] (5,12)-- (3,11);
\draw [line width=1pt] (3,11)-- (3,9);
\draw [line width=1pt] (3,9)-- (7,9);
\draw [line width=1pt] (7,11)-- (7,9);
\draw [line width=1pt] (7,11)-- (5,12);
\draw [line width=1pt] (7,11)-- (9,11);
\draw [line width=1pt] (3,11)-- (1,12);
\draw [line width=1pt] (1,12)-- (1,10);
\draw [line width=1pt] (3,11)-- (1,10);
\draw [line width=1pt] (9.06,15.16)-- (9.06,15.16);
\draw [rotate around={-27.4567422845999:(18.9800000000001,11.53)},line width=1pt,dotted] (18.9800000000001,11.53) ellipse (1.1727844743844804cm and 0.23264441396799876cm);
\draw [rotate around={0:(18,9)},line width=1pt,dotted] (18,9) ellipse (2.014349939504152cm and 0.24001183051099373cm);
\draw [rotate around={88.95837332399012:(16.02,10.1)},line width=1pt,dotted] (16.02,10.1) ellipse (1.13580730465548cm and 0.2822379019695034cm);
\draw [rotate around={24.128402930267875:(16.999999999999822,11.63)},line width=1pt,dotted] (16.999999999999822,11.63) ellipse (1.0604335430288985cm and 0.13423598320095464cm);
\draw [rotate around={89.81764678080121:(20.017515513726707,9.99462697224972)},line width=1pt,dotted] (20.017515513726707,9.99462697224972) ellipse (1.0025862406392378cm and 0.12604100570737045cm);
\draw [rotate around={0.3110653738672393:(21.01034054397408,10.994626972248463)},line width=1pt,dotted] (21.01034054397408,10.994626972248463) ellipse (1.005002286684463cm and 0.17485676398759442cm);
\draw [rotate around={-20.501073599331022:(15.022426931006876,11.617708398272732)},line width=1pt,dotted] (15.022426931006876,11.617708398272732) ellipse (1.107753399808703cm and 0.18871644546490898cm);
\draw [rotate around={90:(14,11)},line width=1pt,dotted] (14,11) ellipse (1.0098485524937781cm and 0.1406915028542663cm);
\draw [rotate around={31.13864923891614:(15.022426931006814,10.617708398272706)},line width=1pt,dotted] (15.022426931006814,10.617708398272706) ellipse (1.1987606499678831cm and 0.10053159384507897cm);
\draw (4.75,7.84) node[anchor=north west] {$T$};
\draw (17.75,7.88) node[anchor=north west] {$\overline{G}$};
\begin{scriptsize}
\draw [fill=black] (5,12) circle (2.5pt);
\draw [fill=black] (3,11) circle (2.5pt);
\draw[color=black] (3.71,12.01) node {$e_6$};
\draw [fill=black] (3,9) circle (2.5pt);
\draw[color=black] (2.63,10.01) node {$e_5$};
\draw [fill=black] (7,9) circle (2.5pt);
\draw[color=black] (5.13,8.57) node {$e_4$};
\draw [fill=black] (7,11) circle (2.5pt);
\draw[color=black] (6.65,10.31) node {$e_3$};
\draw[color=black] (6.27,12.09) node {$e_1$};
\draw [fill=black] (9,11) circle (2.5pt);
\draw[color=black] (8.13,11.27) node {$e_2$};
\draw [fill=black] (1,12) circle (2.5pt);
\draw[color=black] (2.27,12.09) node {$e_7$};
\draw [fill=black] (1,10) circle (2.5pt);
\draw[color=black] (0.61,11.23) node {$e_8$};
\draw[color=black] (1.63,10.93) node {$e_9$};
\draw[color=black] (19.1,12.31) node {$H_{e_1}$};
\draw[color=black] (17.9,8.09) node {$H_{e_4}$};
\draw[color=black] (16.74,10.31) node {$H_{e_5}$};
\draw[color=black] (16.7,12.05) node {$H_{e_6}$};
\draw[color=black] (19.28,10.31) node {$H_{e_3}$};
\draw[color=black] (21.1,10.43) node {$H_{e_2}$};
\draw [fill=wewdxt] (16.044853862013746,11.235416796545477) circle (2pt);
\draw[color=black] (14.88,12.49) node {$H_{e_7}$};
\draw[color=black] (13.2,11.27) node {$H_{e_8}$};
\draw[color=black] (14.8,9.87) node {$H_{e_9}$};
\draw [fill=wewdxt] (17.942252460207463,12.009483759288702) circle (2pt);
\draw [fill=aqaqaq] (22.00020091670044,11.030063456265212) circle (2pt);
\draw [fill=wewdxt] (19.972869073563256,9.048457017734071) circle (2pt);
\draw [fill=wewdxt] (16.007544281408595,8.964709190813096) circle (2pt);
\draw [fill=wewdxt] (14.008684703345164,9.992077263527607) circle (2pt);
\draw [fill=wewdxt] (13.984063300367364,12.003348944709932) circle (2pt);
\draw [fill=wewdxt] (19.987485538768542,10.962104741581218) circle (2pt);
\end{scriptsize}
\end{tikzpicture}
\caption{A co-chordal-cactus graph.}
\label{fig:cactus}
\end{figure}

In Figure \ref{fig:cactus}, each edge $e_i$ of the skeleton is shown together with a dotted 
region representing the corresponding subgraph $H_{e_i}$. The union of all 
these subgraphs $\{H_{e_i}\}_{i=1}^{9}$ yields the complement graph 
$\overline{G}$, thereby determining the associated co-chordal-cactus graph $G$.

\begin{remark}\label{rem:examples-cochordalcactus}
The class of co-chordal-cactus graphs contains several familiar graph families:

\begin{enumerate}
    \item Every \emph{co-chordal graph} is co-chordal-cactus: its complement $\overline{G}$ may be viewed as a single chordal piece $H_e$ with a skeleton $T$ consisting of one edge.

    \item Every \emph{co-cycle graph} is co-chordal-cactus, where $T$ is a single edge and $H_e$ is an induced cycle of length at least three.

    \item If $\overline{G}$ is a \emph{block graph} or a \emph{cactus graph}, then $G$ is co-chordal-cactus.  
    In this case, each block or cycle of $\overline{G}$ plays the role of one $H_e$, and their pattern of intersections determines the cactus skeleton $T$.

    \item More generally, any graph $G$ whose complement $\overline{G}$ is obtained by gluing chordal graphs and cycles along at most one shared vertex at a time belongs to the class of co-chordal-cactus graphs.
\end{enumerate}

Hence, co-chordal-cactus graphs unify co-chordal and co-cycle graphs and extend them to a broader class organized by a cactus skeleton.
\end{remark}

To describe the structural relationships among the constituent components arising in the decomposition of a co-chordal-cactus graph, we introduce the following terminology.
These notions will be used throughout the paper to analyze how vertices interact across different components and to capture the cyclic structure encoded in the skeleton of $\overline{G}$.

\begin{definition}\label{def:terminology}
Let $G$ be a co-chordal-cactus graph, and let 
$(T,\{H_e\}_{e\in E(T)})$ be the decomposition of its complement 
$\overline{G}$ as in Definition~\ref{def:co-chordal-cactus}.  
We use the following terminology.

\begin{enumerate}
    \item Each subgraph $H_e$ is called a \emph{constituent component} of $\overline{G}$.

    \item A vertex $v\in V(\overline{G})$ is a \emph{connector vertex} if it belongs to at least two constituent components, i.e.,
    $\bigl|\{\,e\in E(T): v\in V(H_e)\,\}\bigr|\ge2.$
\end{enumerate}
\end{definition}

To analyze structural transformations within co-chordal-cactus graphs, it is often useful to separate the connections of a vertex shared among multiple constituent components.  
This process, referred to as \emph{splitting}, replaces such a connector vertex by distinct copies, each confined to one component of the decomposition.  
The following definition formalizes this construction.

\begin{definition}\label{def:splitting-general}
Let $G$ be a co-chordal-cactus graph with decomposition  
$(T,\{H_e\}_{e\in E(T)})$ of its complement $\overline{G}$ as in
Definition~\ref{def:co-chordal-cactus}.  
Let $x$ be a connector vertex of $\overline{G}$, and let
$H_{i_1},\dots,H_{i_m}$ be the distinct constituent components containing $x$,  
where $m\ge2$ and
$x\in V(H_{i_1})\cap\cdots\cap V(H_{i_m}).$
For each $j\in\{1,\dots,m\}$, define
\[
N_{i_j}
    = \{\,v\in V(H_{i_j})\setminus\{x\} : \{x,v\}\in E(\overline{G})\,\},
\]
the set of neighbors of $x$ lying in $H_{i_j}$.
The \emph{splitting of $x$} in $\overline{G}$ is the graph $\overline{G}^{\,x}$
obtained by replacing $x$ with new vertices
$y_{i_1},\dots,y_{i_m}$, 
each inheriting precisely the neighbors of $x$ from its corresponding component. Formally,
\[
\begin{aligned}
V(\overline{G}^{\,x})
    &= (V(\overline{G})\setminus\{x\})\cup\{y_{i_1},\dots,y_{i_m}\},\\
E(\overline{G}^{\,x})
    &= E\bigl(\overline{G}[\,V(\overline{G})\setminus\{x\}\,]\bigr)
       \,\cup\,\bigcup_{j=1}^m \{\{y_{i_j},v\}: v\in N_{i_j}\}.
\end{aligned}
\]
All other adjacencies of $\overline{G}$ remain unchanged.
\end{definition}

The following lemma establishes a structural property of co-chordal-cactus graphs, describing how the splitting of a connector vertex in the complement interacts with the clutter $\con_r(G)$ construction.  
This decomposition plays a central role in analyzing the recursive structure and chordality of the associated clutter.

\begin{lemma}\label{main-lemma}
Let $G$ be a co-chordal-cactus graph with decomposition 
$(T,\{H_e\}_{e\in E(T)})$ of $\overline{G}$, and let $x$ be a connector vertex of $\overline{G}$ which is a cut vertex.  
Then there exist edges $e_1,\dots,e_m \in \overline{\mathcal{C}_r(G)}$ such that:
\begin{enumerate}
    \item 
    $e_i \in \Sim\!\left(\overline{\mathcal{C}_r(G)} \setminus \{e_1,\dots,e_{i-1}\}\right)$ 
    for all $i=1,\dots,m$;
    
    \item 
    $\overline{\mathcal{C}_r(G)}\setminus\{e_1,\dots,e_m\}
        \;\cong\;
        \overline{\mathcal{C}_r\!\left(\,\overline{\overline{G}_S^{\,x}}\,\right)}$.
\end{enumerate}
\end{lemma}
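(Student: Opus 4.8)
The plan is to realize the vertex–splitting of $x$ purely at the level of the complement clutter, by deleting in a controlled order exactly those simplicial maximal subedges whose removal eliminates the edges of $\overline{\mathcal{C}_r(G)}$ that are ``held together'' by $x$. The organizing principle is the translation supplied by Lemma~\ref{chara} and Lemma~\ref{join-dis}: an $(r+1)$-set $e$ is an edge of $\overline{\mathcal{C}_r(G)}$ exactly when $\overline{G}[e]$ is a nontrivial join, equivalently when $G[e]$ is disconnected. I would first separate the edges $e$ with $x\in e$ into two types. If $e$ lies (apart from $x$) inside a single constituent component meeting $x$, then in the split graph $\overline{G}^{\,x}$ the relevant copy $y_{i_j}$ inherits precisely the neighbours of $x$ in that component, so $\overline{G}^{\,x}[(e\setminus\{x\})\cup\{y_{i_j}\}]=\overline{G}[e]$ and the edge persists. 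If instead $e$ meets two distinct components on opposite sides of the cut, then—because $x$ is a cut vertex, so there are no $\overline{G}$-edges between the two sides—the only vertex of $\overline{G}[e]$ adjacent to both sides is $x$; hence $x$ is a cut vertex of $\overline{G}[e]$, which for a join forces $\overline{G}[e]=\{x\}\vee B$ with $B=\overline{G}[e\setminus\{x\}]$ disconnected. These are exactly the \emph{bridging} edges, and they are precisely the ones destroyed by the splitting, since a copy $y_{i_j}$ no longer joins both sides. I would take $e_1,\dots,e_m$ to be the size-$r$ maximal subedges whose successive deletion removes precisely these bridging edges.

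Next I would order the candidate subedges lexicographically and argue their simpliciality exactly as in Lemma~\ref{clutter-decomposition} and in the proofs of Theorems~\ref{block-p4} and~\ref{cycle-chordal}. Writing $D_i=\overline{\mathcal{C}_r(G)}\setminus\{e_1,\dots,e_{i-1}\}$, I must verify that $N_{D_i}[e_i]$ is a clique of $D_i$, i.e.\ that every $(r+1)$-subset it contains is still an edge. The lexicographic choice guarantees that each previously deleted $e_j$ ($j<i$) contains a vertex outside $N_{D_i}[e_i]$, so no $(r+1)$-subset of $N_{D_i}[e_i]$ has already been removed; and Lemma~\ref{chara}, applied to each such subset, certifies that it still decomposes as a nontrivial join and therefore remains an edge of $D_i$. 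Together these give $e_i\in\Sim(D_i)$, which is conclusion (1).

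Finally, for conclusion (2) I would identify the residual clutter. After all deletions, the surviving edges are exactly those $(r+1)$-sets whose join in $\overline{G}$ does not use $x$ to cross between components. Under the component-wise correspondence $x\leftrightarrow y_{i_j}$ this matches, via Lemma~\ref{chara} applied to the split graph, precisely the edge set of $\overline{\mathcal{C}_r\!\bigl(\overline{\overline{G}_S^{\,x}}\bigr)}$, the extra copies $y_{i_j}$ never co-occurring in a single edge because they sit in different components of $\overline{G}^{\,x}$; this yields the claimed isomorphism. The main obstacle is the simpliciality verification together with the vertex bookkeeping of this last step: the delicate point is to keep the surviving edges containing $x$ (those inside a single component) cleanly apart from the deleted bridging edges, and to check that each $N_{D_i}[e_i]$ is a genuine clutter-clique. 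This is exactly where the cut-vertex hypothesis is indispensable—it forces every bridging join to have the rigid form $\{x\}\vee B$ and forbids surviving joins from spanning the cut—while the intersection property of Definition~\ref{def:co-chordal-cactus} ensures that the component-wise decomposition matches the splitting construction of Definition~\ref{def:splitting-general} without overlap.
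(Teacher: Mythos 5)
Your proposal is correct and takes essentially the same approach as the paper: identify the bridging edges $e'$ (those containing $x$ and meeting two sides of the cut, which the cut-vertex hypothesis forces to have the rigid form $\overline{G}[e']=\{x\}\vee B$), delete the $r$-subsets $e'\setminus\{x\}$, verify simpliciality via $N[e'\setminus\{x\}]=e'$, and identify the residual clutter with that of the split graph. The only cosmetic difference is your lexicographic ordering, which is unnecessary here—the paper deletes in arbitrary order, since $e_j\subseteq e_i\cup\{x\}$ with $x\notin e_j$ and $|e_j|=|e_i|$ already forces $e_j=e_i$—while your write-up actually justifies the final isomorphism in more detail than the paper's rather terse concluding step.
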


\begin{proof}
 Since $x$ is a cut vertex of $\overline{G}$, we have 
$\overline{G}\setminus\{x\}
=\bigsqcup_{i=1}^t \bigl(H_{e_i}\setminus\{x\}\bigr)$ for some constituent components $H_{e_1},\ldots,H_{e_t}$.
Let $e'\in E\bigl(\overline{\mathcal{C}_r(G)}\bigr)$ satisfy
\[
x\in e',\qquad 
e'\cap V(H_e)\ne\emptyset,\qquad
e'\cap V(H_f)\ne\emptyset,\qquad
e\neq f,   \ e,f\in\{e_1,\dots,e_t\}.
\]
Write $e=e'\setminus\{x\}$, and let $e_1',\dots,e_m'$ be all such edges; set  
$e_i=e_i'\setminus\{x\}$.  Define
\[
\mathcal{H}_0:=\overline{\mathcal{C}_r(G)},\qquad
\mathcal{H}_{i-1}:=\mathcal{H}_0\setminus\{e_1,\dots,e_{i-1}\}.
\]

\textsc{Claim.} $e_i\in\Sim(\mathcal{H}_{i-1})$ for all $i$.

\textit{Proof.}
The case $i=1$ is trivial.   Now
if $e_i\notin\Sim(\mathcal{H}_{i-1})$, then $\exists j<i$ with $e_j\subseteq e_i'$, hence
\[
e_j=e_j'\setminus\{x\}=e_i'\setminus\{x\}=e_i,
\]
contradiction.  
Thus
\[
N_{\mathcal{H}_{i-1}}[e_i]=e_i'\quad\Rightarrow\quad e_i\in\Sim(\mathcal{H}_{i-1}).
\]

After deleting $e_1,\dots,e_m$,
$\mathcal{H}_m
=
\overline{\mathcal{C}_r(G)}\setminus\{e_1,\dots,e_m\},$
which reflects the separation of $\overline{G}$ at $x$, so
$
\mathcal{H}_m
\cong
\overline{\mathcal{C}_r(\overline{\overline{G}_S^{\,x}})}.$
\end{proof}

We first establish the co-chordality of $\mathcal{C}_r(G)$ when the skeleton of the complement $\overline{G}$ is a path graph.  
This serves as the foundational case for more general skeleton structures.

\begin{theorem}\label{thm:path-cochordal}
Let $G$ be a co-chordal-cactus graph with decomposition $(T,\{H_e\}_{e\in E(T)})$ of $\overline{G}$ as in Definition~\ref{def:co-chordal-cactus}. If $T\cong P_n$, then $\mathcal{C}_r(G)$ is a co-chordal clutter.
\end{theorem}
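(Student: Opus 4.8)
The plan is to prove the statement by showing directly that the complement clutter $\overline{\mathcal{C}_r(G)}$ is chordal, exhibiting a simplicial elimination order and proceeding by induction on the number of constituent components of $\overline{G}$ (equivalently on $|E(T)|$, where $T\cong P_n$), for $r\ge2$. Throughout I would work with the join description of edges from Lemma~\ref{chara}: an $(r+1)$-set $S$ lies in $E(\overline{\mathcal{C}_r(G)})$ precisely when $\overline{G}[S]$ decomposes as a nontrivial join. This is essential because for $r\ge3$ one has only the strict inclusion $E(\overline{\mathcal{C}_r(G)})\subsetneq E(\mathcal{C}_r(\overline{G}))$ (Lemma~\ref{inclusion}), so I cannot simply identify the clutter with $\mathcal{C}_r(\overline{G})$ and must track join decompositions explicitly.

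For the inductive step I single out a terminal constituent component $H_{e_1}$ at one end of the path skeleton, together with its unique connector vertex $x=c_1$, which is a cut vertex of $\overline{G}$ separating $H_{e_1}$ from the remaining chain $R=H_{e_2}\cup\cdots\cup H_{e_{n-1}}$. Applying Lemma~\ref{main-lemma} to $x$ produces a sequence of simplicial maximal subedge deletions after which
\[
\overline{\mathcal{C}_r(G)}\setminus\{e_1,\dots,e_m\}\ \cong\ \overline{\mathcal{C}_r\!\bigl(\overline{\overline{G}^{\,x}}\bigr)},
\]
where the split graph $\overline{G}^{\,x}$ of Definition~\ref{def:splitting-general} is the disconnected graph $H_{e_1}\sqcup R$. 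The key observation is that, since $\overline{\overline{\overline{G}^{\,x}}}=H_{e_1}\sqcup R$ and a nontrivial join is connected, Lemma~\ref{chara} forces every edge of this clutter to lie within a single component; hence the clutter splits as the disjoint union $\overline{\mathcal{C}_r(\overline{H_{e_1}})}\sqcup\overline{\mathcal{C}_r(\overline{R})}$ on disjoint vertex sets. A disjoint union of clutters is chordal if and only if each summand is, because a simplicial maximal subedge of one summand remains one in the union: its closed neighborhood and the relevant clique condition are unaffected by the other summand. Thus it remains to handle $\overline{\mathcal{C}_r(\overline{R})}$, which is chordal by the induction hypothesis since $R$ is again a co-chordal-cactus graph with path skeleton $P_{n-1}$, and the single-component clutter $\overline{\mathcal{C}_r(\overline{H_{e_1}})}$.

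The base case is therefore a single constituent component $H$, either a cycle or a chordal graph. If $H\cong C_m$, then for $r\ge3$ one checks via Lemma~\ref{chara} that $C_m[S]$ can be a nontrivial join only when $S$ is the whole cycle with $m\in\{3,4\}$, so the clutter is empty or a single edge and hence trivially chordal; for $r=2$ the equality in Lemma~\ref{inclusion} identifies $\overline{\mathcal{C}_2(\overline{C_m})}$ with $\mathcal{C}_2(C_m)$, which is chordal by Theorem~\ref{cycle-chordal}. If $H$ is chordal, I would argue by a secondary induction on $|V(H)|$: choose a simplicial vertex $x$ of $H$; every edge $e\ni x$ of the clutter admits the reduced-form decomposition $\overline{G}[e]=B\vee D$ with $D$ complete and $x\in V(B)$ guaranteed by Lemma~\ref{simp-join} and Observation~\ref{obs-lemma}, and Lemma~\ref{clutter-decomposition} then allows me to delete, as simplicial maximal subedges, exactly the edges containing each admissible $B$. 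Removing all edges through $x$ reduces the clutter to $\overline{\mathcal{C}_r(\overline{H\setminus x})}$ on the smaller chordal graph $H\setminus x$, completing the inner induction.

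I expect the main obstacle to be the chordal single-component case for $r\ge3$, where the clutter genuinely differs from $\mathcal{C}_r(\overline{H})$, so the elimination order must be built from the join data rather than read off from $\mathcal{C}_r(\overline{H})$. The delicate point is verifying the two hypotheses of Lemma~\ref{clutter-decomposition}, namely that the relevant closed neighborhoods stay confined to a complete-join region $K_n\vee B$, uniformly over all reduced-form types $B$ attached to the simplicial vertex $x$. A secondary bookkeeping difficulty is ensuring that the deletions coming from Lemma~\ref{main-lemma} account for all edges straddling the cut vertex, so that the residual clutter is exactly the disjoint union described above; this relies on the fact that $x$ is simultaneously a connector and a cut vertex, which forces every straddling edge to contain $x$.
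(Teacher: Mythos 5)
Your proposal is correct and follows essentially the same strategy as the paper's proof: an outer induction that splits the path skeleton at a connector cut vertex via Lemma~\ref{main-lemma}, combined with a single-constituent-component base case handled by Theorem~\ref{cycle-chordal} and Lemma~\ref{inclusion} (cycle case) and by a simplicial-vertex elimination built from Lemma~\ref{simp-join}, Observation~\ref{obs-lemma}, and Lemma~\ref{clutter-decomposition} (chordal case). The only cosmetic differences are that you peel off a terminal component and state explicitly that a vertex-disjoint union of chordal clutters is chordal, and that you remove all edges through the simplicial vertex in one pass rather than leaving the paper's residual decomposition $\mathcal{E}_1\sqcup\mathcal{E}_2$; both are minor reorganizations of the same argument.
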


\begin{proof}
We proceed by induction on $n = |V(T)|$.  
For the base case $n = 2$, let $H_e$ denote the unique constituent component of $\overline{G}$.

\textbf{Case 1: $H_e$ chordal.}  \\
If $\overline{\mathcal{C}_r(G)}$ is edgeless, done; assume $E(\overline{\mathcal{C}_r(G)})\ne\emptyset$.  
Let $x$ be simplicial in $\overline{G}$ and set
\[
\mathcal{B}=\{e\in E(\overline{\mathcal{C}_r(G)}):x\in e\},\quad 
A_e=\{z\in e:\dd_{\overline{G}}(x,z)=2\},\quad A=\bigcup_{e\in\mathcal{B}}A_e.
\]
By Lemma~\ref{simp-join}, for all $e\in\mathcal{B}$,
$\overline{G}[e]=B\vee D,\quad x\in B.$
Let $\{e_1',\dots,e_p'\}\subseteq\mathcal{B}$ be those with $e_i'\cap A\ne\emptyset$.  
Again Lemma~\ref{simp-join} yields
\[
\overline{G}[e_i']=B_i\vee C_i,\quad x\in B_i,\ \text{reduced},\ |B_i|\le r.
\]
Choose $k$ with $|B_k|=\max_i |B_i|$.

\textsc{Claim 1.}  
If $e\in E(\overline{\con_r(G)})$ and $B_k\subseteq e$, then $\overline{G}[e]=B_k\vee D$ for some complete graph $D$.

\textit{Proof.}  
Lemma~\ref{simp-join} gives $\overline{G}[e]=B\vee H$, $x\in B$, $H$ complete and $e$ is in reduced form.  
For $y\in B_k$:

(a) If $\dd_{\overline{G}}(x,y)=2$, then $y\in A$ and by Observation~\ref{obs-lemma}(1), $y\in B$.

(b) If $\dd_{\overline{G}}(x,y)=1$, Observation~\ref{obs-lemma}(2) gives $z\in B_k\cap A$ with $y\notin N_{\overline{G}}(z)$. If $y\in H$, then $y$ adjacent to $z$, contradiction; hence $y\in B$.

Thus $B_k\subseteq B$, and by maximality $B=B_k$. Set $D:=H$. \hfill$\square$

Let
\[
\mathcal{D}=\{f\in E(\overline{\con_r(G)}):B_k\subseteq f\}=\{f_1,\dots,f_q\},\quad 
\overline{G}[f_i]=B_k\vee D_i,\ D_i\ \text{complete}.
\]
Define $G_1 := D_1 \vee \cdots \vee D_q$.  
Then $\overline{G}[G_1]$ is a complete subgraph of $\overline{G}$.
For any $r$-subset $e'\subseteq G_1\vee B_k$ with $B_k\subseteq e'$, if $z'\in N_{\overline{\con_r(G)}}[e']\setminus e'$, then by \textsc{Claim 1},
\[
\overline{G}[e'\cup\{z'\}]=B_k\vee D_j,\quad \ \text{hence} \ z'\in D_j\subseteq G_1.
\]
Lemma~\ref{clutter-decomposition} yields $r$-subsets $e_{1,1},\dots,e_{1,n_1}$ such that each $e_{1,i}\in\Sim(\overline{\con_r(G)}\setminus\{e_{1,1},\dots,e_{1,i-1}\})$, each contains $B_k$, and
$\mathcal{C}'=\overline{\con_r(G)}\setminus\{e_{1,1},\dots,e_{1,n_1}\}$
has no edges containing $B_k.$
\vskip1mm
\noindent\textsc{Claim 2.}
There exist $r$-subsets $e_1,\dots,e_m$ such that:
\begin{enumerate}
\item 
$e_i\in\Sim\!\left(\overline{\con_r(G)}\setminus\{e_1,\dots,e_{i-1}\}\right) \ \text{for \ $i=1,\dots,m$}$;
\item 
$E\!\left(\overline{\con_r(G)}\setminus\{e_1,\dots,e_m\}\right)
=\mathcal{E}_1\;\coprod\;\mathcal{E}_2,
$
where
\[
\mathcal{E}_1=\Big\{\{a_1,\dots,a_{r+1}\}\in E(\con_r(\overline{G}\setminus x)):\ 
(\overline{G}\setminus x)[\{a_1,\dots,a_{r+1}\}]=B_1\vee\cdots\vee B_k,\ k\ge2\Big\},
\]
\[
\mathcal{E}_2=E\!\big(\con_r(N_{\overline{G}}[x])\big).
\]
\end{enumerate}

\vskip1mm
\noindent\textit{Proof of Claim 2.}
If $\mathcal{C}'$ has no $f$ with $x\in f$ and $f\cap A\ne\emptyset$, the claim is immediate.  
Otherwise let $f_1',\dots,f_q'$ be all such edges.  
By Lemma~\ref{simp-join},
\[
\overline{G}[f_i']=M_i\vee F_i,\qquad x\in F_i,\ |F_i|\le r \ \text{and each $f_i'$ \ is in reduced form}\ .
\]
Choose $F_k$ with $|F_k|=\max_i |F_i|$, so $|F_k|\le |B_k|$.
All edges of $\mathcal{C}'$ containing $F_k$ are of the form $F_k\vee E_j$ with $E_j$ complete and $x\in F_k$.  
Let these be $F_k\vee E_1,\dots,F_k\vee E_\ell$ and define
\[
\overline{G}[G_2]=\overline{G}[E_1\vee\cdots\vee E_\ell].
\] Also observe that $\overline{G}[G_2]$ is complete.
For any $r$-subsets $e\subseteq G_2\vee F_k$ with $F_k\subseteq e$ and any 
$z\in N_{\mathcal{C'}}[e]\setminus e$, we have $z\in G_2$.

For any $(r+1)$-subset $S\subseteq G_2\vee F_k$, $S$ is connected in $\overline{G}$ and satisfies a join decomposition.  

\textit{Claim:} $e_{1,i}\nsubseteq S$ for all $1\le i\le n_1$.

\textit{Subcase 1:} $\exists z\in A\cap B_k$ with $z\notin F_k$.  
Then $z\in e_{1,i}$ (since $B_k\subseteq e_{1,i}$) and $d_{\overline{G}}(x,z)=2$.  
If $z\in G_2$, then $z$ adjacent to $x$, contradiction.  
Thus $z\notin G_2\vee F_k$, hence $z\notin S$ and $e_{1,i}\nsubseteq S$.

\textit{Subcase 2:} $A\cap B_k\subseteq F_k$.  
Assume $e_{1,i}\subseteq S$.  
Then $B_k\subseteq S$.  
Since $B_k\ne F_k$, choose $y\in B_k\setminus F_k$; then $y\in N_{\overline{G}}(x)\cap B_k$.  
The previous argument forces $y\in G_2$.  
Thus $y\in N_{\overline{G}}(z)$ for all $z\in F_k\cap A$, and since $A\cap B_k\subseteq F_k$, we obtain $y\in N_{\overline{G}}(z)$ for all $z\in A\cap B_k$, contradicting Observation~\ref{obs-lemma}(2).  
Hence $e_{1,i}\nsubseteq S$ for all $i$.

By Lemma~\ref{clutter-decomposition}, $\exists\, e_{2,1},\dots,e_{2,n_2}$ such that
$\mathcal{C}'\setminus\{e_{2,1},\dots,e_{2,n_2}\}$
contains no edge including $F_k$,
$e_{2,i}\in\Sim\!\left(\mathcal{C}'\setminus\{e_{2,1},\dots,e_{2,i-1}\}\right).$

If no edge $h$ remains with $x\in h$ and $h\cap A\neq\emptyset$, then \textsc{Claim~2} holds.  
Otherwise, repeat the same procedure. After finitely many steps we obtain $r$-subsets
$e_{1,1},\dots,e_{p,n_p}$
such that
\[
E\!\left(\overline{\con_r(G)}\setminus\{e_{1,1},\dots,e_{p,n_p}\}\right)
=\mathcal{E}_1 \,\sqcup\, \mathcal{E}_2,
\]
where  
\[
\mathcal{E}_1=\Bigl\{\{a_1,\dots,a_{r+1}\}\subseteq V(\overline{G}\setminus\{x\}) :
\overline{G}[\{a_1,\dots,a_{r+1}\}]=B_1\vee\cdots\vee B_k,\ k\ge2\Bigr\},
\]
and  
$\mathcal{E}_2 = E\bigl(\con_r(N_{\overline{G}}[x])\bigr).$
Thus \textsc{Claim~2} follows.

\smallskip
We initially assume the existence of an edge $e\in E(\overline{\con_r(G)})$ with  
$e\cap A\neq\emptyset$.  
If no such edge exists, \textsc{Claim~2} is immediate.  
Likewise, if no edge containing $x$ exists, then \textsc{Claim~2} holds as well, in which case $\mathcal{E}_2=\emptyset$.

\textbf{Case 2: $H_e$ a cycle.} If $\overline{G}\cong C_n$, then for $r=2$ we have 
$E\bigl(\overline{\mathcal{C}_2(G)}\bigr)=E\bigl(\mathcal{C}_2(C_n)\bigr)$; hence, by Theorem~\ref{cycle-chordal}, the claim follows.  
For $r\ge3$, if $H_e\neq C_4$, then $E\bigl(\overline{\mathcal{C}_r(G)}\bigr)=\emptyset$, and the conclusion is immediate.  
When $H_e=C_4$, note that $|E(\overline{\con_3(C_4)})|=1$ and $E(\overline{\con_r(C_4)})=\emptyset$ for all $r\ge4$, so this case also poses no obstruction.  
Thus in every situation $\mathcal{C}_r(G)$ is co-chordal for all $r\ge2$, and the base case $n=2$ is established.

Assume the statement holds for all $P_k$ with $1\le k\le n$.  
Let 
$\overline{G}=(P_{n+1},\{H_e\}_{e\in E(P_{n+1})})$
be the decomposition corresponding to the skeleton $P_{n+1}$.  
Since $P_{n+1}$ is connected and acyclic, $\overline{G}$ contains a connector vertex $x$, which is necessarily a cut vertex.
 By Lemma~\ref{main-lemma}, splitting $x$ produces a graph whose associated clutter $\con_r(G)$ is co-chordal, because the splitting replaces the single path skeleton by a disjoint union of shorter paths; by the induction hypothesis, each such component yields a co-chordal $\con_r(G)$.  
Therefore $\con_r(G)$ is co-chordal for $\overline{G}$ as well, completing the proof.
\end{proof}

Next, we consider the case where the skeleton of the complement $\overline{G}$ is a cycle.  

\begin{theorem}\label{thm:cycle-cochordal}
Let $G$ be a co-chordal-cactus graph with decomposition $(T,\{H_e\}_{e\in E(T)})$ of $\overline{G}$ as in Definition~\ref{def:co-chordal-cactus}.  
If $T\cong C_n$, then $\con_r(G)$ is a co-chordal clutter for all $r\ge2$.
\end{theorem}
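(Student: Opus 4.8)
The plan is to reduce the cyclic skeleton to a path skeleton by splitting a single connector vertex, and then to invoke the already-established path case, Theorem~\ref{thm:path-cochordal}. Since $T\cong C_n$, each vertex of $T$ corresponds to a connector vertex of $\overline{G}$ shared by two consecutive constituent components. Fix one such connector vertex $x$, shared by $H_e$ and $H_f$, where $e,f$ are the two edges of $T$ incident to the chosen vertex. Splitting $x$ as in Definition~\ref{def:splitting-general} replaces $x$ by two copies $y_e\in H_e$ and $y_f\in H_f$; in the skeleton this severs the cycle at one vertex, so the skeleton of $\overline{G}_S^{\,x}$ is the path $P_{n+1}$. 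Writing $G'=\overline{\overline{G}_S^{\,x}}$, Theorem~\ref{thm:path-cochordal} then guarantees that $\con_r(G')$ is co-chordal.

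First I would establish the analog of Lemma~\ref{main-lemma} for the connector vertex $x$: there exist edges $e_1,\dots,e_m\in E(\overline{\con_r(G)})$ with $e_i\in\Sim(\overline{\con_r(G)}\setminus\{e_1,\dots,e_{i-1}\})$ for every $i$ and
\[
\overline{\con_r(G)}\setminus\{e_1,\dots,e_m\}\;\cong\;\overline{\con_r(G')}.
\]
The subsets to delete are the \emph{straddling} edges, i.e.\ those $S\in E(\overline{\con_r(G)})$ with $x\in S$ meeting both $V(H_e)\setminus\{x\}$ and $V(H_f)\setminus\{x\}$. Setting $e_i=S_i\setminus\{x\}$ for an enumeration $S_1,\dots,S_m$ of these edges, the verification that each $e_i$ is a simplicial maximal subedge of the successive clutter runs exactly as in Lemma~\ref{main-lemma}: if $e_j\subseteq S_i$ for some $j<i$, then $e_j=S_i\setminus\{x\}=e_i$, a contradiction; hence the closed neighborhood of $e_i$ equals the edge $S_i$, making $e_i$ simplicial.

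The main point to justify — and the only place where the cycle differs from the cut-vertex situation of Lemma~\ref{main-lemma} — is that deleting precisely these straddling edges produces $\overline{\con_r(G')}$, even though removing $x$ does \emph{not} disconnect $\overline{G}$. Here the cactus structure is decisive. By Lemma~\ref{chara}, every edge of $\overline{\con_r(G)}$ corresponds to a subset $S$ with $\overline{G}[S]$ a nontrivial join. If such an $S$ met both $V(H_e)\setminus\{x\}$ and $V(H_f)\setminus\{x\}$, then by conditions~(3)--(4) of Definition~\ref{def:co-chordal-cactus} any $u\in V(H_e)\setminus\{x\}$ and $w\in V(H_f)\setminus\{x\}$ are non-adjacent in $\overline{G}$, since the two components share only $x$ and distinct components contribute no cross edges. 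In a join $B\vee D$ non-adjacent vertices must lie in a common part, forcing all such $u,w$ into one part; the only vertex adjacent to vertices on both sides is $x$, so $x$ must occupy the opposite part and serve as the pivot of the join. Consequently every join-edge bridging the two components sharing $x$ passes through $x$, and the ``long way around the cycle'' produces no further bridging edges. Thus the straddling edges through $x$ are exactly the edges lost under the split, and the deletion yields $\overline{\con_r(G')}$. I expect this step to be the main obstacle, as it is precisely where the restrictive intersection pattern of the cactus decomposition must be exploited to rule out bridging join-edges that avoid $x$.

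Finally, after the deletions the clutter equals $\overline{\con_r(G')}$, whose skeleton is the path $P_{n+1}$, so $\con_r(G')$ is co-chordal by Theorem~\ref{thm:path-cochordal}; equivalently $\overline{\con_r(G')}$ is chordal. By the recursive definition of chordality, reattaching the simplicial eliminations $e_m,\dots,e_1$ shows that $\overline{\con_r(G)}$ is chordal as well, i.e.\ $\con_r(G)$ is co-chordal for all $r\ge2$. By \cite[Theorem~3.3]{BYZ17}, this yields that $I_r(G)$ has a linear resolution.
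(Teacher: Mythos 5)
Your strategy --- split one connector vertex $x$ of the cyclic skeleton, delete the straddling edges through $x$, and hand the resulting path-skeleton graph to Theorem~\ref{thm:path-cochordal} --- is exactly the argument the paper uses, but \emph{only} for $n\ge5$, and the step you yourself flag as the main obstacle is precisely where your proof breaks for $n=3$ and $n=4$. The claim that every vertex of $V(H_e)\setminus\{x\}$ is non-adjacent to every vertex of $V(H_f)\setminus\{x\}$, and that $x$ is the \emph{only} vertex of $\overline{G}$ adjacent to vertices on both sides, follows from condition (4) of Definition~\ref{def:co-chordal-cactus} only when the two sides are ``far apart'' in the skeleton, i.e.\ when $n\ge5$. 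For $n=4$, take $\overline{G}=C_4$ with each $H_e$ a single edge and connectors $x_1,x_2,x_3,x_4$: the opposite connector $x_3$ is adjacent to both $x_2\in V(H_{12})\setminus\{x_1\}$ and $x_4\in V(H_{41})\setminus\{x_1\}$, and $\{x_2,x_3,x_4\}$ induces the path $x_2x_3x_4=\{x_3\}\vee\{x_2,x_4\}$ in $\overline{G}$, hence is an edge of $\overline{\con_2(G)}$ that straddles both sides while avoiding $x_1$. Consequently $N[\{x_2,x_4\}]=\{x_1,x_2,x_3,x_4\}\neq S_i$, and deleting the subedge $\{x_2,x_4\}$ destroys $\{x_2,x_3,x_4\}$, which \emph{survives} in $\overline{\con_2(G')}$; so the remaining clutter is not $\overline{\con_r(G')}$ and your final identification fails. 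For $n=3$ the situation is worse: the two other connector vertices $x_a\in V(H_e)\setminus\{x\}$ and $x_c\in V(H_f)\setminus\{x\}$ both lie in the third component $H_g$ and may be adjacent there, so even your non-adjacency premise is false. Concretely, if $H_g$ is $K_4$ minus an edge on $\{x_a,x_c,z,z'\}$ (with $z\not\sim z'$) and $H_e,H_f$ are single edges, then the straddling subedge $\{x_a,x_c\}$ has $z,z',x$ in its closed neighborhood, yet $\{x,z,z'\}$ induces no edge of $\overline{G}$ and hence is not an edge of $\overline{\con_2(G)}$: the subedge is not even simplicial. (Note also that your simpliciality verification ``as in Lemma~\ref{main-lemma}'' only shows that $S_i$ is still an edge of the current clutter; showing $N[e_i]=S_i$ is exactly the bridging claim above, so these are the same gap, not two independent steps.)

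This is why the paper's proof is a three-way case analysis rather than a uniform reduction. For $n\ge5$ it runs your argument verbatim. For $n=4$ it proves simpliciality by a different mechanism: for $r\ge3$ it shows $x_3\notin N[e_i]$ because $e_i\cup\{x_3\}$ would contain an induced $P_4$ (namely $z\,x_1\,x_4\,x_3$) with no vertex of $\overline{G}$ adjacent to all four of its vertices, contradicting the join decomposition forced by Lemma~\ref{chara}; and for $r=2$ it concedes that $N[\{x_2,x_4\}]=\{x_1,x_2,x_3,x_4\}$ is strictly larger than the straddling edge but observes it is still a clique, so the elimination remains simplicial. For $n=3$ the splitting strategy is abandoned entirely: the paper performs explicit, hand-chosen simplicial eliminations (depending on whether a constituent component is a $4$-cycle, a longer cycle, or chordal, and on $r$) that break the cycles and reduce to a chordal complement. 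To repair your proposal you would have to supply separate arguments for $n=3$ and $n=4$ along these lines; as written, the blanket claim that ``the only vertex adjacent to vertices on both sides is $x$'' is false in exactly those cases, and with it both the simpliciality of the straddling subedges and the identification of the residual clutter with $\overline{\con_r(G')}$.
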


\begin{proof}
Assume throughout the proof
that inner-cycle lengths are preserved when replacing an edge of the skeleton by a chordal graph or cycle.

\textbf{Case $n=3$.}  
Let $H_1,H_2,H_3$ be the components corresponding to $C_3$.  
If all $H_i$ are chordal then $\overline{G}$ is chordal, hence the claim follows from Theorem~\ref{thm:path-cochordal}.  
Assume $\overline{G}$ is not chordal. Hence one of $H_1,H_2,H_3$  is cycle. Without loss of generality, let $H_3$ is a cycle. Let $V(C_3)=\{x_1,x_2,x_3\}$.

\medskip
\emph{Case 1: $H_3$ a 4-cycle.}  
Let $V(H_3)=\{x_1,x_3,x_4,x_5\}$.  

$\bullet$ \emph{$r=2$.}  
Choose $e_{1}=\{x_{1},x_{4}\}$.  
Since
$N_{\overline{\con_2(G)}}[e_{1}]
=\{x_{1},x_{3},x_{4},x_{5}\},$
we have $e_{1}\in \Sim(\overline{\con_2(G)})$.  
Let $\mathcal{C}=\overline{\con_2(G)}\setminus e_{1}$.  
Next choose $e_{2}=\{x_{5},x_{4}\}$.  
Because
$N_{\mathcal{C}}[e_{2}]
=\{x_{5},x_{4},x_{3}\},$
we also have $e_{2}\in \Sim(\mathcal{C})$.
Deleting $e_{1}$ and $e_{2}$ yields a graph $G'$ such that
$\overline{\con_2(G)}\setminus\{e_{1},e_{2}\}
\;\cong\;
\overline{\con_2(\overline{G'})},$
where
\[
E(G') 
\cong 
E\bigl(\overline{G}\setminus\{x_{5}\}\bigr)
\;\cup\;
\{\{x_{1},y_{1}\}\},
\qquad
V(G')
=
V\bigl(\overline{G}\setminus\{x_{5}\}\bigr)
\;\cup\;
\{y_{1}\}.
\]

$\bullet$ \emph{$r=3$.}  
Take $e_{1}=\{x_{1},x_{5},x_{4}\}$.  
Since
$N_{\overline{\con_3(G)}}[e_{1}]
=\{x_{1},x_{3},x_{4},x_{5}\},$
we have $e_{1}\in \Sim(\overline{\con_3(G)})$.  
Deleting $e_{1}$ produces the same graph $G'$ as in the case $r=2$, and
\[
\overline{\con_3(G)}\setminus\{e_{1}\}
\;\cong\;
\overline{\con_3(\overline{G'})}.
\]

$\bullet$ \emph{$r\ge4$.}  
No edge of $\overline{\con_r(G)}$ can contain any of the vertices $x_{1},x_{4},x_{5}$, since every such edge would necessarily contain an induced $P_{4}$, contradicting the join–decomposition constraints.  
Hence
\[
\overline{\con_r(G)}
\;\cong\;
\overline{\con_r(\overline{G'})}.
\]

\medskip
\emph{Case 2: $H_{3}$ a cycle of length $\ell \ge 5$.}  
Let $V(H_{3})=\{x_{1},x_{3},x_{4},\dots,x_{p}\}$.

\smallskip
\noindent$\bullet$ \emph{$r=2$, subcase $\ell=5$.}  
Choose
\[
e_{1}=\{x_{4},x_{6}\},\qquad 
e_{2}=\{x_{5},x_{3}\},\qquad
e_{3}=\{x_{5},x_{1}\}.
\]
Then
\[
e_{i}\in 
\Sim\!\left(
\overline{\C_{2}(G)}\setminus\{e_{1},\dots,e_{i-1}\}
\right)
\qquad (i=1,2,3),
\]
and
\[
E\!\left(\overline{\C_{2}(G)}\setminus\{e_{1},e_{2},e_{3}\}\right)
=
E\!\left(\overline{\con_{2}(G\setminus\{x_{5}\})}\right).
\]

\smallskip
\noindent$\bullet$ \emph{$r=2$, subcase $\ell\ge6$.}  
Choose
\[
e_{1}=\{x_{4},x_{6}\},\qquad 
e_{2}=\{x_{5},x_{3}\},\qquad
e_{3}=\{x_{5},x_{7}\}.
\]
Then
\[
e_{i}\in 
\Sim\!\left(
\overline{\C_{2}(G)}\setminus\{e_{1},\dots,e_{i-1}\}
\right)
\qquad (i=1,2,3),
\]
and again
\[
E\!\left(\overline{\C_{2}(G)}\setminus\{e_{1},e_{2},e_{3}\}\right)
=
E\!\left(\overline{\con_{2}(G\setminus\{x_{5}\})}\right).
\]

$\bullet$ \emph{$r \ge 3$.}  
No edge of $\overline{\con_r(G)}$ can contain the vertex $x_5$, since any such edge would necessarily include an induced $P_4$, violating the join–decomposition constraints.  
Consequently,
\[
E\bigl(\overline{\con_r(G)}\bigr)
=
E\bigl(\overline{\con_r(G\setminus\{x_5\})}\bigr),
\]
and the argument reduces to a smaller graph.
If either $H_1$ or $H_2$ is a cycle, then—as in the previous cases—the cycle can be broken by deleting suitable $r$-subsets.  
Thus there exist $r$-subsets $e_1,\dots,e_m$ such that  
\[
e_i \in 
\Sim\!\left(\overline{\con_r(G)}\setminus\{e_1,\dots,e_{i-1}\}\right)
\qquad (i=1,\dots,m),
\]
and
\[
\overline{\con_r(G)}\setminus\{e_1,\dots,e_m\}
=
\overline{\con_r(\overline{G''})},
\]
where $G''$ is a chordal graph—in fact, a unicyclic graph whose unique cycle is a triangle.

\medskip
\textbf{Case $n=4$.}  
Let $V(C_4)=\{x_1,x_2,x_3,x_4\}$, and let 
$H_{12}, H_{23}, H_{34}, H_{41}$ denote the corresponding constituent components of $\overline{G}$.  
Define
\[
\{e_1',\dots,e_m'\}
=
\Bigl\{
\, e' \in E\bigl(\overline{\con_r(G)}\bigr)
\ \Big|\
x_1 \in e',\
e' \cap V(H_{12} \setminus \{x_1\}) \neq \emptyset,\
e' \cap V(H_{41} \setminus \{x_1\}) \neq \emptyset
\Bigr\}.
\]
For each $i$, set  
$e_i = e_i' \setminus \{x_1\}.$

\medskip
\noindent\textit{Case $r\ge3$.}
Let $e_i\subseteq V(\overline{G})$ be as above.  
If $x_2\notin e_i$, then $x_3\notin N_{\overline{\con_r(G)}}[e_i]$; otherwise $e_i\cup\{x_3\}$ would contain an induced $P_4$, which is impossible under the join–decomposition constraints.  
Assume now that $x_2,x_4\in e_i$.  
Since $e_i$ intersects both $H_{12}\setminus\{x_1\}$ and $H_{41}\setminus\{x_1\}$, there exists
\[
z\in V(H_{12}\setminus\{x_1\}) 
\quad\text{or}\quad 
z\in V(H_{41}\setminus\{x_1\});
\]
without loss of generality, take $z\in H_{12}\setminus\{x_1\}$.  
If $x_3\in N_{\overline{\con_r(G)}}[e_i]$, then the vertices 
$z,\ x_1,\ x_4,\ x_3$
form a shortest path $P_4 = z\,x_1\,x_4\,x_3$ in $\overline{G}$.  
Since $P_4$ cannot arise as a join of two graphs, and no vertex $w\in\overline{G}$ is adjacent to all of 
$z,x_1,x_4,x_3$, this contradicts the fact that
\[
\overline{G}[\,e_i\cup\{x_3\}\,] = B_1\vee\cdots\vee B_k,\qquad k\ge2.
\]
Thus $x_3 \notin N_{\overline{\con_r(G)}}[e_i]$.  
A symmetric argument shows that if $z\in N_{\overline{\con_r(G)}}[e_i]$, then necessarily $z=x_1$.

Let  
$\mathcal{C}
:= \overline{\con_r(G)} \setminus \{e_1,\dots,e_{i-1}\}.$
Then $N_{\mathcal{C}}[e_i]=e_i'$, and therefore
\[
E\bigl(\overline{\con_r(G)}\setminus\{e_1,\dots,e_m\}\bigr)
=
E\bigl(\overline{\con_r(\,\overline{\overline{G}^{\,x_1}}\,)}\bigr).
\]
By Theorem~\ref{thm:path-cochordal}, $\con_r(G)$ is co-chordal.
If instead $x_2\in e_i$ and $x_4\notin e_i$, or if $x_2,x_4\notin e_i$, the same reasoning applies verbatim and yields the same conclusion.

\noindent
\noindent\emph{Case $r=2$.}
If $x_2\notin e_i$, then necessarily $x_3\notin N_{\overline{\con_2(G)}}[e_i]$.  
If $x_2,x_4\in e_i$, then $e_i=\{x_2,x_4\}$ and
$N_{\overline{\con_2(G)}}[e_i]=\{x_1,x_2,x_3,x_4\}.$
In all remaining configurations,
$N_{\overline{\con_2(G)}}[e_i]=\{x_1\}.$
Thus every successive deletion is simplicial, and $\con_2(G)$ is co-chordal.

\medskip
\textbf{Case $n\ge5$.}
Let $H_1,\dots,H_n$ be the components of $\overline{G}$, and assume  
$V(H_1)\cap V(H_2)=\{x\}$.  
Let $e_1',\dots,e_m'$ be the edges of $\overline{\con_r(G)}$ satisfying
\[
x\in e_i', \qquad 
e_i'\cap V(H_1)\neq\emptyset, \qquad 
e_i'\cap V(H_2)\neq\emptyset.
\]
Set $e_i = e_i'\setminus\{x\}$. Then
\[
E\!\left(\overline{\con_r(G)}\setminus\{e_1,\dots,e_m\}\right)
=
E\!\left(\overline{\con_r\!\left(\overline{\,\overline{G}^{\,x}}\,\right)}\right),
\]
and each $e_i$ is simplicial:
\[
e_i \in 
\Sim\!\left(\overline{\con_r(G)}\setminus\{e_1,\dots,e_{i-1}\}\right)
\qquad (i=1,\dots,m).
\]

Deleting the connector vertex $x$ produces a smaller co-chordal-cactus graph whose skeleton is a path.  
By Theorem~\ref{thm:path-cochordal}, $\con_r(G)$ is therefore co-chordal.

Consequently, for all $n\ge3$ and $r\ge2$, the clutter $\con_r(G)$ is co-chordal.
\end{proof}

We now state one of the main results of this work, which establishes the co-chordality of the clutter associated with any co-chordal-cactus graph.

\begin{theorem}\label{thm:main-cochordal}
Let $G$ be a co-chordal-cactus graph. Then $\con_r(G)$ is a co-chordal clutter.
\end{theorem}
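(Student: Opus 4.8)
The plan is to prove Theorem~\ref{thm:main-cochordal} by induction on the number of edges of the cactus skeleton $T$ of $\overline{G}$, reducing the general cactus structure to the two special skeleton shapes already handled, namely paths (Theorem~\ref{thm:path-cochordal}) and cycles (Theorem~\ref{thm:cycle-cochordal}). The base cases are precisely these two theorems together with the trivial situation in which $T$ has a single edge, where $\overline{G}$ is a single constituent component that is either chordal or a cycle; in the chordal case Fr\"oberg-type reasoning (or Theorem~\ref{thm:path-cochordal} with $n=2$) applies, and in the cycle case Theorem~\ref{thm:cycle-cochordal} with $n$ equal to a single cycle applies. First I would recall that, since $T$ is a cactus, any two distinct constituent components meet in at most one vertex, and such a shared vertex is a \emph{connector vertex} (Definition~\ref{def:terminology}) which, whenever it separates $\overline{G}$, is a cut vertex.

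The inductive step is driven by the cut-vertex splitting technology of Lemma~\ref{main-lemma}. If $T$ is neither a single path nor a single cycle, then $T$ must contain a vertex of degree at least three or a cycle attached to further structure, and hence $\overline{G}$ possesses a connector vertex $x$ that is a cut vertex of $\overline{G}$. By Lemma~\ref{main-lemma}, there exist edges $e_1,\dots,e_m\in\overline{\con_r(G)}$ with $e_i\in\Sim(\overline{\con_r(G)}\setminus\{e_1,\dots,e_{i-1}\})$ for all $i$, such that
\[
\overline{\con_r(G)}\setminus\{e_1,\dots,e_m\}
\;\cong\;
\overline{\con_r\!\bigl(\,\overline{\overline{G}_S^{\,x}}\,\bigr)},
\]
where $\overline{G}_S^{\,x}$ is the graph obtained by splitting $x$ as in Definition~\ref{def:splitting-general}. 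The key structural observation is that splitting a cut connector vertex $x$ replaces $\overline{G}$ by a disjoint union of co-chordal-cactus graphs, each of whose skeletons is a proper sub-cactus of $T$ with strictly fewer edges; the decomposition of $\overline{G}$ inherited by each piece still satisfies the axioms of Definition~\ref{def:co-chordal-cactus} because the intersection property is local to pairs of components sharing an endpoint, and splitting only severs the gluing at $x$ while leaving every $H_e$ intact. Since a $d$-uniform clutter on a disjoint vertex union is chordal precisely when each of its connected pieces is chordal, and the chordality of $\con_r$ of a disjoint union follows componentwise, the induction hypothesis applied to each smaller piece shows that $\con_r(\overline{\overline{G}_S^{\,x}})$ is co-chordal. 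Prepending the simplicial elimination sequence $e_1,\dots,e_m$ then certifies chordality of $\overline{\con_r(G)}$, i.e.\ co-chordality of $\con_r(G)$.

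The main obstacle I anticipate is verifying cleanly that splitting a cut connector vertex genuinely reduces the skeleton to smaller cacti and that the pieces remain co-chordal-cactus graphs with \emph{fewer skeleton edges}, so that the induction is well-founded. One must be careful that a connector vertex lies in several components simultaneously and that after splitting, each new vertex $y_{i_j}$ sits in exactly one component; the combinatorial bookkeeping of which $H_e$ attaches to which copy, and the claim that no new cycles or gluings are created, requires the intersection axiom (Definition~\ref{def:co-chordal-cactus}(4)) in an essential way. A further subtlety is that $T$ may contain cycles: when the cut vertex lies on a skeleton cycle, splitting it does not disconnect that cycle but rather opens it, so one should first apply Theorem~\ref{thm:cycle-cochordal} to dispose of a maximal skeleton cycle (reducing it to a path or eliminating it), and only then split at a tree-like connector. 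I would therefore organize the induction so that at each step one either removes a pendant component, opens a skeleton cycle via Theorem~\ref{thm:cycle-cochordal}, or splits a tree connector via Lemma~\ref{main-lemma}, in every case strictly decreasing $|E(T)|$ while preserving membership in the co-chordal-cactus class, until the skeleton collapses to a single path or cycle where the established theorems apply directly.
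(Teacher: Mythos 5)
Your proposal is correct and takes essentially the same approach as the paper: the paper's proof likewise treats path and cycle skeletons (Theorems~\ref{thm:path-cochordal} and~\ref{thm:cycle-cochordal}) as the terminal cases and disposes of every other skeleton by repeatedly splitting at a connector cut vertex via Lemma~\ref{main-lemma}. Your explicit induction on $|E(T)|$, with the attendant checks that each split piece remains a co-chordal-cactus graph with strictly fewer skeleton edges, simply makes precise the paper's terser iteration argument (``strictly reducing the co-chordal-cactus structure'').
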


\begin{proof}
Let $(T,\{H_e\}_{e\in E(T)})$ be the decomposition of $\overline{G}$ as in Definition~\ref{def:co-chordal-cactus}.  
If $T$ has no cut vertex, then $T$ is either a cycle or a single edge, and the claim follows from Theorems~\ref{thm:cycle-cochordal} and~\ref{thm:path-cochordal}.  
If $T$ has a cut vertex, then $\overline{G}$ contains a connector vertex.  
By repeated application of Lemma~\ref{main-lemma}, together with Theorems~\ref{thm:cycle-cochordal} and~\ref{thm:path-cochordal}, each splitting at a connector cut vertex preserves co-chordality while strictly reducing the co-chordal-cactus structure.  
Iterating this reduction eventually reaches a terminal configuration covered by the base cases, and hence $\con_r(G)$ is co-chordal.
\end{proof}

As an immediate consequence of Theorem~\ref{thm:main-cochordal}, we are able to verify \cite[Conjecture~6.1]{DRSV24}, which yields the following corollary.

\begin{corollary}
Let $G$ be a co-chordal graph. Then, for every integer $r \ge 1$, the clutter $\con_r(G)$ is co-chordal.
\end{corollary}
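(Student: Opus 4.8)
The plan is to realize an arbitrary co-chordal graph as a degenerate co-chordal-cactus graph and then invoke Theorem~\ref{thm:main-cochordal}, treating the quadratic case $r=1$ by a direct reduction to the classical graph-theoretic notion of chordality.

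First I would record, exactly as in Remark~\ref{rem:examples-cochordalcactus}(1), that a co-chordal graph $G$ is co-chordal-cactus: since $\overline{G}$ is chordal, take the skeleton $T$ to be a single edge $e$ and set $H_e=\overline{G}$ as the unique constituent component. Conditions (1)--(3) of Definition~\ref{def:co-chordal-cactus} hold by construction, and the intersection property (4) is vacuous because $T$ has only one edge. With this identification in place, Theorem~\ref{thm:main-cochordal} applies verbatim and yields that $\con_r(G)$ is co-chordal for every $r\ge2$.

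It then remains to settle $r=1$, which lies just outside the range directly covered by the co-chordal-cactus machinery, and this is the one step requiring genuine care. In this case $\con_1(G)$ is the $2$-uniform clutter whose edges are exactly the edges of $G$, so its complement clutter $\overline{\con_1(G)}$ is the $2$-uniform clutter on $V(G)$ whose edges are the non-edges of $G$, namely $\overline{G}$ itself. The crux is to reconcile the recursive clutter definition of chordality with the usual graph-theoretic one: for a $2$-uniform clutter the subedges $\sigma$ are single vertices $\{v\}$, the closed neighborhood $N_{\con}[\{v\}]$ coincides with the ordinary closed neighborhood $N[v]$, deleting $\{v\}$ removes precisely the edges incident to $v$, and a simplicial maximal subedge is a simplicial vertex. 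Hence a reduction of $\overline{\con_1(G)}$ to the empty clutter by simplicial deletions is exactly a perfect elimination ordering of $\overline{G}$, which by Dirac's theorem exists if and only if $\overline{G}$ is chordal. Since $G$ is co-chordal, $\overline{G}$ is chordal, so $\overline{\con_1(G)}$ is a chordal clutter and $\con_1(G)$ is co-chordal. Combining the two ranges gives co-chordality of $\con_r(G)$ for all $r\ge1$, thereby verifying \cite[Conjecture~6.1]{DRSV24}.
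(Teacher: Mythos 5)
Your proposal is correct and follows the paper's route for $r\ge2$: the paper likewise treats a co-chordal graph as a degenerate co-chordal-cactus graph (single-edge skeleton, one chordal constituent component, exactly as in Remark~\ref{rem:examples-cochordalcactus}(1)) and cites Theorem~\ref{thm:main-cochordal}, offering no further argument. Where you go beyond the paper is the explicit treatment of $r=1$: the paper states the corollary for all $r\ge1$ as an ``immediate consequence,'' even though its supporting machinery (in particular Theorem~\ref{thm:cycle-cochordal}) is formulated for $r\ge2$, so the case $r=1$ is left implicit. Your reduction --- identifying $\overline{\con_1(G)}$ with $\overline{G}$, observing that for $2$-uniform clutters maximal subedges are vertices, simplicial maximal subedges are simplicial vertices, and a simplicial elimination sequence is exactly a perfect elimination ordering --- correctly closes that gap via the classical characterization of chordal graphs (the simplicial-vertex lemma is Dirac's; the perfect-elimination-ordering equivalence is usually credited to Fulkerson--Gross, a minor attribution point). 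So your write-up is, if anything, more complete than the paper's.
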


\begin{remark}
An algebraic interpretation of \cite[Conjecture~6.1]{DRSV24} is also possible.  
If $G$ is co-chordal, then $I_r(G)=I_{\ind_r(G)}$ is vertex splittable for all $r\ge1$ by \cite[Theorem~3.12]{DRSV24}.  
A simplicial complex $\ind_r(G)$ is vertex decomposable iff its Alexander dual ideal $I_{\ind_r(G)^\vee}$ is vertex splittable \cite[Theorem~2.3]{MorKho16}.  
Assuming $\con_r(G)$ is a clutter and that the simplicial complex 
$\Delta(\overline{\con_r(G)})^\vee=\ind_r(G)^\vee$  
is vertex decomposable (where $\Delta(\cdot)$ denotes the clique complex of a clutter),  
\cite[Theorem~3.10]{Ni19} implies that $\overline{\con_r(G)}$ is chordal.
\end{remark}

\subsection{\texorpdfstring{$(2K_2, C_4)$-Free Graphs}{(2K2, C4)-Free Graphs}}

In this subsection, we establish the co-chordality property for another important class of graphs.  
Specifically, we show that $(2K_2, C_4)$-free graphs also give rise to co-chordal clutters.  
It is worth noting that when $r = 1$, a $(2K_2, C_4)$-free graph does not necessarily yield a co-chordal clutter $\con_1(G)$.  
However, for all $r \ge 2$, the associated clutter $\con_r(G)$ is co-chordal.

\begin{theorem}\label{2k2-c4-free}
Let $G$ be a $(2K_2,C_4)$-free graph. Then $\con_r(G)$ is a co-chordal clutter for all $r\ge2$.
\end{theorem}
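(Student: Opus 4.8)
The plan is to reduce the problem to showing that $\overline{\con_r(G)}$ is chordal, via the standard criterion that it suffices to exhibit a simplicial elimination ordering of the maximal subedges. By Lemma~\ref{inclusion} and Lemma~\ref{chara}, the edges of $\overline{\con_r(G)}$ are exactly those $(r{+}1)$-sets $e$ for which $\overline{G}[e]$ decomposes as a nontrivial join $B_1\vee\cdots\vee B_m$ with $m\ge2$; equivalently $G[e]$ is disconnected. The $(2K_2,C_4)$-free hypothesis is what I would exploit to control the structure of these join decompositions. The key observation is that in a $(2K_2,C_4)$-free graph $G$, a disconnected induced subgraph $G[e]$ is very constrained: since $G$ has no induced $2K_2$, at most one connected component of $G[e]$ can contain an edge, so $G[e]$ consists of a single connected piece together with a (possibly empty) set of isolated vertices. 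Dually, this means $\overline{G}[e]$ is the join of a complete graph (on the isolated vertices of $G[e]$) with the complement of that single nontrivial piece.

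First I would fix $r\ge2$ and assume $E(\overline{\con_r(G)})\ne\emptyset$, translating the $2K_2$-freeness into the structural statement above: every edge $e\in E(\overline{\con_r(G)})$ has $G[e]$ equal to one connected component $G[e']$ plus isolated vertices, so $\overline{G}[e]=\overline{G[e']}\vee K_s$ where $s=|e|-|e'|\ge1$ is the number of isolated vertices. Second, I would use $C_4$-freeness of $G$ to further restrict the connected part: since $\overline{G}$ is then $2K_2$-free in the complement sense, the cliques in $\overline{G}$ formed by the isolated-vertex sets behave predictably under deletion. The strategy is to set up a deletion order analogous to that in the proofs of Theorem~\ref{thm:path-cochordal} and Theorem~\ref{block-p4}: pick a suitable vertex $x$ (for instance a simplicial vertex of $\overline{G}$, or a vertex realizing an isolated vertex in some $G[e]$), collect all edges $e$ containing $x$ together with a vertex at distance $2$ from $x$ in $\overline{G}$, and show these can be deleted as simplicial maximal subedges in an appropriate order. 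Then I would argue that after removing all such edges the residual clutter is isomorphic to $\overline{\con_r(G\setminus x)}$ or to a smaller $(2K_2,C_4)$-free instance, setting up an induction on $|V(G)|$.

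The core technical step, which I expect to be the main obstacle, is verifying the \emph{simpliciality} of each candidate subedge: for a chosen $(r{-}1)$-subedge $\sigma=e_i'\setminus\{x\}$ one must show that $N_{\mathcal{C}}[\sigma]$ is a clique in the current deletion stage, i.e.\ that every $(r{+}1)$-superset of $N_{\mathcal{C}}[\sigma]$ is still an edge of the clutter. This is where the forbidden-subgraph hypotheses must be leveraged carefully: I would argue that if some superset failed to be an edge, then $G$ restricted to the relevant vertices would be connected, and tracing back the connectivity would force either an induced $2K_2$ or an induced $C_4$ in $G$, yielding a contradiction. Controlling the interaction between the complete ``isolated-vertex'' part $K_s$ and the single connected component across successive deletions, and ensuring the induction reduces cleanly (that $G\setminus x$ remains $(2K_2,C_4)$-free, which is immediate, and that the clutter isomorphism holds), is the delicate part of the bookkeeping. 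Once the simplicial elimination ordering is established and the base case (small $|V(G)|$, or $\overline{\con_r(G)}$ edgeless) is checked, Lemma~\ref{compl-chordal} handles any complete-graph pieces, the induction yields chordality of $\overline{\con_r(G)}$, and hence $\con_r(G)$ is co-chordal for all $r\ge2$.
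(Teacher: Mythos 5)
Your opening reduction and your structural observation are both fine: $2K_2$-freeness of $G$ does force every edge $e$ of $\overline{\con_r(G)}$ to satisfy $\overline{G}[e]=\overline{G[e']}\vee K_s$ with $s\ge1$ isolated vertices of $G[e]$ forming the complete factor, which is consistent with the join description of Lemma~\ref{chara}. But the proposal has a genuine gap at exactly the point you yourself flag as ``the main obstacle,'' and that gap is not bookkeeping --- it is the theorem. Your induction pivots on ``a suitable vertex $x$ (for instance a simplicial vertex of $\overline{G}$),'' and all the deletion machinery you want to borrow from Theorem~\ref{thm:path-cochordal} and Theorem~\ref{block-p4} (Lemma~\ref{simp-join}, Observation~\ref{obs-lemma}, the Claim~1/Claim~2 structure) is predicated on $x$ being simplicial in $\overline{G}$. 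For $(2K_2,C_4)$-free graphs such a vertex need not exist: take $G=C_5$, which is $(2K_2,C_4)$-free and self-complementary, so $\overline{G}\cong C_5$ has no simplicial vertex, yet $\overline{\con_2(G)}=\con_2(\overline{G})$ is a nonempty clutter whose chordality genuinely requires proof (in the paper this is Theorem~\ref{cycle-chordal}). Your fallback pivot (``a vertex realizing an isolated vertex in some $G[e]$'') does not rescue this: in $C_5$ every vertex qualifies, and you give no argument that the edges through such an $x$ admit a simplicial elimination order, nor that the residual clutter is $\overline{\con_r(G\setminus x)}$. Your hoped-for contradiction --- that a failure of simpliciality forces an induced $2K_2$ or $C_4$ --- cannot materialize here, because $C_5$ contains neither and still carries nontrivial edges in $\overline{\con_r(G)}$.

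The paper avoids a vertex-by-vertex induction altogether. It invokes the structure theorem for $(2K_2,C_4)$-free graphs (cited as \cite[Theorem~1.1]{BHP93}): $V(G)=V_1\sqcup V_2\sqcup V_3$ with $V_1$ independent, $V_2$ a clique, and $V_3$ either empty or inducing a $C_5$ that is complete to $V_2$ and anticomplete to $V_1$. In the complement this makes every vertex of $\overline{V_2}$ simplicial in $\overline{G}$ whenever $\overline{V_3}\neq\emptyset$, so the Theorem~\ref{thm:path-cochordal} machinery clears out that part; the residual $C_5$ piece is then eliminated by explicit, hand-checked simplicial deletion sequences, separately for $r=2$ (ten listed pairs), $r=3$, $r=4$, and $r\ge5$, with Lemma~\ref{compl-chordal} finishing off the complete-graph remnants. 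If you want to salvage your outline, the fix is to import this structure theorem: it simultaneously supplies the simplicial vertices your induction needs and isolates the single non-chordal obstruction ($C_5$) as a bounded configuration that can be verified explicitly for each $r$.
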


\begin{proof}
By the structural description of $(2K_2,C_4)$-free graphs \cite[Theorem~1.1]{BHP93}, $V(G)=V_1\sqcup V_2\sqcup V_3$ satisfies:
(i) $V_1$ independent,  
(ii) $V_2$ a clique,  
(iii) $V_3=\emptyset$ or $|V_3|=5$, in which case $V_3$ induces a $C_5$,  
(iv) if $V_3\neq\emptyset$, then for all $v_1\in V_1$, $v_2\in V_2$, $v_3\in V_3$:  
$\{v_1,v_3\}\notin E(G)$ and $\{v_2,v_3\}\in E(G)$.

Passing to the complement, $\overline{G}$ is also $(2K_2,C_4)$-free and admits the same partition, with:
(i$'$) $\overline{V_2}$ independent,  
(ii$'$) $\overline{V_1}$ a clique,  
(iii$'$) $\overline{V_3}=\emptyset$ or $|\overline{V_3}|=5$, inducing again a $C_5$,  
(iv$'$) if $\overline{V_3}\neq\emptyset$, then for all $v_1\in\overline{V_1}$, $v_2\in\overline{V_2}$, $v_3\in\overline{V_3}$:  
$\{v_1,v_3\}\in E(\overline{G})$ and $\{v_2,v_3\}\notin E(\overline{G})$.

If $\overline{V_3}=\emptyset$, then $\overline{G}$ is co\-chordal by Theorem~\ref{thm:path-cochordal}, hence $\con_r(G)$ is co-chordal.
 Hence assume $\overline{V_3}\neq\emptyset$.  
Then (iv$'$) implies every $x\in\overline{V_2}$ is simplicial in $\overline{G}$. Using the method from Theorem~\ref{thm:path-cochordal}, there exists a sequence $e_1,\dots,e_t\in E(\overline{\C_r(G)})$ such that
\[
\begin{aligned}
E\!\left(\C_r(\overline{G})\setminus\{e_1,\dots,e_t\}\right)
=\;&
\Bigl\{\{a_1,\dots,a_{r+1}\}\in E(\C_r(\overline{G})):\,
\overline{G}\setminus\overline{V_2}[\{a_1,\dots,a_{r+1}\}]  \\
&=\, B_1\vee\dots\vee B_m,\ m\ge2 \Bigr\}.
\end{aligned}
\]
Let $\overline{V_3}=\{1,2,3,4,5\}$ in cyclic order. We now treat cases by $r$.

\medskip
\noindent\textit{Case 1: $r=2$.}  
Remove the following ten simplicial edges in order:
\[
\begin{aligned}
&e_{t+1}=\{1,3\},\ e_{t+2}=\{1,4\},\ e_{t+3}=\{2,4\},\ e_{t+4}=\{2,5\},\ e_{t+5}=\{3,5\},\\
&e_{t+6}=\{1,2\},\ e_{t+7}=\{2,3\},\ e_{t+8}=\{3,4\},\ e_{t+9}=\{4,5\},\ e_{t+10}=\{1,5\}.
\end{aligned}
\]
After these deletions, any remaining edge $e$ satisfies $|e\cap\overline{V_3}|\le1$.  
Since $\overline{G}[\{1\}\cup\overline{V_1}]$ is complete and all its edges lie in $\overline{\C_2(G)}$, Lemma~\ref{compl-chordal} yields edges $e_{t+11},\dots,e_s$ eliminating all edges containing $1$. Repeating for $2,3,4,5$ produces $e_{s+1},\dots,e_p$ so that
$E\!\left(\overline{\C_2(G)}\setminus\{e_1,\dots,e_p\}\right)=E(\C_2(\overline{V_1})).$
As $\overline{V_1}$ induces a clique, another application of Lemma~\ref{compl-chordal} gives $e_{p+1},\dots,e_q$ with
$E(\overline{\C_2(G)}\setminus\{e_1,\dots,e_q\})=\emptyset$.
Thus $\C_2(G)$ is co-chordal.

\medskip
\noindent\textit{Case 2: $r=3$.}
Let $V(\overline{V_1})=\{6,\dots,n\}$.  
Let $\mathcal{E}=\{e_1<\cdots<e_m\}$ be the collection of all $3$-subsets 
$e_i$ with $1\in e_i$ and $n\notin e_i$, ordered lexicographically.
 Suppose $\{a,b,c\}\in \mathcal{E}$.
If $z\in N_{\overline{\C_3(G)}}(\{a,b,c\})$ for some $a,b,c\in\{1,\dots,5\}$, then  
$z\notin\overline{V_3}$; otherwise the induced subgraph on $\{a,b,c,z\}$ would contain a $P_4$, which cannot arise from a join decomposition.  
Proceeding as in Lemma~\ref{compl-chordal}, there exists an index $s'$ such that  
$\overline{\C_3(G)}\setminus\{e_1,\dots,e_{s'}\}$ contains no edge involving the vertex $1$, and
\[
e_i\in \Sim\!\left(\overline{\C_3(G)}\setminus\{e_1,\dots,e_{i-1}\}\right)
\qquad (i=1,\dots,s').
\]
By repeating the same argument for the remaining vertices in $\{2,3,4,5\}$, we obtain additional simplicial edges  
$e_{s'+1},\dots,e_t$ such that  
$E\!\left(\overline{\C_3(G)}\setminus\{e_1,\dots,e_t\}\right)
=E(\overline{V_2}).$

Finally, Lemma~\ref{compl-chordal} provides the last deletions  
$e_{t+1},\dots,e_p$, which remove all edges of $\overline{\C_3(G)}$.  
Thus $\C_3(G)$ is co-chordal.

\medskip
\noindent\textit{Case 3: $r=4$.}  
Same argument as \textit{Case~2.}

\medskip
\noindent\textit{Case 4: $r\ge5$.}
In this range we have 
$E\bigl(\overline{\C_r(G)}\bigr)=E\bigl(\C_r(K_n)\bigr)$ for some $n$.
Since $\C_r(K_n)$ is co-chordal by Lemma~\ref{compl-chordal}, the desired conclusion follows immediately.

Thus for all $r\ge2$, $\C_r(G)$ is co-chordal.
\end{proof}

\subsection{Co-Grid Graphs:}
In this subsection, we study the class of \emph{co-grid graphs}, which arise as complements of grid graphs.  
Grid graphs form a fundamental family in graph theory, constructed as Cartesian products of path graphs.  
We first recall the definition of a grid graph and illustrate it with a simple example.
The \emph{grid graph} $G_{n,m} = P_n \square P_m$ is the Cartesian product of two path graphs $P_n$ and $P_m$. 
Its vertex set is 
$V(G_{n,m}) = \{\, x_{i,j} \mid 1 \le i \le n,\; 1 \le j \le m \,\},$
and two vertices $x_{i,j}$ and $x_{k,\ell}$ are adjacent if and only if
$|i - k| + |j - \ell| = 1.$ An example of the grid graph $G_{3,4}$ is illustrated in Figure~\ref{fig:gridgraph}.

\begin{figure}[!h]
\centering
\begin{tikzpicture}[scale=0.85]
%\clip(4.490523415977962,6.405840220385653) rectangle (23.179228650137745,16.168925619834656);
\draw [line width=1pt] (8,11)-- (9,11);
\draw [line width=1pt] (9,11)-- (10,11);
\draw [line width=1pt] (10,11)-- (11,11);
\draw [line width=1pt] (8,11)-- (8,10);
\draw [line width=1pt] (8,10)-- (9,10);
\draw [line width=1pt] (9,11)-- (9,10);
\draw [line width=1pt] (10,11)-- (10,10);
\draw [line width=1pt] (9,10)-- (10,10);
\draw [line width=1pt] (11,11)-- (11,10);
\draw [line width=1pt] (10,10)-- (11,10);
\draw [line width=1pt] (8,10)-- (8,9);
\draw [line width=1pt] (9,10)-- (9,9);
\draw [line width=1pt] (10,10)-- (10,9);
\draw [line width=1pt] (11,10)-- (11,9);
\draw [line width=1pt] (8,9)-- (9,9);
\draw [line width=1pt] (9,9)-- (10,9);
\draw [line width=1pt] (10,9)-- (11,9);
\begin{scriptsize}
\draw [fill=black] (8,11) circle (2.5pt);
\draw[color=black] (8.077300275482095,11.381046831955883) node {$x_{1,1}$};
\draw [fill=black] (9,11) circle (2.5pt);
\draw[color=black] (9.069035812672178,11.403085399448997) node {$x_{1,2}$};
\draw [fill=black] (10,11) circle (2.5pt);
\draw[color=black] (10.038732782369147,11.403085399448997) node {$x_{1,3}$};
\draw [fill=black] (11,11) circle (2.5pt);
\draw[color=black] (11.019449035812674,11.414104683195553) node {$x_{1,4}$};
\draw [fill=black] (8,10) circle (2.5pt);
\draw[color=black] (7.713663911845732,10.334214876033023) node {$x_{2,1}$};
\draw [fill=black] (9,10) circle (2.5pt);
\draw[color=black] (8.6833608815427,10.367272727272692) node {$x_{2,2}$};
\draw [fill=black] (10,10) circle (2.5pt);
\draw[color=black] (9.653057851239671,10.367272727272692) node {$x_{2,3}$};
\draw [fill=black] (11,10) circle (2.5pt);
\draw[color=black] (10.688870523415979,10.378292011019248) node {$x_{2,4}$};
\draw [fill=black] (8,9) circle (2.5pt);
\draw[color=black] (7.647548209366393,9.32044077134983) node {$x_{3,1}$};
\draw [fill=black] (9,9) circle (2.5pt);
\draw[color=black] (8.65030303030303,9.364517906336056) node {$x_{3,2}$};
\draw [fill=black] (10,9) circle (2.5pt);
\draw[color=black] (9.653057851239671,9.3534986225895) node {$x_{3,3}$};
\draw [fill=black] (11,9) circle (2.5pt);
\draw[color=black] (10.65581267217631,9.386556473829168) node {$x_{3,4}$};
\end{scriptsize}
\end{tikzpicture}
\caption{The grid graph \(G_{3,4}\).}
\label{fig:gridgraph}
\end{figure}
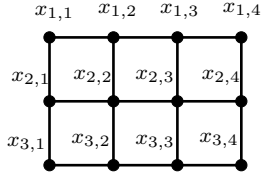

\begin{theorem}\label{thm:con-cochordal}
Let $G=\overline{G_{n,m}}$ with $n,m\ge2$. Then $\con_r(G)$ is a co-chordal clutter for all $r\ge2$.
\end{theorem}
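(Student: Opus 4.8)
The plan is to prove that $\overline{\con_r(G)}$ is chordal, which by definition is precisely co-chordality of $\con_r(G)$. Here $\overline{G}=G_{n,m}$ is the grid, so by Lemma~\ref{chara} the edges of $\overline{\con_r(G)}$ are exactly the $(r+1)$-subsets $S$ for which $G_{n,m}[S]$ is a nontrivial join $B_1\vee\cdots\vee B_t$ with $t\ge2$. The first step is to classify such $S$. Since the grid is triangle-free, each $B_i$ must be independent (an internal edge together with any vertex of another part would create a triangle), and $t\le2$ (three parts would force a triangle), so $G_{n,m}[S]$ is a complete bipartite graph $K_{a,b}$. I would then use the elementary metric fact that two vertices of a grid have at most two common neighbours to rule out every $K_{a,b}$ with $a,b\ge2$ except $K_{2,2}$, and the bound $\Delta(G_{n,m})\le4$ to rule out stars larger than $K_{1,4}$. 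This leaves exactly the stars $K_{1,k}$ with $k\le4$ and the squares $K_{2,2}=C_4$ (unit cells). In particular $E(\overline{\con_r(G)})=\emptyset$ once $r\ge5$, so $\con_r(G)$ is trivially co-chordal there, and only $r\in\{2,3,4\}$ require work.

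For the remaining cases I would exhibit an explicit simplicial elimination emptying $\overline{\con_r(G)}$, without any induction on $n,m$. When $r=4$ every edge is a ``cross'' $\{c\}\cup N_{\overline{G}}(c)$ about an interior vertex $c$; deleting the subedge $N_{\overline{G}}(c)$ works, since any three of the four neighbours of $c$ contain an in-line opposite pair whose only common neighbour is $c$, forcing $N_{\overline{\con_4(G)}}[N_{\overline{G}}(c)]=\{c\}\cup N_{\overline{G}}(c)$, a single-edge (hence clique) closed neighbourhood, so these deletions are simplicial in any order. When $r=2$ the edges are the induced $P_3$'s, and I would delete their non-adjacent endpoint pairs: an in-line pair $\{x_{i,j},x_{i,j+2}\}$ has the unique common neighbour $x_{i,j+1}$ and is simplicial unconditionally, while a diagonal pair has two common neighbours forming the opposite diagonal of a unit square and is simplicial precisely as long as that partner diagonal has not yet been deleted, so ordering the two diagonals of each square consistently removes all $P_3$'s. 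When $r=3$ the edge set splits into claws $K_{1,3}$ and squares $C_4$; the structural observations are that a claw's leaf-set is an independent triple whose unique common neighbour is the centre and which lies in no square, whereas every $3$-subset of a square is an ``L-path'' extending to a unique square. Hence each leaf-set and each L-path has a singleton closed neighbourhood (its own edge), so deleting all claw leaf-sets first and then one L-path per square is a valid simplicial elimination.

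The main obstacle is the classification step together with the $r=3$ bookkeeping. The classification is the conceptual heart: it collapses the a priori recursive co-chordality condition into a finite, fully local picture, and it rests entirely on the two metric features of grids (triangle-freeness and the ``at most two common neighbours'' bound). The delicate part of the eliminations is showing that the chosen subedges have singleton closed neighbourhoods \emph{throughout} the process, i.e.\ that in the $r=3$ case the two edge-families are decoupled at the level of subedges (leaf-sets never sit inside a square, and L-paths are never independent triples, hence never a full claw leaf-set) so that claw-deletions and square-deletions do not interfere, and that in the $r=2$ case the partner-diagonal ordering is genuinely needed and is realizable (the partner relation is a matching on the square-diagonals, so the constraint is acyclic). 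I would verify each singleton-closed-neighbourhood claim by the common-neighbour count and then invoke the definition of chordality directly.
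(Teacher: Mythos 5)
Your classification step is correct and is genuinely cleaner than the paper's bookkeeping: since $\overline{G}=G_{n,m}$ is triangle-free, has maximum degree $4$, and any two of its vertices have at most two common neighbours, the edges of $\overline{\con_r(G)}$ are exactly the $(r+1)$-sets inducing a star $K_{1,k}$ ($k\le 4$) or a unit square $K_{2,2}$; in particular $\overline{\con_r(G)}=\emptyset$ for $r\ge5$. Your eliminations for $r=4$ (delete each cross's leaf-set) and for $r=3$ (all claw leaf-sets first, then one $3$-subset per square) are valid: the decoupling you invoke does hold, because independent triples lie in no square and $3$-subsets of squares are never independent.

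The $r=2$ case, however, contains a genuine gap, and it sits exactly where the difficulty of the theorem lies. Fix a unit square $p\sim q\sim s\sim t\sim p$ with diagonals $\sigma=\{p,s\}$ and $\sigma'=\{q,t\}$. The only edges of $\overline{\con_2(G)}$ containing $\sigma$ are the two paths $\{p,q,s\},\{p,t,s\}$, and the only edges containing $\sigma'$ are $\{q,s,t\},\{q,p,t\}$; under your scheme these four edges can only ever be removed by deleting $\sigma$ or $\sigma'$, since in-line pairs are not subsets of them and you never delete adjacent pairs. So both diagonals of every unit square must eventually be deleted. But after deleting $\sigma$, the edges $\{q,s,t\}$ and $\{q,p,t\}$ survive (neither contains $\sigma$), so the closed neighbourhood of $\sigma'$ in the reduced clutter is still all of $\{p,q,s,t\}$ — and this set is no longer a clique, because its $3$-subsets $\{p,q,s\}$ and $\{p,s,t\}$ were just deleted. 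Symmetrically for the other order. So the precedence constraint is not acyclic as you claim: each matched pair $\{\sigma,\sigma'\}$ is a $2$-cycle ("each must precede the other"), and your elimination stalls with two paths per unit square that can never be removed simplicially. This is precisely why the paper's argument is not purely local: it peels the grid from a corner by induction on $n$ and $m$, deleting the in-line pair and the diagonal pair at $x_{1,1}$ and \emph{then} the adjacent pair $\{x_{1,1},x_{1,2}\}$, which has become a simplicial maximal subedge exactly because the earlier deletions shrank its closed neighbourhood to the single edge $\{x_{1,1},x_{1,2},x_{2,1}\}$. Any repair of your $r=2$ argument needs such a globally ordered use of adjacent pairs (equivalently, some induction or boundary-peeling order); the diagonal-only matching order cannot succeed.
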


\begin{proof}
Let $V(G)=\{x_{i,j}\mid 1\le i\le n,\ 1\le j\le m\}$.  
We first treat $n=2$, proving the claim for all $r\ge2$ by induction on $m$.
Suppose $m=2$.
Then $\overline{G}=C_4$.  
For $r=2$, apply Theorem~\ref{cycle-chordal} and Lemma~\ref{inclusion};  
for $r=3$, $|E(\overline{\con_3(G)})|=1$;  
for $r\ge4$, $\overline{\con_r(G)}=\emptyset$.  
Thus the result holds.

\medskip
\noindent\textit{Induction on $m$ for $n=2$.}  
Assume the theorem holds for $\overline{G_{2,k}}$ with $k\le m$, and set $G=\overline{G_{2,m+1}}$.

\smallskip
\noindent\emph{Case $r=2$.}  
Edges of $\overline{\con_2(G)}$ containing $x_{1,1}$ are
\[
\{x_{1,1},x_{1,2},x_{1,3}\},\ 
\{x_{1,1},x_{1,2},x_{2,1}\},\ 
\{x_{1,1},x_{1,2},x_{2,2}\},\ 
\{x_{1,1},x_{2,1},x_{2,2}\}.
\]
Choose $e_1=\{x_{1,1},x_{1,3}\}$, $e_2=\{x_{1,1},x_{2,2}\}$, $e_3=\{x_{1,1},x_{1,2}\}$, and 
put  
$\mathcal{C}=\overline{\con_2(G)}\setminus\{e_1,e_2,e_3\}$.  
Edges of $\mathcal{C}$ containing $x_{2,1}$ are  
$\{x_{2,1},x_{2,2},x_{2,3}\}$ and $\{x_{2,1},x_{2,2},x_{1,2}\}$.  
Choose $e_4=\{x_{2,1},x_{2,3}\}$ and $e_5=\{x_{2,1},x_{1,2}\}$. Observe that \[
e_i\in \Sim\!\left(\overline{\C_2(G)}\setminus\{e_1,\dots,e_{i-1}\}\right)
\qquad i=1,\dots,5.
\]
and 
$E(\overline{\con_2(G)}\setminus\{e_1,\dots,e_5\}) 
= E(\overline{\con_2(\overline{G_{2,m}})}),$
and induction yields co-chordality.

\smallskip
\medskip
\noindent\textit{Case $r=3$.}
The edges of $\overline{\con_3(G)}$ containing $x_{1,1}$ are
\[
\{x_{1,1},x_{1,2},x_{2,1},x_{2,2}\},\qquad
\{x_{1,1},x_{1,2},x_{1,3},x_{2,2}\}.
\]
Select
$e_1=\{x_{1,1},x_{1,3},x_{2,2}\}$,
$e_2=\{x_{1,1},x_{2,1},x_{2,2}\},$
and set \(\mathcal{C}=\overline{\con_3(G)}\setminus\{e_1,e_2\}\).  
Among the edges of \(\mathcal{C}\) containing \(x_{2,1}\) is
$\{x_{2,1},x_{2,2},x_{2,3},x_{1,2}\}.$
Choose 
$e_3=\{x_{2,1},x_{2,3},x_{1,2}\}.$
After deleting \(e_1,e_2,e_3\), we obtain
\[
E\!\left(\overline{\con_3(G)}\setminus\{e_1,e_2,e_3\}\right)
=
E\!\left(\overline{\con_3(\overline{G_{2,m}})}\right),
\] and \[
e_i\in \Sim\!\left(\overline{\C_3(G)}\setminus\{e_1,\dots,e_{i-1}\}\right)
\qquad i=1,2,3.
\]
so the induction hypothesis applies.

\smallskip
\noindent\emph{Case $r\ge4$.}  
Then $\overline{\con_r(G)}=\emptyset$ and the result is trivial.

\bigskip
\noindent\textit{Induction on $n$.}  
Assume the theorem valid for $\overline{G_{k,m}}$ with $k\le n$, and now take $G=\overline{G_{n+1,m}}$.

\medskip
\noindent\textit{Case $r=2$.}
The edges of $\overline{\con_2(G)}$ containing $x_{1,1}$ include
\[
\{x_{1,1},x_{1,2},x_{1,3}\},\ 
\{x_{1,1},x_{1,2},x_{2,2}\},\ 
\{x_{1,1},x_{1,2},x_{2,1}\},\
\{x_{1,1},x_{2,1},x_{2,2}\},\
\{x_{1,1},x_{2,1},x_{3,1}\}.
\]
Choose
\[
e_1=\{x_{1,1},x_{1,3}\},\quad
e_2=\{x_{1,1},x_{3,1}\},\quad
e_3=\{x_{1,1},x_{2,2}\},\quad
e_4=\{x_{1,1},x_{1,2}\},
\]
and set $\mathcal{C}=\overline{\con_2(G)}\setminus\{e_1,\dots,e_4\}$.
Then $\mathcal{C}$ contains no edge involving $x_{1,1}$.
Next consider the edges of $\mathcal{C}$ containing $x_{1,2}$:
$\{x_{1,2},x_{2,2},x_{2,1}\},\ 
\{x_{1,2},x_{2,2},x_{1,3}\},\ 
\{x_{1,2},x_{2,2},x_{2,3}\},\ 
\{x_{1,2},x_{2,2},x_{3,2}\},\ 
\{x_{1,2},x_{1,3},x_{1,4}\},\ 
\{x_{1,2},x_{1,3},x_{2,3}\}.$
Select
\[
e_5=\{x_{1,2},x_{1,4}\},\quad
e_6=\{x_{1,2},x_{3,2}\},\quad
e_7=\{x_{1,2},x_{2,1}\},\quad
e_8=\{x_{1,2},x_{2,3}\},\quad
e_9=\{x_{1,2},x_{1,3}\}.
\]
Each satisfies
$e_i\in\Sim\!\bigl(\overline{\con_2(G)}\setminus\{e_1,\dots,e_{i-1}\}\bigr),~i=1,\dots,9.$
After deleting $e_1,\dots,e_9$, no edge containing $x_{1,2}$ remains.
Repeating the same procedure for each vertex $x_{1,j}$ produces edges
$e_{10},\dots,e_m$ with
\[
e_i\in\Sim\!\bigl(\overline{\con_2(G)}\setminus\{e_1,\dots,e_{i-1}\}\bigr)
\quad\text{for all } i\le m,
\]
and
$E\bigl(\overline{\con_2(G)}\setminus\{e_1,\dots,e_m\}\bigr)
=
E\bigl(\overline{\con_2(\overline{G_{n,m}})}\bigr).$
The induction hypothesis then implies that $\overline{\con_2(G)}$ is chordal, completing the case.

\smallskip
\noindent\emph{Case $r=3$.}
The edges of $\overline{\con_3(G)}$ containing $x_{1,1}$ are
\[
\{x_{1,1},x_{1,2},x_{2,1},x_{2,2}\},\ 
\{x_{1,1},x_{1,2},x_{1,3},x_{2,2}\},\ 
\{x_{1,1},x_{2,1},x_{2,2},x_{3,1}\}.
\]
Choose
\[
e_1=\{x_{1,1},x_{1,3},x_{2,2}\},\qquad
e_2=\{x_{1,1},x_{2,2},x_{3,1}\},\qquad
e_3=\{x_{1,1},x_{1,2},x_{2,1}\},
\]
and set $\mathcal{D}=\overline{\con_3(G)}\setminus\{e_1,e_2,e_3\}$.
Then $\mathcal{D}$ contains no edge involving $x_{1,1}$.
Next consider the edges of $\mathcal{D}$ containing $x_{1,2}$:
\[
\{x_{1,2},x_{1,3},x_{2,2},x_{2,3}\},\ 
\{x_{1,2},x_{2,1},x_{2,2},x_{2,3}\},\ 
\{x_{1,2},x_{2,1},x_{2,2},x_{3,2}\},
\]
\[
\{x_{1,2},x_{2,2},x_{2,3},x_{3,2}\},\ 
\{x_{1,2},x_{1,3},x_{1,4},x_{2,3}\}.
\]
Select
\[
e_4=\{x_{1,2},x_{2,1},x_{3,2}\},\quad
e_5=\{x_{1,2},x_{3,2},x_{2,3}\},\quad
e_6=\{x_{1,2},x_{2,1},x_{2,3}\},
\]
\[
e_7=\{x_{1,2},x_{1,4},x_{2,3}\},\quad
e_8=\{x_{1,2},x_{1,3},x_{2,3}\}.
\]
After removing $e_1,\dots,e_8$, no remaining edge contains $x_{1,1}$ or $x_{1,2}$.
Repeating the same procedure for the remaining vertices $x_{1,j}$ produces edges
$e_9,\dots,e_m$ such that
\[
e_i \in \Sim\!\bigl(\overline{\con_3(G)}\setminus\{e_1,\dots,e_{i-1}\}\bigr)
\qquad\text{for all } i\le m,
\]
and
$E\bigl(\overline{\con_3(G)}\setminus\{e_1,\dots,e_m\}\bigr)
=
E\bigl(\overline{\con_3(\overline{G_{n,m}})}\bigr).$
By the induction hypothesis, $\overline{\con_3(G)}$ is chordal, completing the case.

\smallskip
\emph{Case $r=4$.}  
Observe that no edge of $\overline{\mathcal C_r(G)}$ contains $x_{1,1}$.
 Edges containing $x_{1,2}$ include  
$\{x_{1,2},x_{2,2},x_{2,3},x_{2,1},x_{3,2}\}$.  
Choose $e_1=\{x_{1,2},x_{2,1},x_{2,3},x_{3,2}\}$, simplicial; iterate as before to reduce to  
$\overline{\con_4(\overline{G_{n,m}})}$.

\smallskip
\emph{Case $r\ge5$.}  
No $r$-edge contains any $x_{1,j}$ with $1\le j\le m$, hence the result follows from the induction hypothesis.

Thus $\con_r(G)$ is co-chordal for all $n,m\ge2$ and $r\ge2$.
\end{proof}

\section{Regularity and Linearity Results}\label{reg-ls}

In this section we obtain regularity bounds and linearity results for ideals associated with several graph families.  
We show that for co-chordal-cactus graphs and co-grid graphs one has $\reg(I_1(G)) \le 3$, and the same bound is known for $(2K_2,C_4)$-free graphs.  
Moreover, for all these classes and every $r \ge 2$, the connected ideals $I_r(G)$ admit linear resolutions.  
We begin with the regularity of edge ideals of co-chordal-cactus graphs.

\begin{theorem}\label{thm:reg-co-cactus}
If $G$ is a co-chordal-cactus graph, then $\reg(I_1(G))\le 3$.
\end{theorem}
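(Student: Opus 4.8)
The plan is to convert the regularity bound into a combinatorial covering statement and then exploit the cactus decomposition of $\overline{G}$. Writing $\reg(I_1(G)) = \reg(R/I_1(G)) + 1$, it suffices to prove $\reg(R/I_1(G)) \le 2$. By Woodroofe's inequality $\reg(R/I(G)) \le \cochord(G)$, where $\cochord(G)$ denotes the co-chordal cover number (the least number of co-chordal subgraphs $G_i$, i.e.\ subgraphs with $\overline{G_i}$ chordal, whose union covers $E(G)$), the entire theorem reduces to showing $\cochord(G) \le 2$. Passing to complements, this is equivalent to expressing $H := \overline{G}$ as the edge-wise intersection $H = D_1 \cap D_2$ of two chordal graphs $D_1,D_2$ on $V(G)$; equivalently, $H$ admits two chordal completions whose sets of added (fill-in) edges are disjoint.

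Next I would reduce the problem to the $2$-connected blocks of $H$. The elementary fact I would use is that a graph is chordal if and only if each of its blocks is chordal. Consequently, if every block $B$ of $H$ can be written as $B = D_1^B \cap D_2^B$ with $D_1^B, D_2^B$ chordal and all fill-in edges lying inside $V(B)$, then setting $D_i = \bigcup_B D_i^B$ produces two chordal graphs with $D_1 \cap D_2 = H$: distinct blocks meet in at most a cut vertex, so no fill-in edge is shared across blocks, and within each block the intersection recovers $B$. Thus it is enough to realize the complement of each block of $H$ with co-chordal cover number at most $2$.

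I would then classify the blocks using the decomposition $(T,\{H_e\})$. When the skeleton $T$ is a tree, every block of $H$ is a $2$-connected piece of a chordal constituent $H_e$ (already chordal, so with empty fill-in) or a cycle constituent $C_\ell$. A cycle $C_\ell$ with $\ell \ge 4$ is handled by the two \emph{fan} triangulations anchored at two adjacent vertices $u,v$: one adds only chords incident to $u$, the other only chords incident to $v$, so the two fill-in sets are disjoint, each completion is chordal, and $C_\ell = D_1 \cap D_2$. Combined with the block-gluing principle above, this disposes of the tree-skeleton case.

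The main obstacle, which I would treat last, is the presence of cycles in the skeleton $T$. A skeleton cycle glues several constituents cyclically along connector vertices into a single $2$-connected \emph{necklace} block $N$ that may interleave chordal and cyclic constituents, and whose induced long cycles comprise both the internal cycles of the cyclic constituents and the macro-cycle running once around the skeleton cycle. For such $N$ I would produce two disjoint-fill chordal completions by triangulating the macro-cycle with fans anchored at two adjacent connectors while simultaneously triangulating each cyclic constituent compatibly with that choice. The delicate point, and the crux of the whole argument, is to reconcile the internal triangulation of each cyclic constituent with the macro-triangulation so that no chord between two connectors is forced into both completions; I would arrange this by routing the macro-cycle through complementary arcs of each cyclic constituent in $D_1$ versus $D_2$. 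Finally, mirroring the reduction in Theorem~\ref{thm:main-cochordal}, I would induct on the number of constituents, peeling off a leaf constituent or an entire skeleton cycle at a connector cut vertex and invoking the block-gluing principle, so that each step preserves $\cochord \le 2$ and yields $\reg(I_1(G)) \le 3$.
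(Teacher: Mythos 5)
Your reduction is sound as far as it goes, and it is a genuinely different route from the paper's: the paper argues by induction on the skeleton, applying the exact-sequence bound $\reg(I_1(G))\le\max\{\reg(I_1(G):x)+1,\ \reg(I_1(G),x)\}$ at a connector vertex $x$, together with Fr\"oberg's theorem for $G\setminus N_G[x]$ and the known regularity of complements of cycles, whereas you reduce everything to the purely combinatorial claim $\cochord(G)\le 2$ via Woodroofe's inequality. Your block-gluing principle is correct (a graph is chordal iff all its blocks are, and fill edges confined to blocks can never coincide because two blocks share at most one vertex), and the two block types you actually handle --- chordal blocks and cycle blocks via two fans at adjacent vertices --- are fine. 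The genuine gap is the case you defer to the end: the \emph{necklace} block coming from a skeleton cycle. That case is where all the difficulty of the theorem lives, and your proposal only states a plan for it; you yourself call the reconciliation of internal and macro triangulations ``the crux of the whole argument'' without carrying it out. Note also that under the paper's literal definition such necklaces are not confined to skeleton cycles: three constituents around a degree-$3$ skeleton vertex may be pairwise glued at three \emph{distinct} vertices, producing a $2$-connected block spanning several constituents even when $T$ is a tree, so your tree-skeleton case quietly relies on the same unproven claim.

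Moreover, the sketch as stated breaks at identifiable points. First, ``fans anchored at two adjacent connectors'' presupposes connectors that are adjacent in $\overline{G}$; connectors of a necklace lie in a common constituent but are in general non-adjacent, and then the edge joining the two anchors is a fill edge of \emph{both} completions, destroying disjointness. If you instead anchor the second completion at a neighbor $y$ of the first anchor $x$, coning from $y$ fails because $y$ is not a cut vertex of the $2$-connected necklace: e.g.\ if the constituent containing $x,y$ is a $4$-cycle $x\,y\,u\,z\,x$ and the other constituents are paths $u\,w\,v$ and $v\,t\,x$, then $x\,z\,u\,w\,v\,t\,x$ is an induced $6$-cycle avoiding $y$ that no cone edge at $y$ and no internal fan chords. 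Second, ``the macro-cycle'' is not one cycle: there is an induced macro-cycle for every choice of induced through-path inside each constituent, and each completion must chord all of them, so ``routing through complementary arcs in $D_1$ versus $D_2$'' does not address the cycles routed the other way inside the same $D_i$. One side can indeed be built (cone from a connector $x_k$ plus internal fans is chordal, since every long induced cycle either meets the now-universal vertex $x_k$ or lies in a single constituent of the path-of-constituents $N\setminus x_k$), but then the second completion may use no fill edge at $x_k$, which forces $N_{\overline{G}}(x_k)$ to become a clique in it, and one must then chordalize the resulting smaller necklace-like graph while also avoiding the first completion's fan fills. Supplying that recursion (or a substitute) is exactly the missing content; without it your argument proves the theorem only for co-chordal-cactus graphs whose complements contain no cycle of constituents.
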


\begin{proof}
Let $(T,\{H_e\}_{e\in E(T)})$ be the co-chordal-cactus decomposition of $\overline{G}$, and let
$n = |V(T)|$.  
We prove the statement by induction on $n$.
If $n=2$, then $T$ is a single edge, so $\overline{G}$ is either chordal or a single cycle.  
If $\overline{G}$ is chordal, Fröberg’s theorem implies $\reg(I_1(G))=2$;  
if $\overline{G}$ is a cycle, \cite[Theorem~2.6]{MYZ12} gives $\reg(I_1(G))\le 3$.

Assume $n \ge 3$, and suppose first that $\overline{G}$ has no connector vertex.  
Then each $H_e$ meets all others in at most one vertex, and no vertex lies in more than one component; hence $T$ is a disjoint union of edges.  
Therefore, by \cite[Proposition~3.12]{amir}, together with Fröberg’s theorem and \cite[Theorem~2.6]{MYZ12}, we again obtain
$\reg(I_1(G)) \le 3$.

Now assume that $\overline{G}$ contains a connector vertex $x$.  
Applying the standard short exact sequence (see \cite[Lemma~2.10]{huneke}),
\[
\reg(I_1(G))
\le \max\{\reg(I_1(G):x)+1,\ \reg(I_1(G),x)\},
\]
and using \cite[Lemma~3.2]{huneke},
\[
\reg(I_1(G),x)=\reg(I_1(G\setminus x)),\qquad
\reg(I_1(G):x)=\reg(I_1(G\setminus N_G[x])),
\]
we reduce to smaller co-chordal-cactus graphs.
Deleting the connector vertex $x$ decreases the number of vertices of $T$, so by the induction hypothesis,
$\reg(I_1(G\setminus x)) \le 3.$
Moreover,
$\overline{G\setminus N_G[x]}=\overline{G}[\,N_{\overline{G}}(x)\,]$
is a disjoint union of chordal graphs: each $H_e\setminus\{x\}$ is chordal, and deleting a vertex from a cycle produces a path.  
Thus Fröberg’s theorem implies
$\reg(I_1(G\setminus N_G[x])) = 2.$
Combining the bounds above,
\[
\reg(I_1(G))
\le \max\{\,2+1,\ 3\,\}=3.
\]

This completes the proof.
\end{proof}

We now extend the regularity bound to the class of co-grid graphs.  
As the complement of a grid graph \(G_{n,m} = P_n \square P_m\), this class exhibits similar structural behavior leading to the same upper bound on regularity.

\begin{theorem}\label{reg-grid}
If $G=\overline{G_{n,m}}$, then $\reg(I_1(G))\le 3$.
\end{theorem}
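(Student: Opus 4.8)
The plan is to mirror the inductive short-exact-sequence argument of Theorem~\ref{thm:reg-co-cactus}, but carried out over a slightly larger class than the co-grid graphs themselves. The reason is that deleting a vertex from a grid $G_{n,m}$ generally leaves a graph that is no longer a grid, so one cannot induct within the co-grid family directly. Instead I would prove the stronger statement: for every graph $H$ that arises as an \emph{induced subgraph} of some grid graph $G_{n,m}$, one has $\reg\!\bigl(I_1(\overline{H})\bigr)\le 3$. Since $G_{n,m}$ is an induced subgraph of itself, this specializes to the theorem. The induction is on $|V(H)|$, and the class of induced subgraphs of grids is closed under vertex deletion, which is exactly what the recursion requires.

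The key structural observation—and the engine of the whole proof—is that in any grid graph the open neighborhood of every vertex is an independent set: the grid-neighbors of $x_{i,j}$ are among $x_{i\pm1,j},x_{i,j\pm1}$, and any two of these are at grid-distance $2$, hence nonadjacent. This independence is inherited by every induced subgraph $H$ of the grid, since $N_H(x)=N_{G_{n,m}}(x)\cap V(H)$. Consequently, for $G=\overline{H}$ and any vertex $x$, the set $V(G)\setminus N_G[x]$ equals $N_H(x)$, and the induced subgraph $G\setminus N_G[x]=\overline{H[N_H(x)]}$ is the complement of an edgeless graph, i.e.\ a clique. Its complement is chordal, so Fröberg's theorem~\cite[Theorem~1]{froberg} gives $\reg\!\bigl(I_1(G\setminus N_G[x])\bigr)\le 2$ (with the convention that the bound holds trivially when this ideal is zero).

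With this in hand I would invoke the standard regularity short exact sequence \cite[Lemma~2.10]{huneke},
\[
\reg(I_1(G))\le \max\{\reg(I_1(G):x)+1,\ \reg(I_1(G),x)\},
\]
together with the edge-ideal identities \cite[Lemma~3.2]{huneke}
\[
\reg(I_1(G):x)=\reg\!\bigl(I_1(G\setminus N_G[x])\bigr),\qquad
\reg(I_1(G),x)=\reg\!\bigl(I_1(G\setminus x)\bigr),
\]
exactly as in the proof of Theorem~\ref{thm:reg-co-cactus}. The first term is controlled by the structural observation: $\reg(I_1(G):x)+1\le 2+1=3$. For the second term, note that $G\setminus x=\overline{H\setminus x}$ and that $H\setminus x$ is again an induced subgraph of the grid; hence the induction hypothesis yields $\reg\!\bigl(I_1(G\setminus x)\bigr)\le 3$. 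Combining, $\reg(I_1(G))\le\max\{3,3\}=3$. The base cases ($H$ edgeless, so $G$ is complete, or $|V(H)|$ small) give $\reg\le 2$ and are immediate from Fröberg's theorem.

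I expect the only real subtlety to be the correct choice of the inductive class: recognizing that one must pass from co-grid graphs to complements of \emph{arbitrary induced subgraphs of grids}, and checking that the neighborhood-independence property—the single fact that forces the colon term down to regularity $2$—survives vertex deletion. Once the class is set up correctly, every vertex of $H$ is an admissible choice of $x$, and the regularity estimate itself is entirely routine. A minor bookkeeping point is to handle degenerate cases where $G\setminus N_G[x]$ has no edges (so its edge ideal is zero), but these only make the relevant bound easier to satisfy.
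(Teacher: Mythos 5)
Your proof is correct, and its engine is the same as the paper's: both arguments rest on the single structural fact that neighborhoods in a grid are independent sets, so that the colon term $I_1(G):x$ is (up to linear forms) the edge ideal of a clique and hence has regularity $2$, and both then apply the regularity bound of \cite[Lemma~2.10]{huneke} together with the identities $\reg(I_1(G):x)=\reg(I_1(G\setminus N_G[x]))$ and $\reg(I_1(G),x)=\reg(I_1(G\setminus x))$. Where you genuinely differ is the induction scheme. The paper inducts on the grid dimension $m$: it removes the entire first row $x_{1,1},\dots,x_{1,m}$ one vertex at a time through the iterated short exact sequences attached to $I_i=(I_1(G),x_{1,1},\dots,x_{1,i})$, so that the terminal quotient $I_m$ is again the edge ideal of a co-grid graph and the induction never leaves the co-grid family. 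You instead enlarge the class to complements of \emph{arbitrary induced subgraphs} of grids---a class hereditary under vertex deletion, and one in which neighborhood independence persists---and induct on $|V(H)|$ using a single vertex-deletion short exact sequence per step. Your route proves a formally stronger statement (the bound $\reg(I_1(\overline{H}))\le 3$ for every induced subgraph $H$ of a grid) and avoids the row-by-row bookkeeping and the attendant care with the successive colon ideals $(I_i:x_{1,i+1})$; the paper's route stays inside the class named in the theorem but pays for it with the iterated-sequence machinery. Both are valid, and in both cases the degenerate situations (zero ideals, small vertex sets) are harmless, as you note.
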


\begin{proof}
Let 
$V(G)=\{x_{i,j}\mid 1\le i\le n,\;1\le j\le m\}$  
and  
$R=\K[x_{i,j}\mid 1\le i\le n,\;1\le j\le m]$.  
Set $I_0=I_1(G)$ and for $1\le i\le m$ define  
$I_i=(I_1(G),x_{1,1},\ldots,x_{1,i})$.  
For $0\le i\le m-1$ the short exact sequence
\[
0\to R/(I_i:x_{1,i+1})(-1)\to R/I_i\to R/I_{i+1}\to 0
\]
yields the standard bound
\begin{equation*}
\reg(R/I_0)\le
\max\Big\{
\reg(R/I_m),\ 
\max_{0\le i\le m-1}\!\big(\reg(R/(I_i:x_{1,i+1}))+1\big)
\Big\}.
\tag{$\ast$}
\end{equation*}

We induct on $m$.  
For $m=1$, $G_{n,1}$ is a path; thus, by \cite[Theorem~1]{froberg},  
$\reg(I_1(\overline{G_{n,1}}))\le 3$.  
Assume the claim holds for all $G_{n,k}$ with $k<m$, and let $G=\overline{G_{n,m}}$.

\medskip
\noindent\textit{(i) The term $\reg(R/I_m)$.}  
The quotient $R/I_m$ is (after relabelling) the edge-ideal quotient of $\overline{G_{n,m-1}}$.  
By induction, $\reg(R/I_m)\le3$.

\medskip
\noindent\textit{(ii) The terms $\reg(R/(I_i:x_{1,i+1}))$.}  
Let $x=x_{1,i+1}$.  
Standard colon computations give
\[
\reg(R/(I_i:x))=\reg\bigl(I_1(G\setminus N_G[x])\bigr).
\]
In $G=\overline{G_{n,m}}$, the closed neighborhood of a first-row vertex $x$ removes a small local strip in the grid; the remaining graph has complement $\overline{G\setminus N_G[x]}$ chordal.  
Thus $G\setminus N_G[x]$ is co-chordal, so by \cite[Theorem 1]{froberg},
$\reg(R/(I_i:x))=1.$

\medskip
Substituting (i) and (ii) into $(\ast)$ gives
$\reg(R/I_0)
\le \max\{\,3,\;\max_i(1+1)\,\}
=2.$
Hence $\reg(I_1(G))=\reg(R/I_0)\le3$, completing the proof.
\end{proof}

Combining the preceding results, we arrive at a unified statement describing the linearity of resolutions for the ideals associated with these graph classes.

\begin{corollary}\label{main:cor}
Let $r \ge 2$.  
Then the ideal $I_r(G)$ has a linear resolution whenever $G$ belongs to one of the following classes:
\begin{enumerate}
    \item $G$ is a co-chordal-cactus graph;
    \item $G$ is $(2K_2, C_4)$-free;
    \item $G$ is the complement of a grid graph $G_{n,m} = P_n \square P_m$.
\end{enumerate}
\end{corollary}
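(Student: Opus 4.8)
The plan is to reduce the statement directly to the co-chordality theorems already established in Section~\ref{main} and then invoke the fundamental criterion of Bigdeli, Yazdanpour, and Zaare-Nahandi \cite[Theorem~3.3]{BYZ17}. The starting observation is that $I_r(G)$ is, by definition, the edge ideal $I(\con_r(G))$ of the $(r+1)$-uniform clutter $\con_r(G)$ whose edges are the connected $(r+1)$-subsets of $V(G)$. Since all minimal generators of $I_r(G)$ share the common degree $r+1$, proving that the resolution is linear amounts precisely to verifying that this clutter is co-chordal, that is, that its complement $\overline{\con_r(G)}$ is chordal.

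First I would dispatch the three families one at a time, each by a single appeal to the relevant structural result. For a co-chordal-cactus graph, Theorem~\ref{thm:main-cochordal} yields that $\con_r(G)$ is co-chordal; for a $(2K_2,C_4)$-free graph, Theorem~\ref{2k2-c4-free} gives the same conclusion for every $r\ge2$; and for $G=\overline{G_{n,m}}$ with $n,m\ge2$, Theorem~\ref{thm:con-cochordal} establishes co-chordality of $\con_r(G)$ for all $r\ge2$. In each instance the hypothesis $r\ge2$ of the corollary falls exactly within the range covered by the cited theorem, so no further argument about the parameter is needed.

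With co-chordality secured, the concluding step is identical across the three classes: because $\con_r(G)$ is a co-chordal clutter and is $(r+1)$-uniform, \cite[Theorem~3.3]{BYZ17} guarantees that its edge ideal $I(\con_r(G))=I_r(G)$ has a linear resolution. As emphasized in the introduction, this holds over every field $\K$ and is therefore characteristic-independent, and it further pins down $\reg(I_r(G))=r+1$ in each case.

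The honest assessment is that the corollary carries no genuine obstacle of its own: the entire combinatorial difficulty has been absorbed into the three co-chordality theorems, and what remains is one clean application of the BYZ criterion. The only points demanding care are bookkeeping matters, namely confirming that each referenced theorem is stated for the full range $r\ge2$ required here, and that the $(r+1)$-uniformity of $\con_r(G)$ is exactly the hypothesis under which \cite[Theorem~3.3]{BYZ17} applies.
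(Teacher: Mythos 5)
Your proposal is correct and matches the paper's proof exactly: the paper likewise derives the corollary directly from Theorems~\ref{thm:main-cochordal}, \ref{2k2-c4-free}, and \ref{thm:con-cochordal} combined with \cite[Theorem~3.3]{BYZ17}. Your additional remarks on $(r+1)$-uniformity and characteristic-independence are accurate elaborations of the same argument.
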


\begin{proof}
The assertion follows directly from Theorems~\ref{thm:main-cochordal}, \ref{2k2-c4-free}, and \ref{thm:con-cochordal},  
together with \cite[Theorem~3.3]{BYZ17}.
\end{proof}

As a consequence of Corollary~\ref{main:cor}, our framework also recovers several previously known results established in~\cite{DRSV24}.

\begin{corollary}[\cite{DRSV24}, Corollaries~3.14 and~5.5]\label{cor:DRSV24}
Let $r \ge 2$ be an integer. Then:
\begin{enumerate}
    \item If $G$ is a co-chordal graph, then the ideal $I_r(G)$ has a linear resolution.
    \item For $n \ge 4$, the ideal $I_r(\overline{C_n})$ also has a linear resolution.
\end{enumerate}
\end{corollary}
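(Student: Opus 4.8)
The plan is to recognize both families as special cases of co-chordal-cactus graphs and then quote Corollary~\ref{main:cor}(1); no fresh clutter-reduction is required, only an identification of the relevant co-chordal-cactus decompositions already flagged in Remark~\ref{rem:examples-cochordalcactus}. In other words, the substantive content has been carried out upstream, and the corollary is obtained purely by exhibiting the two graph families as subclasses of the family to which Corollary~\ref{main:cor} applies.

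First I would dispatch part (1), the co-chordal instance of Remark~\ref{rem:examples-cochordalcactus}(1). If $G$ is co-chordal then $\overline{G}$ is chordal, and I would equip $\overline{G}$ with the degenerate decomposition whose skeleton $T$ is a single edge $e$ and whose unique constituent component is $H_e = \overline{G}$. All conditions of Definition~\ref{def:co-chordal-cactus} then hold trivially---the pairwise intersection clause is vacuous when $|E(T)| = 1$---so $G$ is a co-chordal-cactus graph, and Corollary~\ref{main:cor}(1) delivers a linear resolution of $I_r(G)$ for every $r \ge 2$.

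Next I would treat part (2) by the same device, now invoking the co-cycle instance of Remark~\ref{rem:examples-cochordalcactus}(2). For $n \ge 4$ the graph $G = \overline{C_n}$ has complement $\overline{G} = C_n$, an induced cycle of length at least four. Placing this single cycle as the sole constituent component $H_e = C_n$ over a one-edge skeleton $T$ again satisfies Definition~\ref{def:co-chordal-cactus}, so $\overline{C_n}$ is co-chordal-cactus and Corollary~\ref{main:cor}(1) applies. In both cases the resulting linear resolution is equivalent to the regularity statement $\reg(I_r(G)) = r+1$, since $I_r(G)$ is generated in the single degree $r+1$.

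I do not anticipate any genuine obstacle: the hard work---co-chordality of $\con_r(G)$ for co-chordal-cactus $G$ (Theorem~\ref{thm:main-cochordal}) and the passage from co-chordality to linearity via \cite[Theorem~3.3]{BYZ17}---is already in place. The only step meriting a word of care is confirming that a one-edge skeleton carrying a single chordal piece (resp. a single long cycle) is a legitimate instance of Definition~\ref{def:co-chordal-cactus}; this is immediate once one observes that the intersection requirement imposes no constraint when the skeleton has a single edge. At the level of the underlying reduction, this degenerate single-edge case is precisely the base case $n=2$ of Theorem~\ref{thm:path-cochordal} (its chordal branch for part (1) and its cycle branch for part (2)), which confirms that the appeal to Corollary~\ref{main:cor} is well founded.
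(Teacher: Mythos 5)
Your proposal is correct and matches the paper's intended argument: the paper derives this corollary directly from Corollary~\ref{main:cor} by viewing co-chordal graphs and complements of cycles as co-chordal-cactus graphs with a one-edge skeleton, exactly as flagged in Remark~\ref{rem:examples-cochordalcactus}(1)--(2). Your explicit verification that Definition~\ref{def:co-chordal-cactus} is satisfied in the degenerate single-edge case simply spells out what the paper leaves implicit.
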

In Corollary~\ref{main:cor} it is shown that if $\reg(I_1(G))=2$, equivalently if $G$ is co–chordal, 
then $I_r(G)$ has a linear resolution for all $r \ge 2$.  
This naturally raises the question of whether a similar stabilization phenomenon holds for graphs whose edge ideals have slightly larger regularity.
It is known that if $G$ is a co–chordal-cactus graph, a $(2K_2,C_4)$–free graph, or a co–grid graph, 
then $\reg(I_1(G)) \le 3$ (see Theorem~\ref{thm:reg-co-cactus}, \cite[Proposition 2.11]{Nursel}, and Theorem~\ref{reg-grid}).  
Moreover, by Corollary~\ref{main:cor}, in each of these cases the higher connected ideals $I_r(G)$ are linear for all $r \ge 2$.  
This motivates the following natural question:
\[
\text{If }\reg(I_1(G)) = 3,\ \text{must } I_r(G) \text{ have a linear resolution for all } r \ge 2?
\]

The following example shows that the answer is negative.

\begin{example}
Consider the graph $G$ with edge ideal
\[
I_1(G)
= (x_1x_2,\, x_1x_3,\, x_1x_7,\, x_2x_3,\, x_2x_7,\, x_3x_7,\,
    x_4x_5,\, x_4x_6,\, x_5x_6,\, x_5x_7,\, x_6x_7).
\]
The graph $G$ is chordal and has induced matching number $2$.  
Hence, by~\cite[Corollary 6.9]{ha_adam}, we obtain $\reg(I_1(G)) = 3$.
Now consider the clutter $\con_2(G)$.  
The sets $\{x_1,x_2,x_3\}$ and $\{x_4,x_5,x_6\}$ form an induced matching in $\con_2(G)$.  
By \cite[Theorem 3.7]{BCDMS22}, this implies
$\reg(I_2(G)) \ge 5,$
and consequently $I_2(G)$ does not have a linear resolution.
\end{example}

The preceding discussion also connects to a question raised in~\cite{AJM24}.  
If $G$ is $(2K_2,C_4)$-free or, more generally, $2K_2$–free and $(r+1)$-claw-free, then $I_r(G)$ is known to be linear for all $r \ge 2$ (Theorem~\ref{main:cor} and~\cite[Theorem 6.2]{AJM24}).  
In~\cite[Question~7.1]{AJM24}, the authors asked whether every $2K_2$-free graph satisfies that $I_r(G)$ has a linear resolution for all $r \ge 2$.
This is not the case: the graph exhibited in~\cite[Counterexample~1.10]{nevo_peeva} is $2K_2$-free, yet a computation using \texttt{Macaulay2}~\cite{M2} shows that
$\reg(I_2(G)) = 4,$
so $I_2(G)$ does not have a linear resolution.

These examples lead to the following refined question:

\begin{question}
If $G$ is a $2K_2$-free graph and $\reg(I_1(G)) = 3$, does $I_r(G)$ have a linear resolution for all $r \ge 2$?
\end{question}

\section{Linear Resolutions of Powers of Ideals}\label{main1}

In this section we study the linearity of powers of the ideals $I_r(G)$ arising from graphs.  
We show that $I_r(G)^q$ has a linear resolution whenever $G$ belongs to several significant graph classes.

We begin with the necessary background.  
Let $R=\K[x_1,\ldots,x_n]$.  
A monomial ideal $I\subset R$ generated in a single degree is called \emph{polymatroidal} if it satisfies the following exchange axiom:  
for any $u,v\in\mathcal{G}(I)$ with $\deg_{x_i}(u)>\deg_{x_i}(v)$ for some $i$, there exists a $j$ with $\deg_{x_j}(u)<\deg_{x_j}(v)$ such that  
$x_j\cdot \frac{u}{x_i}\ \in\ \mathcal{G}(I),$
where $\mathcal{G}(I)$ denotes the minimal monomial generating set of~$I$.
A useful generalization, introduced by Kokubo and Hibi~\cite{KK06}, is that of \emph{weakly polymatroidal ideals}.  
Let  
\[
f=x_1^{a_1}\cdots x_n^{a_n},\qquad 
g=x_1^{b_1}\cdots x_n^{b_n}\ \in\ \mathcal{G}(I).
\]
Then $I$ is \emph{weakly polymatroidal} if, whenever
\[
a_1=b_1,\;\ldots,\;a_{t-1}=b_{t-1},\qquad a_t>b_t\quad\text{for some }t,
\]
there exists $\ell>t$ such that
$x_t\cdot\frac{g}{x_\ell}\ \in\ I.$

For any monomial $m\in R=\K[x_1,\ldots,x_n]$, we write
$\Supp(m)=\{\,x_i\mid x_i\text{ divides }m\,\}$
for its \emph{support}.

\begin{lemma}\label{lem:five-divisors}
Let $m=x_1^{a_1}\cdots x_n^{a_n}$ be a monomial of degree $6$ with $|\Supp(m)|\ge5$.  
Then $m\in\mathcal{G}\!\left((I_2(\overline{P_n}))^2\right)$.
\end{lemma}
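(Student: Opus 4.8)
The plan is to reduce the membership $m\in\G\!\left((I_2(\overline{P_n}))^2\right)$ to a purely combinatorial splitting problem and then solve it by a short case analysis. First I would record two reductions. Since $I_2(\overline{P_n})$ is generated in degree $3$, the ideal $(I_2(\overline{P_n}))^2$ is generated in degree $6$ and every one of its elements has degree at least $6$; hence a degree-$6$ monomial lies in it precisely when it equals an actual product $g_1g_2$ of two minimal generators of $I_2(\overline{P_n})$, and any such monomial is automatically a \emph{minimal} generator, because none of its proper divisors (of degree $5$) can lie in the ideal. Thus it suffices to write $m=g_1g_2$ with $g_i=\prod_{x\in S_i}x$ for connected $3$-subsets $S_1,S_2$ of $\overline{P_n}$. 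Next I would describe these subsets: for $a<b<c$ the edge $\{a,c\}$ always lies in $\overline{P_n}$ (as $c-a\ge2$), so $\overline{P_n}[\{a,b,c\}]$ is connected exactly when $b$ is joined to $a$ or to $c$, i.e. when $b-a\ge2$ or $c-b\ge2$; equivalently, $\{a,b,c\}$ is connected if and only if it is \emph{not} a block of three consecutive integers.

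I would then use the support hypothesis to pin down the exponents. Writing $m=\prod x_i^{a_i}$ of degree $6$ with $|\Supp(m)|\ge5$, the positive exponents sum to $6$ over at least five variables, which forces either six distinct variables each of exponent $1$ (Case A) or exactly five variables with exponent vector $(2,1,1,1,1)$ (Case B); in particular every $a_i\le2$, so a factorization into two squarefree generators is at least numerically possible. In Case A, with support $v_1<v_2<\cdots<v_6$, I would simply take $S_1=\{v_1,v_3,v_5\}$ and $S_2=\{v_2,v_4,v_6\}$: each triple has span at least $4$, hence neither is three consecutive integers, so both are connected and $m=g_1g_2$.

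The substantive part is Case B, where $m=v^2w_1w_2w_3w_4$ with $v,w_1,\dots,w_4$ distinct. The doubled vertex $v$ must then belong to both $S_1$ and $S_2$, so the task reduces to partitioning $\{w_1,w_2,w_3,w_4\}$ into two pairs such that each pair together with $v$ is not three consecutive integers. I would rephrase this as finding a perfect matching of $K_4$ on the $w_i$ that avoids the ``bad'' pairs, a pair $\{w_i,w_j\}$ being bad exactly when $\{v,w_i,w_j\}$ is one of $\{v-2,v-1,v\}$, $\{v-1,v,v+1\}$, $\{v,v+1,v+2\}$; consequently the only possible bad pairs are $\{v-2,v-1\}$, $\{v-1,v+1\}$, $\{v+1,v+2\}$. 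Since each edge of $K_4$ lies in exactly one of its three perfect matchings, at most two bad pairs can destroy at most two matchings, leaving a good one whenever fewer than three bad pairs occur.

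The main obstacle is the extremal configuration in which all three bad pairs are present, which forces $\{w_1,w_2,w_3,w_4\}=\{v-2,v-1,v+1,v+2\}$. Here the key observation is that two of the bad pairs, $\{v-2,v-1\}$ and $\{v+1,v+2\}$, lie in the \emph{same} perfect matching of $K_4$, so the three bad pairs cannot exhaust all three matchings. Explicitly, the matching $\{v-2,v+1\}$, $\{v-1,v+2\}$ is always good, since $\{v-2,v,v+1\}$ and $\{v-1,v,v+2\}$ each have span $3$ and are therefore connected. In every case, then, a good matching survives, and setting $S_1$, $S_2$ to be $\{v\}$ together with the two surviving pairs yields $m=g_1g_2$, completing Case B and hence the lemma.
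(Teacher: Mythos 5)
Your proof is correct, and while it shares the paper's basic reduction---a degree-$6$ monomial lies in $(I_2(\overline{P_n}))^2$ exactly when it splits as a product of two connected triples, and in the $(2,1,1,1,1)$ case the doubled variable $v$ must sit in both factors---the way you produce the factorization is genuinely different. The paper simply exhibits explicit factorizations for each position of the squared variable in the sorted support (e.g. $a^2bcde=(acd)(abe)$, $ab^2cde=(abd)(bce)$, $abc^2de=(acd)(bce)$, and $(acd)(bef)$ in the squarefree case), verifying connectivity of each chosen triple from the gaps forced by betweenness of the omitted vertices. You instead isolate the clean characterization that a triple of $\overline{P_n}$ is connected precisely when it is not three consecutive integers, recast the $(2,1,1,1,1)$ case as finding a perfect matching of $K_4$ on $\{w_1,\dots,w_4\}$ avoiding the at most three bad pairs $\{v-2,v-1\}$, $\{v-1,v+1\}$, $\{v+1,v+2\}$, and settle the only delicate configuration $\{w_1,\dots,w_4\}=\{v-2,v-1,v+1,v+2\}$ by observing that two of the bad pairs lie in a common matching, so the matching $\{v-2,v+1\},\{v-1,v+2\}$ always survives. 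What your route buys is uniformity and explanation: it shows \emph{why} a good partition must exist rather than checking that particular ones do, it avoids the case-by-position enumeration, and you also make explicit the minimality point (any degree-$6$ member of the square is automatically a minimal generator) that the paper leaves implicit. What the paper's route buys is brevity: four explicit identities and a one-line connectivity check, with no auxiliary matching argument needed.
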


\begin{proof}
Let $a<b<c<d<e<f$ be the distinct variables dividing $m$, with $f$ present only if $|\Supp(m)|=6$.

\smallskip
\noindent\textit{Case 1: $|\Supp(m)|=5$.}  
Exactly one variable appears with exponent $2$.  
Up to symmetry:
\[
\begin{aligned}
a^2bcde &= (acd)(abe),\\
ab^2cde &= (abd)(bce),\\
abc^2de &= (acd)(bce),
\end{aligned}
\]
and the cases $abcd^2e$, $abcde^2$ are analogous.  
Each factor is a squarefree degree-$3$ monomial corresponding to a connected $3$-vertex induced subgraph of $\overline{P_n}$; hence each lies in $\mathcal{G}(I_2(\overline{P_n}))$.  
Thus $m\in\mathcal{G}\left((I_2(\overline{P_n}))^2\right)$.

\smallskip
\noindent\textit{Case 2: $|\Supp(m)|=6$.}  
Then $m=abcdef$ and
$m=(acd)(bef),$
with both factors squarefree of degree $3$ and supported on connected $3$-vertex induced subgraphs of $\overline{P_n}$; hence both are generators of $I_2(\overline{P_n})$.

\smallskip
In all cases $m$ factors as a product of two generators of $I_2(\overline{P_n})$, proving the claim.
\end{proof}

We are now ready to state the main result of this section, which establishes the existence of 
linear resolutions for all powers of $I_r(G)$ when $G$ is the complement of a tree with bounded degree.

\begin{theorem}\label{tree-complement}
Let $T$ be a tree with maximum degree $\Delta(T)\le r$, where 
$\Delta(T):=\max\{\deg_T(v)\mid v\in V(T)\}$, 
and let $G=\overline{T}$. 
Then for every $q\ge1$ and $r\ge2$, the ideal $I_r(G)^q$ has a linear resolution.
\end{theorem}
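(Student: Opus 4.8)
The plan is to pin down the minimal generators of $I_r(\ol T)$ explicitly and then to prove that every power $I_r(\ol T)^q$ has linear quotients with respect to a carefully chosen order on the variables, which gives a linear resolution by \cite[Proposition~8.2.1]{Herzog'sBook}. First I would determine the generators. Writing $G=\ol T$, a set $S$ with $|S|=r+1$ produces a generator $\prod_{x\in S}x$ of $I_r(G)$ exactly when $\ol{T[S]}$ is connected. Since $T[S]$ is a forest, Lemma~\ref{join-dis} (together with Lemma~\ref{chara}) shows that $\ol{T[S]}$ is disconnected if and only if $T[S]$ has a vertex adjacent to all others of $S$; in a tree this forces $T[S]\cong K_{1,r}$ with $S=N_T[v]$ for a vertex $v$ of degree $r$. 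Hence the minimal generators of $I_r(G)$ are precisely the squarefree degree-$(r+1)$ monomials $x_S$ with $S\neq N_T[v]$ for every $v$ with $\deg_T(v)=r$, and the hypothesis $\Delta(T)\le r$ guarantees that these forbidden sets are exactly the closed neighborhoods of the maximum-degree vertices.

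Next I would describe the generators of a power. A monomial $m$ of degree $q(r+1)$ lies in $\mathcal{G}(I_r(G)^q)$ if and only if it factors as a product of $q$ generators of $I_r(G)$, and the content of Lemma~\ref{lem:five-divisors} is the prototype of the key combinatorial fact: a monomial whose support is large enough that its exponents cannot be concentrated on a single forbidden neighborhood automatically admits such a factorization. I would generalize this to arbitrary $r$ and arbitrary $T$ with $\Delta(T)\le r$, showing that whenever $m$ is not supported on too few vertices one can greedily split off generators avoiding the sets $N_T[v]$, using that each forbidden set has size $r+1$ while $\Delta(T)\le r$ leaves enough room to redistribute a unit of any overused variable into a different factor. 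The delicate monomials are those concentrated near a single maximum-degree neighborhood, and these would be handled directly, exactly as in Lemma~\ref{lem:five-divisors}.

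With the generators understood, I would fix a linear order on the vertices refining the structure of $T$ and verify that $I_r(G)^q$ is weakly polymatroidal in the sense of Kokubo--Hibi \cite{KK06}: given generators $f,g$ of the power agreeing in the first $t-1$ variables with $\deg_{x_t}f>\deg_{x_t}g$, I would produce some $\ell>t$ with $x_t\,(g/x_\ell)\in I_r(G)^q$ by replacing one occurrence of $x_\ell$ by $x_t$ inside a single one of the $q$ factors of $g$. Weak polymatroidality of each power then yields linear quotients, hence a linear resolution, for every $q\ge1$. The main obstacle is precisely this exchange step: the substitution must never turn a factor into a forbidden star neighborhood $N_T[v]$, so one must show that $\Delta(T)\le r$ always supplies an admissible index $\ell$ together with an admissible factor in which to perform the replacement. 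I note that $I_r(\ol T)$ is in general \emph{not} polymatroidal---already $I_2(\ol{P_5})$ fails the symmetric exchange property---so the polymatroidal machinery is unavailable, and it is the asymmetric weakly polymatroidal argument, controlled by the chosen tree order, that carries the proof.
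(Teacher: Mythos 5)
Your description of the minimal generators of $I_r(\ol{T})$ is correct: since an induced forest $T[S]$ is a join only when it is a spanning star, the missing squarefree degree-$(r+1)$ monomials are exactly the closed neighborhoods $N_T[v]$ of vertices with $\deg_T(v)=r$. Your counterexample is also genuinely correct: in $I_2(\ol{P_5})$, taking $u=x_1x_3x_4$, $v=x_2x_4x_5$ and exchanging $x_1$ produces only the stars $\{2,3,4\}$ and $\{3,4,5\}$, so the exchange property fails for $r=2$. For $r=2$ your plan is essentially the paper's: $\Delta(T)\le 2$ forces $T\cong P_n$, and the paper proves each power $(I_2(\ol{P_n}))^q$ is weakly polymatroidal, with Lemma~\ref{lem:five-divisors} as the key factorization tool.

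The genuine gap is in the case $r\ge3$, and it originates in an overreach: from the $r=2$ counterexample you conclude that ``the polymatroidal machinery is unavailable,'' but this is false for $r\ge3$. The paper proves that $I_r(\ol{T})$ \emph{is} polymatroidal whenever $r\ge3$ and $\Delta(T)\le r$: if exchanging $x_{i_p}\in U\setminus V$ for some $x_{j_t}\in V\setminus U$ creates a spanning star in $T$, then its center already has $T$-degree $r$, and one checks (using that $T$ has no triangles or $4$-cycles, that the leaves of the star form a clique in $G$, and that a second simultaneous star would force a vertex of $U\setminus\{x_{i_p}\}$ to be adjacent to another leaf, impossible when $r\ge3$) that a second vertex $x_{j_q}\in V\setminus U$ always yields a connected induced subgraph of $G$. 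Once polymatroidality of $I_r(\ol{T})$ itself is established, every power is polymatroidal and hence has a linear resolution by \cite[Corollary~12.6.4]{Herzog'sBook}, with no per-power work at all. By rejecting this route, your proposal commits to verifying weak polymatroidality of $I_r(\ol{T})^q$ directly for every $q$, every $r\ge3$, and every tree with $\Delta(T)\le r$, together with a generalization of Lemma~\ref{lem:five-divisors} to arbitrary $r$ and arbitrary trees — and none of this is actually carried out. The sentences ``I would generalize this\dots'' and ``one must show that $\Delta(T)\le r$ always supplies an admissible index $\ell$ together with an admissible factor'' restate the main obstacle rather than resolve it; note that even the paper's $r=2$ case, restricted to paths, needs the degree-counting argument of Lemma~\ref{lem:five-divisors} plus a lengthy case analysis, and for general trees the forbidden neighborhoods $N_T[v]$ can overlap in ways your greedy redistribution sketch does not address. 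So the proposal is sound and matches the paper for $r=2$, but for $r\ge3$ it both discards the simpler correct argument for a mistaken reason and leaves the substitute argument unproved.
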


\begin{proof}
It suffices to show:
\[
\text{(i) } (I_2(\overline{P_n}))^q\ \text{is weakly polymatroidal for } r=2,\qquad
\text{(ii) } I_r(G)\ \text{is polymatroidal for } r\ge3.
\]
Then, by \cite{KK06} and \cite[Corollary 12.6.4]{Herzog'sBook}, all powers have linear resolutions.

\medskip
\noindent\textit{Case: $r=2$.}  
If $\Delta(T)=2$, then $T\cong P_n$ with vertices $x_1<\dots<x_n$.  
For a monomial $m$, let $\Supp(m)=\{x_i: x_i\mid m\}$.  
Let
$u=x_1^{a_1}\cdots x_n^{a_n}$,
$v=x_1^{b_1}\cdots x_n^{b_n}\in\mathcal G\big((I_2(\overline{P_n}))^q\big),$
with
\[
a_1=b_1,\dots,a_{t-1}=b_{t-1},\qquad a_t>b_t.
\]
We must find $j>t$ such that 
$x_t\cdot \frac{v}{x_j}\ \in\ (I_2(\overline{P_n}))^q .$
There exists some $j>t$ with $x_j\mid v$; otherwise $\Supp(v)\subseteq \{x_1,\dots,x_t\}$, and since $a_i=b_i$ for $i<t$ while $a_t>b_t$, we obtain
$\deg(u)\ge \deg(v)+1,$
contradicting $\deg(u)=\deg(v)=q(r+1)$.  
Choose such a $j>t$ with $\dd_{P_n}(x_t,x_j)$ minimal.  
Let $m\mid v$ be a monomial divisor of $v$ with $x_j\mid m$, and write
$m = a b x_j.$

\smallskip
\textit{(i) $x_t\nmid m$.}  
Then
$x_t\frac{m}{x_j}=abx_t.$
If $abx_t\in \mathcal{G}(I_2(\overline{P_n}))$ we are done.  
Otherwise $P_n[\{a,b,x_t\}] \cong P_n[\{x_{t-2},x_{t-1},x_t\}]$, which forces $q\ge2$;  
if $q=1$ then $x_{t-2}x_{t-1}x_t=u$, a contradiction.
If there exists a monomial $m'\mid v$ with
$|\Supp(m')\cap\{a,b,x_t\}|=1,$
then Lemma~\ref{lem:five-divisors} completes the argument.  
Thus we may assume that
\[
|\Supp(m')\cap\{a,b,x_t\}| \ge 2
\qquad\text{for all } m'\mid v,\ m'\ne abx_j.
\]

Since $a,b < x_t$, we also have $a,b \mid u$.  
Because $a_t > b_t$, the presence of $x_t$ in $v$ forces its presence in $u$; therefore
\[
\sum_{x\in\{a,b,x_t\}}\deg_x(u)
>
\sum_{x\in\{a,b,x_t\}}\deg_x(v).
\]
Explicitly,
$\sum_{x\in\{a,b,x_t\}}\deg_x(v)=2q.$
If every monomial $m_0\mid u$ satisfies 
$|\Supp(m_0)\cap\{a,b,x_t\}|=2,$
then the same sum for $u$ equals $2q$, contradicting the strict inequality above.
If there exists $m_0\mid u$ with 
$|\Supp(m_0)\cap\{a,b,x_t\}|=1$
and
$|\Supp(m)\cap\{a,b,x_t\}|=2
\quad\text{for all } m\mid u,\ m\ne m_0,$
then
\[
\sum_{x\in\{a,b,x_t\}}\deg_x(u)
=
2(q-1)+1
<
2q
=
\sum_{x\in\{a,b,x_t\}}\deg_x(v),
\]
again a contradiction.  
A similar contradiction arises whenever some $m_0\mid u$ satisfies   
$|\Supp(m_0)\cap\{a,b,x_t\}|=1$.
Hence we must have 
$|\Supp(m)\cap\{a,b,x_t\}| \ge 2~\text{for all } m\mid u.$
Now assume 
$|\Supp(m)\cap\{a,b,x_t\}| = 2
~\text{for all } m\mid u.$
Then 
\[
\sum_{x\in\{a,b,x_t\}}\deg_x(u)
=
\sum_{x\in\{a,b,x_t\}}\deg_x(v),
\]
contradicting the earlier strict inequality.
Therefore $abx_t$ must be a monomial divisor of $m$, contradicting  
$abx_t \notin \mathcal{G}(I_2(\overline{P_n}))$.  
Thus such an $m'$ must exist, and Lemma~\ref{lem:five-divisors}
implies that
$x_t(v/x_j) \in (I_2(\overline{P_n}))^q.$

\smallskip
\textit{(ii) $x_t \mid m$.}  
Then $m = a x_t x_j$, and hence $x_t \frac{m}{x_j} = a x_t^2$.
In this case, there exists a monomial $m' \mid v$ with $x_t \nmid m'$.  
Let
$\{m_1,\ldots,m_n\} = \{\, m : m \mid v \ \text{and}\ x_t \nmid m \,\}.$
If there exists a variable $x_{t'} > x_t$ dividing some $m_j$, then we reduce to \textit{Case~(i)}.  
Thus we may assume that every variable $y$ dividing any $m_j$ satisfies $y < x_t$.
If $|\Supp(m')\cap\{a,x_t\}|=0$, we are done.  
Assume instead that 
$\Supp(m')\cap\{a,x_t\}=\{a\},$
so $m' = a b c$ for some $b,c < x_t$.
Then
$m' \cdot a x_t^2 = (a x_t b)(a x_t c).$
Since $|N_{P_n}(x_t)| = 2$, at least one of the monomials 
$a x_t b$ or $a x_t c$ lies in $\mathcal{G}(I_2(\overline{P_n}))$.  
If both do, we are done.  
Otherwise $a x_t b \notin \mathcal{G}(I_2(\overline{P_n}))$, and thus  
$P_n[\{a,b,x_t\}] \cong P_n[\{x_{t-2}, x_{t-1}, x_t\}],$
which implies $q \ge 3$.
As in \textit{Case~(i)}, a contribution-counting argument now yields a monomial  
$m'' \mid v$ satisfying
$|\Supp(m'')\cap\{a,b,x_t\}| = 1.$
Lemma~\ref{lem:five-divisors} then shows that $x_t (v/x_j) \in (I_2(\overline{P_n}))^q$.

Thus $(I_2(\overline{P_n}))^q$ is weakly polymatroidal in \textit{Case~(ii)} as well.

\medskip
\noindent\textit{Case:  $r \ge 3$.}  
Let
$u = x_{i_1}\cdots x_{i_{r+1}}$, 
$v = x_{j_1}\cdots x_{j_{r+1}}$
be distinct minimal generators of $I_r(G)$, and set
$U = \Supp(u)$, $V = \Supp(v).$
If $|U \setminus V| = 1$, the exchange property is immediate.  
Assume now that $|U \setminus V| \ge 2$; then also $|V \setminus U| \ge 2$.  
Choose
\[
x_{i_p} \in U \setminus V, \qquad  
x_{j_t}, x_{j_q} \in V \setminus U.
\]

Consider the induced subgraph
$G\big[\{x_{j_t}\} \cup (U \setminus \{x_{i_p}\})\big].$
If it is connected, the exchange property follows.  
Otherwise, its complement
$\overline{G}\big[\{x_{j_t}\} \cup (U \setminus \{x_{i_p}\})\big]$
is a connected induced subgraph of the tree $T$.  
By Lemma~\ref{join-dis}, this graph has the structure
$B_1 \vee B_2 \vee \cdots \vee B_k.$
If $k\ge 3$, then $\overline{G}$ would contain a cycle, contradicting the fact that $\overline{G}$ is a tree. 
Thus $k=2$.
Moreover, if both $B_1$ and $B_2$ had at least two vertices, a cycle would again occur.  
Thus, one block is a singleton; assume $|B_1| = 1$.  
Since any edge inside $B_2$ would also create a cycle, $B_2$ is an independent set.  
Therefore,
$\overline{G}\big[\{x_{j_t}\} \cup (U \setminus \{x_{i_p}\})\big] \cong \text{a star graph}.$

\smallskip
\emph{(i) $x_{j_t}$ is the central vertex.}  
If $x_{j_q}$ is adjacent in $\overline{G}$ to no vertex of $U \setminus \{x_{i_p}\}$, then
$G\big[\{x_{j_q}\} \cup (U \setminus \{x_{i_p}\})\big]$
is connected.  
Otherwise, suppose $x_{j_q}$ is adjacent in $\overline{G}$ to exactly one vertex
$v' \in U \setminus \{x_{i_p}\}$.  
Since $r \ge 3$, choose
$z \in U \setminus \{x_{i_p}, v'\}$
that is not adjacent to $x_{j_q}$ in $\overline{G}$.  
In $G$, the set $U \setminus \{x_{i_p}\}$ forms a clique, and the presence of $z$ ensures that $x_{j_q}$ remains connected to this clique.  

\smallskip
\emph{(ii) $x_{j_t}$ is a non-central vertex.}  
Let $c \in U \setminus \{x_{i_p}\}$ be the center of the star.  
If $x_{j_q}$ is adjacent in $\overline{G}$ to no vertex of $U \setminus \{x_{i_p}\}$, then
$G\big[\{x_{j_q}\} \cup (U \setminus \{x_{i_p}\})\big]$
is connected.  
Otherwise, suppose $x_{j_q}$ is adjacent in $\overline{G}$ to exactly one vertex
$v' \in U \setminus \{c, x_{i_p}\}.$
Since $r \ge 3$, choose
$z \in U \setminus \{x_{i_p}, c, v'\}$
that is not adjacent to $x_{j_q}$ in $\overline{G}$.  
In $G$, the set $U \setminus \{x_{i_p}, c\}$ is a clique, and since $x_{j_q}$ is connected to both $z$ and $c$, the induced subgraph is connected.

In all subcases, the exchange property holds.  
Thus $I_r(G)$ is polymatroidal for every $r \ge 3$.
\end{proof}

Let $n_1,\ldots,n_p$ be positive integers.  
The graph $K_{n_1,\ldots,n_p}$, called the \emph{complete $p$-partite graph}, is the
graph whose vertex set admits a partition
$V_1 \sqcup \cdots \sqcup V_p$ with $|V_i| = n_i$, such that two vertices are
adjacent precisely when they belong to different parts.

We next introduce a family of graphs obtained by gluing several complete graphs along a common core.  
Let
$V = \{x_1,\ldots,x_p\} \;\cup\; \{\, y_{ij} \mid 1\le i\le n,\; 1\le j\le m_i \,\},$
where $p\ge1$ and $m_i \ge 1$ for all $i$.  
For each $1\le i\le n$, let $G^{p,m_i}$ denote the complete graph on the set
\[
V(G^{p,m_i}) = \{x_1,\ldots,x_p,\, y_{i1},\ldots,y_{im_i}\}.
\]
The graph $\Gamma_{p,m_1,\ldots,m_n}$ is defined by
\[
V(\Gamma_{p,m_1,\ldots,m_n}) = V, \qquad
E(\Gamma_{p,m_1,\ldots,m_n}) = \bigcup_{i=1}^n E(G^{p,m_i}).
\]
Thus $\Gamma_{p,m_1,\ldots,m_n}$ is obtained by taking $n$ cliques
$G^{p,m_i}$, each containing the common core $\{x_1,\ldots,x_p\}$,
and adjoining for each $i$ the distinct peripheral vertices
$\{y_{i1},\ldots,y_{im_i}\}$.

\smallskip

In addition to complements of trees, several further classes of graphs admit the persistence of linear
resolutions for all powers of their $r$-connected ideals.  
The following result collects these cases.

\begin{theorem}\label{thm:linear-resolution-cases}
For every integer $q \ge 1$, the ideal $I_r(G)^q$ has a linear resolution in each of the following cases:
\begin{enumerate}
    \item $G = K_{n_1,\ldots,n_p}$, for all $r \ge 1$;
    \item $G = \overline{C_n}$, for all $r \ge 3$;
    \item $G = \Gamma_{p,m_1,\ldots,m_n}$ with $m_i \le r$ for all $i$, for all $r \ge 1$.
\end{enumerate}
\end{theorem}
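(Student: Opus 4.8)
The plan is to prove all three cases by a single mechanism: in each family I will show that the connected ideal $I_r(G)$ is \emph{polymatroidal}, and then invoke the fact that every power of a polymatroidal ideal is again polymatroidal and therefore has a linear resolution (products of polymatroidal ideals are polymatroidal, and polymatroidal ideals have linear resolutions by \cite[Corollary~12.6.4]{Herzog'sBook}; see also \cite{KK06}). Thus the whole statement reduces to verifying the polymatroidal exchange axiom for $I_r(G)$. Since every generator is squarefree of degree $r+1$, the exchange axiom amounts to the combinatorial condition: for generators $u,v$ and any variable $x_i\in\Supp(u)\setminus\Supp(v)$, there is $x_j\in\Supp(v)\setminus\Supp(u)$ such that $(\Supp(u)\setminus\{x_i\})\cup\{x_j\}$ is again the support of a generator.

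Cases (1) and (3) I would treat in parallel, since both rest on an explicit description of $\mathcal{G}(I_r(G))$. For $G=K_{n_1,\dots,n_p}$ an $(r+1)$-subset $S$ is connected precisely when $S$ is \emph{not} contained in a single part; hence the generators are exactly the squarefree degree-$(r+1)$ monomials whose support meets at least two parts. For $G=\Gamma_{p,m_1,\dots,m_n}$ the core $X=\{x_1,\dots,x_p\}$ is a clique adjacent to every peripheral vertex, while the peripheral sets $Y_i=\{y_{i1},\dots,y_{im_i}\}$ are pairwise nonadjacent; so $\Gamma[S]$ is connected iff $S\cap X\neq\emptyset$ or $S\subseteq Y_i$ for some $i$. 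The hypothesis $m_i\le r$ forces $|Y_i|\le r<r+1$, which discards the second alternative, so the generators are exactly the degree-$(r+1)$ squarefree monomials whose support meets $X$. In both descriptions the exchange axiom is checked identically: writing $S=\Supp(u)\setminus\{x_i\}$, if $S$ already satisfies the defining condition (meets two parts, resp.\ meets $X$) then any $x_j\in\Supp(v)\setminus\Supp(u)$ works; otherwise $S$ lies in a single part $A$ (resp.\ $S\cap X=\emptyset$), which forces $x_i\notin A$ (resp.\ $x_i\in X$), and one then produces the required $x_j$ from $v$ using that $v$ itself satisfies the defining condition while $x_i\notin\Supp(v)$.

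For case (2) the key observation is that for $r\ge3$ the ideal collapses to a squarefree Veronese ideal. Concretely I would first establish the combinatorial lemma that for $n\ge5$ any $S\subseteq V(\overline{C_n})$ with $\overline{C_n}[S]$ disconnected satisfies $|S|\le3$: such an $S$ splits as $S_1\sqcup S_2$ with every cross pair an edge of $C_n$, and $2$-regularity of $C_n$ forces $|S_1|,|S_2|\le2$, while $|S_1|=|S_2|=2$ is impossible for $n\ge5$. Consequently, for $r\ge3$ every $(r+1)$-subset induces a connected subgraph of $\overline{C_n}$, so $I_r(\overline{C_n})$ is the full squarefree Veronese ideal of degree $r+1$, which is polymatroidal (the uniform matroid); the degenerate cases $n\le4$ yield a zero or trivial ideal and are immediate. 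One may also note that this explains the threshold $r\ge3$ in the statement, the Veronese collapse failing at $r=2$.

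The main obstacle is the exchange verification in cases (1) and (3) at the boundary, namely when $\Supp(u)\setminus\{x_i\}$ degenerates into a single part or misses the core: here one must exploit that $x_i\notin\Supp(v)$ to locate a suitable $x_j\in\Supp(v)$ restoring connectivity, and confirming that no configuration of $u,v$ defeats this requires care. By contrast, once the reduction to the squarefree Veronese ideal is in place, case (2) is essentially automatic, so the real work lies in the connectivity lemma for $\overline{C_n}$ and in the uniform treatment of the exchange axiom for these ``support meets a distinguished set'' ideals.
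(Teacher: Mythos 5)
Your proposal is correct, and its overall mechanism---verify the polymatroidal exchange axiom for $I_r(G)$, then invoke \cite[Corollary~12.6.4]{Herzog'sBook} (powers of polymatroidal ideals are polymatroidal, hence have linear resolutions)---is exactly the paper's. For cases (1) and (3) your argument is essentially the paper's: the paper likewise reduces to the observation that an $(r+1)$-set is a generator iff it meets at least two partite sets (resp.\ meets the core clique $\{x_1,\dots,x_p\}$, which is precisely where the hypothesis $m_i\le r$ enters), and it settles the boundary configuration the same way you do, by noting that otherwise $\Supp(v)$ would be trapped in a single part (resp.\ miss the core), contradicting that $v$ is a generator, using $x_i\notin\Supp(v)$. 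The genuine divergence is case (2): the paper verifies the exchange property for $\overline{C_n}$ by hand, via a case analysis on how many of the two $C_n$-neighbors $a,b$ of the incoming vertex $x_{j_w}$ lie in $\Supp(u)\setminus\{x_{i_p}\}$, using $r\ge3$ to produce auxiliary vertices that restore connectivity. You instead prove the stronger structural fact that for $n\ge5$ every disconnected induced subgraph of $\overline{C_n}$ has at most $3$ vertices (a $(2,2)$ split across the two sides would force a $4$-cycle inside $C_n$, and $2$-regularity caps each side at $2$), so for $r\ge3$ the ideal $I_r(\overline{C_n})$ is the full squarefree Veronese ideal of degree $r+1$, i.e.\ the ideal of a uniform matroid, which is polymatroidal for free; the degenerate cases $n\le4$ (and $n<r+1$) give the zero ideal, as in the paper. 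Your route for (2) is shorter, avoids the neighbor case analysis entirely, and has the added merit of explaining the threshold $r\ge3$ conceptually: the Veronese collapse fails at $r=2$, where $3$-sets of the form $\{a\}\cup N_{C_n}(a)$ induce disconnected subgraphs. Both treatments are complete and correct; yours trades the paper's local connectivity checks for one clean global lemma.
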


\begin{proof}
We verify that $I_r(G)$ satisfies the exchange property, hence is polymatroidal; therefore $I_r(G)^q$ has a linear resolution for all $q \ge 1$ by~\cite[Corollary 12.6.4]{Herzog'sBook}.

\medskip
\noindent\textbf{(1) $G = K_{n_1,\ldots,n_p}$.}
Let $u,v \in \mathcal{G}(I_r(G))$ be distinct generators.  
Pick $x_{i_p} \in \Supp(u)\setminus\Supp(v)$ and set
$S := \Supp(u)\setminus\{x_{i_p}\}$,  $|S|=r.$
We must find $x_{j_w} \in \Supp(v)\setminus\Supp(u)$ such that $G[S\cup\{x_{j_w}\}]$ is connected.

\smallskip
\textit{Observation.}  
If a set $T$ with $|T|=r+1$ satisfies $T\cap V_a\neq\emptyset$ and $T\cap V_b\neq\emptyset$ for some $a\neq b$, then $G[T]$ is connected.  
We use this repeatedly.

\smallskip
\textit{Case 1: $S$ meets at least two partite sets.}  
Then for any $x_{j_w}\in\Supp(v)\setminus\Supp(u)$, the set $S\cup\{x_{j_w}\}$ still meets at least two parts, hence $G[S\cup\{x_{j_w}\}]$ is connected.  

\smallskip
\textit{Case 2: $S \subseteq V_1$.}  
Since $\Supp(u)$ induces a connected graph, $x_{i_p}\notin V_1$; say $x_{i_p}\in V_2$.  
If $\Supp(v)\setminus\Supp(u) \subseteq V_1$, then $\Supp(v)\subseteq V_1$, contradicting that $G[\Supp(v)]$ is connected (as $V_1$ is independent).  
Thus there exists $x_{j_w}\in(\Supp(v)\setminus\Supp(u))\cap V_k$ for some $k\neq 1$.  
Hence $S\cup\{x_{j_w}\}$ meets $V_1$ and $V_k$, so $G[S\cup\{x_{j_w}\}]$ is connected.

\medskip
In all cases, a suitable $x_{j_w}$ exists; thus $I_r(G)$ satisfies the exchange property, and hence is polymatroidal.

\vskip1mm
\noindent\textbf{(2)} 
Observe that for $n \le 4$, we have $I_r(G) = (0)$. So clearly $I_r(G)^q$ has a linear resolution.
Assume now that $n \ge 5$.
Let 
$u=x_{i_1}\cdots x_{i_{r+1}},\, v=x_{j_1}\cdots x_{j_{r+1}}\in\G(I_r(G))$
be distinct generators.  
Choose $x_{i_p}\mid u$, $x_{i_p}\nmid v$, and $x_{j_w}\mid v$, $x_{j_w}\nmid u$.  
We show that
\[
G\big[\{x_{j_w},x_{i_1},\dots,x_{i_{p-1}},x_{i_{p+1}},\dots,x_{i_{r+1}}\}\big]
\]
is connected, proving the exchange property.  
For vertices $x,y$, we write $x\nsim_{C_n} y$ to mean $\{x,y\}\notin E(C_n)$.  
Since each vertex of $C_n$ has degree $2$, write 
$N_{C_n}(x_{j_w})=\{a,b\}$.

\smallskip
\noindent\textit{Case 1.}  
$a,b\notin \{x_{i_1},\dots,x_{i_{p-1}},x_{i_{p+1}},\dots,x_{i_{r+1}}\}$.  
Then $x_{j_w}$ has no forbidden adjacency inside this set, hence the induced subgraph of $G$ is connected.

\smallskip
\noindent\textit{Case 2.}  
$a\in \{x_{i_1},\dots,x_{i_{p-1}},x_{i_{p+1}},\dots,x_{i_{r+1}}\}$ and $b\notin$ this set.  
Then $\exists\,x\in\Supp(u)\setminus\{x_{i_p}\}$ such that $x\nsim_{C_n} x_{j_w}$.

\emph{Subcase 2.1.}  
If $x\nsim_{C_n} a$, then $G[\{a,x,x_{j_w}\}]$ is connected, so the exchange holds.

\emph{Subcase 2.2.}  
If $x \sim_{C_n} a$, then since $r \ge 3$ there exists 
$z \in \Supp(u)\setminus\{a,x,x_{i_p}\}$ with $z \nsim_{C_n} a$.  
Then the induced subgraph $G[\{z,a,x_{j_w}\}]$ is connected.  
Note that $z \nsim_{C_n} x_{j_w}$; otherwise $z=b$, a contradiction.  
Hence connectivity in $G$ is preserved.

\smallskip
\noindent\textit{Case 3.}  
$a,b\in \{x_{i_1},\dots,x_{i_{p-1}},x_{i_{p+1}},\dots,x_{i_{r+1}}\}$.  
Then $\exists\,z\in\Supp(u)$ with $z\nsim_{C_n} a$ or $z\nsim_{C_n} b$.  
Assume $z\nsim_{C_n} a$; then 
$\{b,a,z,x_{j_w}\}$ induces a connected subgraph in $G$.

\smallskip
In every case, the induced subgraph containing $x_{j_w}$ and the $r$ remaining vertices of $u$ is connected in $G$, proving the exchange property.  
Thus $I_r(G)$ is polymatroidal for all $r\ge3$.

\vskip1mm
\noindent\textbf{(3) $G=\Gamma_{p,m_1,\dots,m_n}$.}
Let
$u=x_{i_1}\cdots x_{i_{r+1}},\;
v=x_{j_1}\cdots x_{j_{r+1}}\in\G(I_r(G))$
be distinct generators.  
Choose $x_{i_p}\mid u$, $x_{i_p}\nmid v$.  
We must find $x_{j_w}\mid v$  and $x_{j_w}\nmid u$ such that 
\[
x_{j_w}\,\frac{u}{x_{i_p}}\in\G(I_r(G))
\quad\Longleftrightarrow\quad
(\Supp(u)\!\setminus\!\{x_{i_p}\})\cup\{x_{j_w}\}
\text{ induces a connected subgraph of }G.
\]

We distinguish whether $x_{j_w}$ lies in the central clique $K_p$.

\smallskip
\noindent\textit{Case 1.} $x_{j_w}\in K_p$.  
Since every vertex of $K_p$ is adjacent to all vertices of $G$, connectivity of
$(\Supp(u)\!\setminus\!\{x_{i_p}\})\cup\{x_{j_w}\}$ is immediate.

\smallskip
\noindent\textit{Case 2.} $x_{j_w}\notin K_p$.  
Every connected $(r{+}1)$-set intersects $K_p$; hence $v$ contains a vertex
$y\in K_p$.  
If some $y\in K_p$ satisfies $y\mid v$ and $y\nmid u$, then take $x_{j_w}=y$ and reduce to Case~1.  
Thus assume:
\[
(\ast)\qquad K_p\cap\Supp(v)\subseteq\Supp(u).
\]
Hence $\Supp(u)$ also contains a vertex of $K_p$.  We now split according as $x_{i_p}\in K_p$.

\smallskip
\emph{Subcase 2.1.} If $x_{i_p}\notin K_p$, then since $u$ is a connected $(r+1)$-set, there exists 
$z \in \Supp(u)\cap K_p$ with $z \ne x_{i_p}$.  
Because $z$ is adjacent to every vertex of $G$, in particular to all of 
$\Supp(u)\setminus\{x_{i_p},z\}$ and to $x_{j_w}$, the induced subgraph on 
$(\Supp(u)\setminus\{x_{i_p}\}) \cup \{x_{j_w}\}$
remains connected.

\smallskip
\emph{Subcase 2.2.}If $x_{i_p}\in K_p$, then there exists $y\in K_p$ with $y \mid v$.  
By $(\ast)$ we have $y\in\Supp(u)$, and since $x_{i_p}\nmid v$, it follows that $y\ne x_{i_p}$.  
Because $y$ is adjacent to every vertex of $G$, in particular to all of 
$\Supp(u)\setminus\{x_{i_p},y\}$ and to $x_{j_w}$, the induced subgraph on
$(\Supp(u)\setminus\{x_{i_p}\}) \cup \{x_{j_w}\}$
is connected.

\smallskip
In all cases we find $x_{j_w}\mid v$ such that replacing $x_{i_p}$ by $x_{j_w}$ preserves connectivity.  
Hence the exchange property holds for $\G(I_r(G))$, and thus $I_r(G)$ is polymatroidal.
\end{proof}

We introduce next a subclass of split graphs that furnishes an additional family
for which all powers of $I_r(G)$ have linear resolutions.  Recall that a graph is
\emph{split} if its vertex set decomposes as the disjoint union of a clique and
an independent set.  The definition below records a natural restriction obtained
by controlling the adjacency between these two parts.

\begin{definition}\label{def:partially-split}
A graph $G$ is called a \emph{partially split graph} if its vertex set admits a
partition $V(G)=K\sqcup S$ such that $G[K]$ is a clique, $G[S]$ is an independent
set, and
\[
E(G)=E(G[K])\;\cup\;
\{\{x,y\}\mid x\in K',\ y\in S\}
\]
for some nonempty subset $K'\subseteq K$.
\end{definition}

\begin{theorem}\label{partially-split}
Let $G$ be a partially split graph. 
Then for every $r\ge2$ and $q\ge1$, the ideal $I_r(G)^q$ has a linear resolution.
\end{theorem}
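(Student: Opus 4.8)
The plan is to exhibit a single total order on the variables with respect to which $I_r(G)^q$ is weakly polymatroidal for every $q\ge 1$; by Kokubo--Hibi \cite{KK06} this forces a linear resolution for each power, which is exactly the assertion. Write $V(G)=K\sqcup S$ as in Definition~\ref{def:partially-split}, with $G[K]$ complete, $G[S]$ independent, and $K'\subseteq K$ the (nonempty) set of clique vertices adjacent to $S$. I would order the variables so that the vertices of $K'$ come first, then those of $K\setminus K'$, and finally those of $S$, fixing an arbitrary order inside each block. Generators of $I_r(G)^q$ are products of $q$ connected $(r+1)$-sets, so the relevant monomials have degree $q(r+1)$ but need not be squarefree, exactly as in the $r=2$ analysis of Theorem~\ref{tree-complement}.

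The combinatorial engine is the following connectivity criterion: for $W\subseteq V(G)$ with $|W|\ge 2$, the subgraph $G[W]$ is connected if and only if $W\cap S=\emptyset$ or $W\cap K'\neq\emptyset$. This is immediate, since every vertex of $K'$ is universal in $G$ (adjacent to all of $K$ because $K$ is a clique, and to all of $S$ by definition of $K'$), whereas a vertex of $W\cap S$ with $W\cap K'=\emptyset$ is isolated in $G[W]$. The force of the chosen order is that this criterion certifies every \emph{single-factor swap}: suppose $h$ is a connected $(r+1)$-set with $v_t\notin\Supp(h)$ containing a variable $v_\ell$ smaller than $v_t$, and form $h'=(\Supp(h)\setminus\{v_\ell\})\cup\{v_t\}$. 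If $v_t\in K'$ then $v_t$ is universal and $h'$ is connected; if $v_t\in K\setminus K'$ then $v_\ell\notin K'$, so either $\Supp(h)\cap K'\neq\emptyset$ survives in $h'$, or $\Supp(h)\subseteq K$ and $h'\subseteq K$; and if $v_t\in S$ then $v_\ell\in S$ (it lies below $v_t$), whence $\Supp(h)\cap S\neq\emptyset$ forces $\Supp(h)\cap K'\neq\emptyset$ by the criterion, so $h'$ is again connected. In all three cases $h'$ is a generator.

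With this in hand the weakly polymatroidal condition unwinds as follows. Take $f=\prod v_i^{a_i}$ and $g=\prod v_i^{b_i}$ in $\G(I_r(G)^q)$ agreeing on all variables larger than $v_t$ and with $a_t>b_t$, and fix a factorization $g=h_1\cdots h_q$ into connected $(r+1)$-sets. Since $\deg f=\deg g=q(r+1)$ and $a_t>b_t$ while $f$ and $g$ share their prefix, $g$ carries strictly more total mass than $f$ in the variables below $v_t$; in particular $b_t<q$, so at least one factor $h_i$ omits $v_t$, and $g$ has positive mass strictly below $v_t$. If some factor simultaneously omits $v_t$ and reaches below $v_t$, the single-factor swap of the previous paragraph produces $\ell>t$ with $v_t\,g/v_\ell\in I_r(G)^q$, and we are done.

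The main obstacle is precisely that this surplus ``small'' mass of $g$ might, a priori, be confined to the factors that already contain $v_t$, so that no factor is simultaneously available for the swap. Here a bare degree count is insufficient, and two further ingredients must be combined. First, one should not insist on preserving the given factorization of $g$: it suffices to certify $v_t\,g/v_\ell\in I_r(G)^q$, i.e. that the multiset obtained from $g$ by replacing one occurrence of a suitable $v_\ell<v_t$ by $v_t$ can be re-partitioned into $q$ connected $(r+1)$-sets. Second, this re-partition is controlled by the criterion through the rigid constraint that a generator meeting $S$ must meet $K'$; when $v_t\in S$ this lets one balance the $K'$-slots forced in $f$ by its $a_t$ copies of $v_t$ against what $g$ can house in its $b_t<a_t$ copies, and the cases $v_t\in K\setminus K'$ and $v_t\in K'$ are handled by the same bookkeeping with $S$ replaced by $\emptyset$. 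Carrying out this balancing argument to exhibit the swap is the one genuinely delicate step; once it is done, weak polymatroidality holds for every $q\ge 1$, and \cite{KK06} completes the proof.
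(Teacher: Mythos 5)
Your framework is the same as the paper's: the block order $K'<K\setminus K'<S$, the claim that $I_r(G)^q$ is weakly polymatroidal for this order, and the appeal to Kokubo--Hibi \cite{KK06}. Within that framework, the parts you actually prove are correct: the connectivity criterion ($G[W]$ is connected iff $W\cap S=\emptyset$ or $W\cap K'\neq\emptyset$) is a clean packaging of the structure, and the single-factor swap it certifies is sound, so you have correctly settled every configuration in which some factor of $g$ both omits $v_t$ and contains a variable later than $v_t$.

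However, the proposal stops exactly where the theorem's content lies. In the residual configuration---every factor of $g$ omitting $v_t$ is supported on variables preceding $v_t$, so all later variables sit inside factors already containing $v_t$---you assert that a re-partition argument ``balancing $K'$-slots'' finishes the proof, but you never produce it, and it is not routine. Concretely, take $r=2$, $K'=\{z\}$, $K\setminus K'=\{w_1,w_2,w_3\}$, $S=\{s_1,s_2\}$, $v_t=s_1$, and $g=(zs_1s_2)(w_1w_2w_3)$: the only admissible exchange variable is $s_2$, and $s_1\,g/s_2=zs_1^2w_1w_2w_3$ is \emph{not} in $I_2(G)^2$, because the two copies of $s_1$ must lie in different squarefree factors, each such factor meets $S$ and hence needs its own vertex of $K'$, but only one copy of $z$ is available. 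So no re-partition of the factors of $g$ exists at all; the weakly polymatroidal condition survives here only because no legal partner $f$ (agreeing with $g$ on $z,w_1,w_2,w_3$ and with larger exponent at $s_1$) can exist, and verifying this---or, in configurations where $f$ does exist, extracting the exchange from it---requires tracking how the $a_t$ copies of $v_t$ in $f$ each force a companion $K'$-vertex and transferring that information to $v$ through the agreement of prefixes. That is precisely the multi-factor bookkeeping carried out in the paper's Cases 1.2, 2.1(B), 2.2(B), and especially 2.3(B), where explicit three-factor exchanges are built from generators dividing $u$ as well as $v$. Since your proposal defers this step entirely, it has a genuine gap: the one case you leave open is the one the rest of the argument cannot avoid, and the naive two-factor swap provably fails there.
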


\begin{proof}
Let $V(G)=K\sqcup S$ be the partition from Definition~\ref{def:partially-split}, where
$G[K]$ is a clique, $G[S]$ is independent, and all $K$–$S$ edges are incident to a fixed nonempty proper subset $K'\subsetneq K$.  
Fix a variable order
\[
K'=\{x_1< \cdots <x_m\}<K\!\setminus\!K'=\{x_{m+1}<\cdots<x_t\}<S=\{x_{t+1}<\cdots<x_n\}
\]
and let $<_{\mathrm{lex}}$ be the induced lex order.
Let $u=\prod x_i^{a_i}$, $v=\prod x_i^{b_i}$ be distinct minimal generators of $I_r(G)^q$ with
\[
a_1=b_1,\dots,a_{t-1}=b_{t-1},\qquad a_t>b_t.
\]
Each of $u$ and $v$ is the product of $q$ generators of $I_r(G)$ (connected $(r+1)$-sets).  Write these for $v$ as
$m_1,\dots,m_q\in\mathcal{G}(I_r(G))$.

Now there exists $j>t$ with $x_j\mid v$.
Hence choose $j>t$ with $x_j\mid v$, and choose $m\mid v$ with $m\in\mathcal{G}(I_r(G))$ and $x_j\mid m$.
Since $a_t>b_t$, some generator $m'\mid v$ satisfies $x_t\nmid m'$; write
$m'=x_{j_1}\cdots x_{j_{r+1}}$.
We now treat cases based on where the variables dividing $v$ lie.

\medskip
\noindent\textit{Case 1.  $\Supp(v)\subseteq K$.}  
Thus every divisor of $v$ lies in the clique $K$.

\smallskip
\emph{(1.1)  $x_t \nmid m$.}  
Since $x_t \le x_j$, we have $x_t \in K$.  
Therefore,
\[
x_t \cdot \frac{m}{x_j} \in \mathcal{G}(I_r(G))
\qquad\Longrightarrow\qquad
x_t \cdot \frac{v}{x_j} \in (I_r(G))^q,
\]
and the exchange property follows.

\smallskip
\emph{(1.2) $x_t\mid m$.}  
Write
$m=x_{i_1}\cdots x_{i_{r+1}},~ x_{i_p}=x_t,\ x_{i_q}=x_j.$
If $x_j\mid m'$, we reduce to (1.1).  Thus assume $x_j\nmid m'$.  
Since $m\neq m'$, there exists a variable $x_{j_1}\mid m'$ with $x_{j_1}\nmid m$.  
Because $K$ is a clique,
\[
x_t x_{j_2}\cdots x_{j_{r+1}}\in\mathcal{G}(I_r(G))
\quad\text{and}\quad
x_{i_1}\cdots x_{i_{q-1}}x_{j_1}x_{i_{q+1}}\cdots x_{i_{r+1}}\in\mathcal{G}(I_r(G)),
\]
and hence
$x_t\cdot(v/x_j)\in(I_r(G))^q$.

\medskip
\noindent\textit{Case 2.} There exists $y\in S$ with $y\mid v$.  
Write $y=x_j$ and let 
$m=x_{i_1}\cdots x_{i_{r+1}}$.  
Note that $m$ and $m'$ may coincide.

\medskip
\noindent\textit{Subcase 2.1.} $x_t\in K'$.

\begin{itemize}
\item[(A)] If $x_t\nmid m$, then 
$x_t\cdot(m/x_j)\in\mathcal{G}(I_r(G))$, hence  
$x_t\cdot(v/x_j)\in(I_r(G))^q$.

\item[(B)] If $x_t\mid m$, then since $m\neq m'$ we may choose
$x_{j_1}\mid m'$ with $x_{j_1}\nmid m$.  
As $K'$ is universal in $G$,  
$x_t x_{j_2}\cdots x_{j_{r+1}}\in\mathcal{G}(I_r(G))$ and  
$x_{j_1}x_{i_1}\cdots x_t\cdots x_{i_{r+1}}\in\mathcal{G}(I_r(G))$.   
Thus $x_t(v/x_j)\in (I_r(G))^q$.
\end{itemize}

\medskip
\noindent\textit{Subcase 2.2.} $x_t\in K\setminus K'$.

\begin{itemize}
\item[(A)] If $x_t\nmid m$, choose $z\in K'$ with $z\mid m$.  
Then $x_t(m/x_j)\in\mathcal{G}(I_r(G))$ and hence  
$x_t(v/x_j)\in(I_r(G))^q$.

\item[(B)] Assume $x_t\mid m$.  
Choose $z\in K'$ dividing $m$.  
If $x_j\mid m'$, then $x_t(m'/x_j)\in \mathcal{G}(I_r(G))$.  
Thus suppose $x_j\nmid m'$.  
If some $x_{j'}>x_t$ with $x_{j'}\mid m'$, then we are done.  
Hence all variables of $m'$ lie $<x_t$. Observe that $x_j\nmid m'$ and $x_t\nmid m'$ but $x_t,x_j\mid m$.
So, there exist at least two variables  
$y_1,y_2\mid m'$ with $y_i\nmid m$.
Without loss of generality, let $x_{j_1}=y_1$, $x_{j_2}=y_2$.  
Since all $y_i\in K$

\[x_{i_1}\cdots x_{i_t}\cdots x_{i_{j-1}}y_2x_{i_{j+1}}\cdots x_{i_{r+1}},\qquad
x_{i_t}y_1 x_{j_3}\cdots x_{j_{r+1}}
\in\mathcal{G}(I_r(G))
\text{ where } x_{i_t}=x_t.\]
Hence $x_t(v/x_j)\in(I_r(G))^q$.
\end{itemize}

\medskip
\noindent\textit{Subcase 2.3.} $x_t\in S$.  
Thus $j>t$ implies $x_j\in S$.  
Since $m\in\mathcal{G}(I_r(G))$ and $S$ is independent,  
$m$ contains some $z'\in K'$ (the unique class connecting to $S$).

\begin{itemize}
\item[(A)] If $x_t\nmid m$, we immediately obtain the exchange  
$x_t(m/x_j)\in\mathcal{G}(I_r(G))$.

\item[(B)] Assume $x_t\mid m$.  
Write  
$m=x_{i_1}\cdots x_{i_t}\cdots x_{i_j}\cdots x_{i_{r+1}}$,
$x_t=x_{i_t},\ x_j=x_{i_j}.$

If $x_j \mid m'$, then by 2.3(A) we are done.  
Thus assume $x_j \nmid m'$.  
In this case,
\[
x_t \cdot \frac{m}{x_j}
    = x_{i_1}\cdots x_{i_{t-1}}\, x_t^{\,2}\, x_{i_{t+1}}\cdots x_{i_{r+1}}.
\]
If $m'$ contains some variable $x \in K'$, then we may exchange the factor $x_t$
with a variable $y \mid m'$ satisfying $y \nmid m$, yielding the desired exchange.  
Similarly, if $m'$ contains a vertex $y' \in S$, then we are again done, since in that case $m'$ must also contain some $x' \in K'$ with $x' \mid m'$.

Suppose no such $m'$ exists.  
Set
$\{m_{i_1},\dots,m_{i_t}\}
    = \{\, m\in \mathcal{G}(I_r(G)) : m \mid v \ \text{and}\ x_t \nmid m \,\}.$
By the above argument, we may assume that every variable dividing each $m_{i_s}$
lies in $K \setminus K'$.  
Now suppose that $m$ contains another vertex $z_1 \in K'$ with $z_1 \neq z'$.  
Then $x_t, z_1 \nmid m'$, and hence there exist variables $y_1, y_2$ such that  
$y_1, y_2 \mid m'$ but $y_1, y_2 \nmid m$.  
In this situation we may interchange the pair $(x_t, z')$ with $(y_1, y_2)$, yielding the desired exchange.

If no such $z_1$ exists, consider
$\{m_1',\dots,m_b'\}
    = \{\, m \in \mathcal{G}(I_r(G)) : m \mid u \ \text{and}\ x_t \mid m \,\}.$
Then there exist variables $x', y' \in K'$ such that  
$x' \mid m_i'$ and $y' \mid m_j'$ for suitable indices $i, j$.  
Hence some generator $m'' \mid v$ contains both $x'$ and $y'$, say
$m'' = x' y' x_{s_3} \cdots x_{s_{r+1}}.$
Suppose $m \neq m''$.  
Since $x_t \mid m''$ but $x_t \nmid m'$, and $x', y' \nmid m'$,  
there exist variables $w_1, w_2, w_3$ such that  
$w_k \mid m'$ but $w_k \nmid m''$ for $k = 1,2,3$.  
Choose $x_{j_y}$ with $x_{j_y} \mid m'$ and $x_{j_y} \nmid m$.

Define
\[
m_{11}
= x_{i_1}\cdots x_{i_t}\cdots z' \cdots 
   x_{i_{j-1}} x_{j_y} x_{i_{j+1}} \cdots x_{i_{r+1}},
\qquad
m_{12}
= x_{j_1} w_2 \cdots x_{j_{y-1}} x_t x_{j_{y+1}} \cdots x_{j_{r+1}}.
\]
Since $w_2 \nmid m''$ but $w_2 \mid m'$, interchanging $w_2$ and $x'$ yields
\[
m_{13}
= x_{j_1} x' \cdots x_{j_{y-1}} x_t x_{j_{y+1}} \cdots x_{j_{r+1}},
\qquad
m_{14}
= w_2 y' x_{s_3} \cdots x_{s_{r+1}}.
\]
All of $m_{11}, m_{13}, m_{14}$ belong to $\mathcal{G}(I_r(G))$, and thus
$x_t (v/x_j) \in (I_r(G))^q.$

Finally, suppose $m = m''$.  
Then $x', x_t \nmid m'$, and hence there exist variables $y_1, y_2$ such that  
$y_1, y_2 \mid m'$ but $y_1, y_2 \nmid m$.  
In this situation we may interchange the pairs $(y_1, y_2)$ and $(x_t, x')$,  
yielding the desired exchange.
\end{itemize}

In every subcase, an exchange variable is found, verifying the weakly polymatroidal condition.  
Therefore, by \cite{KK06}, each power $I_r(G)^q$ has a linear resolution.
\end{proof}

\vspace*{2mm}
\noindent
\textbf{Acknowledgments.}  
The second author acknowledges support from the Science and Engineering Research Board (SERB)
%, Grant No.~EEQ/2023/000912, 
and the National Board for Higher Mathematics (NBHM).
%, Grant No.~02011/41/2023/NBHM(RP)/R\&D-II/14756.

\vspace*{1mm}
\noindent
\textbf{Data availability statement.}  
Data sharing is not applicable to this article as no datasets were generated or analyzed during the current study.

\vspace*{1mm}
\noindent
\textbf{Conflict of interest.}  
The authors declare that they have no known competing financial interests or personal relationships that could have appeared to influence the work reported in this paper.

\bibliographystyle{abbrv}
\bibliography{refs_reg} 
\end{document}